\newtheorem{theorem}{Theorem}[section]
\newtheorem{definition}[theorem]{Definition}
\newtheorem{lemma}[theorem]{Lemma}
\newtheorem{corollary}[theorem]{Corollary}
\newtheorem{defi}[theorem]{Definition}
\newtheorem{notation}[theorem]{Notation}
\newtheorem{example}[theorem]{Example}
\newtheorem{remark}[theorem]{Remark}
\newcommand{\PreprintVersusPaper}[2]{#2}
\newcommand{\formerOmega}{o}
\newcommand{\glp}{{\ensuremath{\mathsf{GLP}}}\xspace}
\newcommand{\RC}{{\ensuremath{\mathsf{RC}}}\xspace}
\newcommand{\pa}{\ensuremath{{\mathrm{PA}}}\xspace}
\newcommand{\zfc}{\ensuremath{{\mathrm{ZFC}}}\xspace}
\newcommand{\gl}{{\ensuremath{\textup{{\sf GL}}}}\xspace}
\newcommand{\ea}{\ensuremath{{\rm{EA}}}\xspace}
\newcommand{\la}{\langle}
\newcommand{\ra}{\rangle}
\newcommand{\ord}{\mathsf{On}}
\newcommand\oOld{{\check o}}
\newcommand\oAlg{{\rm o}}
\newcommand\body[1]{b({#1})}
\newcommand\Worms{{\mathbb W}}
\def\x{{\xi}}
\def\y{\zeta}
\def\lb{\left\llbracket}
\def\rb{\right\rrbracket}
\newcommand\upAlg\Uparrow
\def\le{{\ell}}
\def\ex{e}
\def\lb{\left\llbracket}
\def\rb{\right\rrbracket}
\def\<{\left <}
\def\seq{\succcurlyeq}
\def\>{\right >}
\begin{document}

\title{Well-orders in the transfinite Japaridze algebra}

\author{David Fern\'andez-Duque,\\
Department of Mathematics,\\
 Instituto Tecnol\'ogico Aut\'onomo de M\'exico, \\
 david.fernandez@itam.mx\\
 \ \\
 Joost J. Joosten, \\
 Department of Logic, History and Philosophy of Science,\\ University of Barcelona,
 \\ jjoosten@ub.edu}  
 
\maketitle

\begin{abstract}

This paper studies the transfinite propositional provability logics $\glp_\Lambda$ and their corresponding algebras. These logics have for each ordinal $\xi< \Lambda$ a modality $\la \alpha \ra$. We will focus on the closed fragment of $\glp_\Lambda$ (i.e., where no propositional variables occur) and \emph{worms} therein. Worms are iterated consistency expressions of the form $\la \xi_n\ra \ldots \la \xi_1 \ra \top$. Beklemishev has defined well-orderings $<_\xi$ on worms whose modalities are all at least $\xi$ and presented a calculus to compute the respective order-types. 

In the current paper we present a generalization of the original $<_\xi$ orderings and provide a calculus for the corresponding generalized order-types $o_\xi$. Our calculus is based on so-called {\em hyperations} which are transfinite iterations of normal functions. 

Finally, we give two different characterizations of those sequences of ordinals which are of the form $\la {\formerOmega}_\xi (A) \ra_{\xi \in \ord}$ for some worm $A$. One of these characterizations is in terms of a second kind of transfinite iteration called {\em cohyperation.}
\end{abstract}

\section{Introduction}
%

In this paper we study transfinite propositional provability logics $\glp_\Lambda$ and their corresponding algebras. For an ordinal $\Lambda$, the transfinite provability logic $\glp_\Lambda$ is a polymodal version of G\"odel-L\"ob's provability logic \gl where for each ordinal $\alpha<\Lambda$ the logic contains a modality $[\alpha]$. 

These logics have been studied quite intensively lately and possess a very rich structure in various aspects. To mention just a few, it is a natural example of a logic that is not complete for its Kripke se\-man\-tics but is complete for its class of topological models \cite{BekBezhIcard2010, BeklemishevGabelaia:2011:TopologicalCompletenessGLP, Fernandez:2012:TopologicalCompleteness}. However, for natural topologies on intervals of ordinals the completeness for these spaces is independent of \zfc giving rise to various interesting set-theoretical questions (\cite{BagariaMagidorSakai:2013,Beklemishev:2011:OrdinalCompleteness,Blass:1990:InfinitaryCombinatorics}).

By $\glp$ we denote the class-sized logic that extends all $\glp_\Lambda$. In this paper we shall focus on $\glp^0$ --the closed fragment-- of this class-size logic. This is the fragment that does not contain any propositional variables hence is generated by $\top$, Boolean connectives and modalities only. Within $\glp^0$ we consider the class $\Worms$ of so-called {\em worms.} These are iterated consistency statements, that is, expressions of the form $\la \alpha _n \ra \ldots \la \alpha_1 \ra \top$. By $\Worms_\alpha$ we denote the class of worms where each occurring modality is at least $\alpha$.

In \cite{Beklemishev:2005:VeblenInGLP, BeklemishevFernandezJoosten:2012:LinearlyOrderedGLP} it has been shown that $\Worms_\alpha$ can be well-ordered by defining $A<_\alpha B :\Leftrightarrow \glp \vdash B \to \la \alpha \ra A$. For $A\in \Worms_\alpha$, by $\oOld_\alpha(A)$ we denote the order-type of $A$ in $\la \Worms_\alpha , <_\alpha \ra$. It is most natural to consider these well-orders as sub-structures of the algebras that correspond to $\glp$ which are often called \emph{Japaridze algebras}.

In this paper we study the ordering $<_\alpha$ as an ordering on {\em all} of $\Worms$. We will see that $<_\alpha$ no longer defines a linear order on $\Worms$; however, we prove that it does define a well-founded relation and denote the corresponding order-types by ${\formerOmega}_\alpha (A)$. We show how the ${\formerOmega}$ order-types can be recursively reduced to the $\oOld$ order-types, and in fact $o_\xi(A)=\oOld_\xi (A)$ whenever $A\in\Worms_\xi$. Based on this reduction we are able to give a calculus for the ordinal sequences $\vec{\formerOmega}(A) := \la {\formerOmega}_\xi (A)\ra_{\xi \in \ord}$. That is, we show how to compute $\vec{\formerOmega}(A)$ for a given worm $A$ and prove which ordinal sequences are attained as $\vec{\formerOmega}(A)$ for some $A$. The calculus we give is based on {\em hyperexponentials} and {\em hyperlogarithms,} which are operations on ordinals related to Veblen progressions and presented in detail in \cite{FernandezJoosten:2012:Hyperations}.

Our calculus for $o_\xi$ is different from the $\oOld_\xi$ calculus as presented in \cite{Beklemishev:2005:VeblenInGLP} in at least three essential ways. First, the definition of $o_\xi$ is a genuine generalization of $\oOld_\xi$ and $\oOld_\xi$ can be obtained a special case. Second, our presentation does not work with normal forms on worms, either in the presentation of the calculus or in any of the proofs. Finally, our calculus uses hyperexponentials  whereas the calculus in \cite{Beklemishev:2005:VeblenInGLP} used Veblen functions. 

{It is known that the sequences $\vec o(A)$ can be interpreted proof-theoretically. In particular, $\glp_\omega$ has been used to perform a $\Pi^0_1$-ordinal analysis of Peano Arithmetic (\pa) and related systems (\cite{Beklemishev:2004:ProvabilityAlgebrasAndOrdinals}).  Meanwhile, it has been shown in \cite{FernandezJoosten:2012:TuringProgressions} that there exists a close relation between Turing progressions of first-order theories and the sequences $\vec {\formerOmega} (A)$. There are ongoing efforts to carry these techniques to stronger theories using transfinite provability operators \cite{Beklemishev:2013:PositiveProvabilityLogic, Dashkov:2012:PositiveFragment, FernandezJoosten:2013:OmegaRuleInterpretationGLP, Joosten:2013:AnalysisBeyondFO} 

Furthermore, in \cite{FernandezJoosten:2012:WellOrders2}} it is discussed how the sequences $\vec {\formerOmega} (A)$ unveil important information about Kripke and other semantics for the closed fragment of $\glp_\Lambda$ as presented in \cite{FernandezJoosten:2012:KripkeSemanticsGLP0, FernandezJoosten:2012:ModelsOfGLP}.

\paragraph{Layout.} Section \ref{SecPrelim} introduces the logics $\glp_\Lambda$ and their fragments. In Sections \ref{section:wormsAndOrdinals} and \ref{section:wormsAndOrdertypes} we present the linear orders $<_\alpha$ and their corresponding order-types $o_\xi$ on substructures of the Japaridze algebra which are a central focus of this paper. We show how the computation of $o_\xi$ can be reduced to the computation of $o_0$.

In Section \ref{section:CalculusForO}, we give a calculus to compute $o_0$. The calculus that we present is actually a reduction to what we call \emph{worm enumerators} $\sigma^\xi$. It is in Section \ref{section:ComputingHyperexponentials} where we provide a calculus to compute the worm enumerators $\sigma^\xi$. Section \ref{section:OrdinalArithmetic} reviews the notions of {\em hyperations} and {\em cohyperations} from (\cite{FernandezJoosten:2012:Hyperations}) to show how the worm enumerators $\sigma^\xi$ are hyperations of ordinal exponentiation. Finally, in Section \ref{section:consistencySequences} we set the hyperations and cohyperations to work to obtain full characterizations of the sequences $\la o_\xi (A) \ra_{\xi \in {\sf On}}$.

The current paper is based on material which originally appeared in the unpublished manuscripts \cite{FernandezJoosten:2012:WellOrders:Version1} and \cite{FernandezJoosten:2012:WellOrders2}. Portions of the latter were reported in \cite{FernandezJoosten:2012:TuringProgressions}.

\section{Provability logics and the Reflection Calculus}\label{SecPrelim}


In this section we introduce the logics $\glp_\Lambda$ and its fragments, as well as fixing some notation.

\subsection{The logics $\glp_{\Lambda}$}
The language of $\glp_{\Lambda}$ is that of propositional modal logic that contains for each $\alpha < \Lambda$ a unary modal operator $[\alpha]$.
In the definition below the $\alpha$ and $\beta$ range over ordinals and the $\psi$ and $\chi$ over formulas in the language of $\glp_{\Lambda}$. 

\begin{definition}
For $\Lambda$ an ordinal, the logic $\glp_{\Lambda}$ is the propositional normal modal logic that has for each $\alpha < \Lambda$ a modality $[\alpha ]$ and is axiomatized by all propositional logical tautologies together witht the following schemata:
\[
\begin{array}{ll}
[\alpha] (\chi \to \psi) \to ([\alpha] \chi \to  [\alpha]\psi), & \\
{}[ \alpha ] ([\alpha] \chi \to \chi) \to [\alpha] \chi, &\\
\langle \alpha \rangle \psi \to [\beta] \langle \alpha \rangle \psi & \mbox{for  $\alpha < \beta$,}\\
{}[\alpha] \psi \to [\beta] \psi &\mbox{for  $\alpha \leq \beta$}. 
\end{array}
\]
The rules of inference are Modus Ponens and necessitation for each modality: $\frac{\psi}{[\alpha]\psi}$.
By \glp we denote the class-size logic that has a modality $[\alpha]$ for each ordinal $\alpha$ and all the corresponding axioms and rules. The classic G\"odel-L\"ob provability logic \gl is denoted by $\glp_1$.
\end{definition}

\subsection{Japaridze algebras}

The relations $<_\alpha$ do not give proper linear orders on $\Worms_\alpha$, given that different worms may be equivalent in \glp and hence undistinguishable in the ordering. We remedy this by passing to the Lindenbaum algebra of $\mathsf{GLP}$ -- that is, the quotient of the language of $\mathsf{GLP}$ modulo provable equivalence.

This algebra is a {\em Japaridze algebra}, as described below:
\begin{definition}[Japaridze algebra]
A {\em Japaridze algebra} is a structure \[\mathcal J=\langle D,\{[ \alpha]\}_{\alpha<\Lambda},\wedge,\neg,0,1\rangle\]
such that
\begin{enumerate}
\item $\langle D,\wedge,\neg,0,1\rangle$ is a Boolean algebra,
\item $[\alpha]1=1$ for all $\alpha<\Lambda$,
\item $[\alpha](x\to y)\leq [\alpha]x\to[\alpha]y$ for all $\alpha<\Lambda$, $x,y\in D$,
\item $[\alpha]([\alpha]x\to x)\leq [\alpha]x$ for all $\alpha<\Lambda$, $x\in D$,
\item $[\alpha] x\leq [\beta] x$ for all $\alpha\leq\beta<\Lambda$, $x\in D$ and,
\item $\langle\alpha\rangle x\leq [\beta]\langle \alpha\rangle x$ for all $\alpha<\beta<\Lambda$, $x\in D$,
\end{enumerate}
where $\langle\alpha\rangle,\to$ are defined in the usual way.
\end{definition}

It is in these algebras that the partial orders $<_\alpha$ we have described naturally reside. However, as we are mainly  interested in formulas that fall within a specific fragment of our language, we will work throughout the paper in a restricted calculus.

\subsection{The Reflection Calculus}

In \cite{Dashkov:2012:PositiveFragment, Beklemishev:2013:PositiveProvabilityLogic, Beklemishev:2012:CalibratingProvabilityLogic} Dashkov and Beklemishev introduced a calculus for reasoning about a fragment of the language of $\glp$ and called it the {\em Reflection Calculus} (\RC). (Closed) formulas of \RC are built from the grammar
\[\top \ | \ \phi\wedge\psi \ | \ \lambda\phi,\]
where $\lambda$ is an ordinal and $\phi,\psi$ are formulas of \RC; $\lambda\phi$ is interpreted as $\langle\lambda\rangle\phi$, but as \RC does not contain operators of the form $[\lambda]$, the brackets become unnecessary. \RC derives {\em sequents} of the form $\phi\vdash\psi$, given by the following rules and axioms:
\[\begin{array}{lcc}
\phi\vdash\phi&\phi\vdash\top&\dfrac{\phi\vdash\psi \ \ \psi\vdash \chi}{\phi\vdash\chi}\\\\
\phi\wedge\psi\vdash\phi&\phi\wedge\psi\vdash\psi
&\dfrac{\phi\vdash\psi \ \ \phi\vdash \chi}{\phi\vdash\psi\wedge\chi}\\\\
\alpha\alpha\phi\vdash\alpha\phi&\dfrac{\phi\vdash \psi}{\alpha\phi\vdash\alpha\psi}&\\\\
\beta\phi\vdash\alpha\phi \ \ & \beta\phi\wedge\alpha\psi\vdash \beta(\phi\wedge\alpha\psi)&\text{for $\alpha<\beta$.}
\end{array}
\]

In the context of \glp we shall sometimes denote $\glp \vdash \phi \to \psi$ by $\phi \vdash \psi$. The following is proven in \cite{Beklemishev:2013:PositiveProvabilityLogic}:

\begin{theorem}
$\glp$ is a conservative extension of \RC.
\end{theorem}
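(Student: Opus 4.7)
The theorem has two directions. The \emph{soundness} of \RC in \glp (under the translation $\lambda\phi\mapsto\langle\lambda\rangle\phi$) is routine: the transitivity schema $\alpha\alpha\phi\vdash\alpha\phi$ is the dual of L\"ob's axiom, modal monotonicity $\phi\vdash\psi\Rightarrow\alpha\phi\vdash\alpha\psi$ is necessitation plus K, $\beta\phi\vdash\alpha\phi$ for $\alpha<\beta$ dualizes axiom~5 of the Japaridze presentation, and the interaction axiom $\beta\phi\wedge\alpha\psi\vdash\beta(\phi\wedge\alpha\psi)$ follows from axiom~6 ($\langle\alpha\rangle\psi\to[\beta]\langle\alpha\rangle\psi$) combined with K-distribution for $[\beta]$ and Boolean reasoning.

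The substantial direction is conservativity: $\glp\vdash\phi\to\psi$ with $\phi,\psi$ in the language of \RC implies $\phi\vdash\psi$ in \RC. My plan is semantic. I would introduce a class $\mathcal C$ of finite multi-relational Kripke frames $\mathfrak F=\langle W,\{R_\alpha\}\rangle$ in which each $R_\alpha$ is transitive and conversely well-founded, $R_\beta\subseteq R_\alpha$ for $\alpha<\beta$, and $R_\alpha$-successors are preserved along $R_\beta$-edges, i.e., $xR_\alpha z\wedge xR_\beta y\Rightarrow yR_\alpha z$ whenever $\alpha<\beta$ (which validates axiom~6). Under the obvious positive reading, one checks by direct inspection that each Japaridze axiom is valid on $\mathcal C$, so every \glp-theorem is too. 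It then suffices to prove completeness of \RC with respect to $\mathcal C$: given $\phi\not\vdash\psi$ in \RC, construct a finite frame in $\mathcal C$ with a world where $\phi$ holds and $\psi$ fails. I would do this by a canonical-model-style construction whose worlds are finite, $\vdash$-closed sets of \RC-formulas drawn from the subformulas of $\phi,\psi$ together with their modal consequences, declaring $wR_\alpha v$ when the modal content at $\alpha$ in $w$ forces all formulas in $v$. Finiteness follows from the finite signature of $\phi,\psi$ combined with the well-foundedness forced by the L\"ob-style axioms.

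The main obstacle lies in organizing the canonical construction so that the relations $R_\alpha$ simultaneously meet the frame conditions of $\mathcal C$ and admit a truth-lemma for positive formulas. The delicate point is the interaction axiom: when a world $w$ forces both $\beta\chi$ and several $\alpha\chi_i$ with $\alpha<\beta$, its witnessing $R_\beta$-successor must itself realize each $\alpha\chi_i$, so successors have to be built ``greedily'' by accumulating all lower-modality commitments of the source into the target. This is precisely the syntactic content of the interaction axiom, and the proof naturally pairs this absorption step with the stratification property of $\mathcal C$. A reassuring point is that although \glp famously fails to be Kripke-complete in its full language, conservativity requires only \emph{soundness} of \glp over $\mathcal C$, so the classical obstruction does not arise here.
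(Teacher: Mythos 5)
Note first that the paper does not prove this theorem itself; it cites it from Beklemishev (and the result is due to Dashkov and Beklemishev), so the comparison below is with that cited proof.

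Your soundness direction and the overall plan (route through a class of finite Kripke frames, prove $\glp$ sound over it, prove $\RC$ complete over it) match the strategy in the literature. But there is a fatal problem with the specific frame class $\mathcal C$ you define: it is degenerate. Your three conditions jointly force $R_\beta=\varnothing$ for every $\beta>0$. Indeed, suppose $x\,R_\beta\,y$ with $\beta>0$. By your inclusion condition $R_\beta\subseteq R_0$, also $x\,R_0\,y$. Now apply your interaction condition with $\alpha=0$ and $z=y$: from $x\,R_0\,y$ and $x\,R_\beta\,y$ you get $y\,R_0\,y$, contradicting irreflexivity of $R_0$ (which follows from transitivity plus converse well-foundedness). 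Hence every frame in $\mathcal C$ has all $R_\beta$ empty for $\beta\geq 1$. On such frames no world can satisfy $\langle 1\rangle\top$, so for instance $1\top\models_{\mathcal C}11\top$ holds vacuously, while certainly $1\top\nvdash_{\RC}11\top$ (its translation $\langle 1\rangle\top\to\langle 1\rangle\langle 1\rangle\top$ already fails in $\gl$). Thus $\RC$ cannot be complete over $\mathcal C$, and the canonical construction you envision cannot succeed for this class no matter how the successor worlds are assembled.

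Your ``reassuring point'' at the end actually inverts the difficulty. The famous obstruction is not merely that $\glp$ lacks Kripke completeness; it is precisely that the class of Kripke frames validating all of $\glp$'s schemata collapses in the way computed above. That collapse is harmless for soundness of $\glp$ (trivially preserved on an empty class of relations), but it is exactly what blocks the completeness half of your argument. The cited proofs get around this by \emph{not} imposing the full $\glp$-frame conditions. Dashkov's finite ``RC-frames'' replace the inclusion $R_\beta\subseteq R_\alpha$ by a weaker simulation-style condition tailored to the positive (diamond-only) language, so that the monotonicity and interaction sequents remain valid but the collapse argument no longer applies; the soundness of the full $\glp$ over such models for $\RC$-sequents is then obtained by a separate argument (via p-morphisms into a universal model / $\glp$-space). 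Your proposal would need to replace condition~(ii) with such a relaxed condition and then revisit both the soundness and the truth-lemma arguments, which is where the real work of this theorem lies.
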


This result implies that any reasoning carried out within $\glp$ can, in principle, be carried out within \RC, and we shall use this calculus in all formal reasoning in this paper. As such will write $\lambda\phi$ instead of $\langle\lambda\rangle\phi$, unless the brackets are needed for legibility. We will merely write $\phi\vdash\psi$ to mean ``$\phi\vdash\psi$ is a theorem of $\RC$'', and for formulas of \RC, we will write $\phi\equiv\psi$ if $\phi\vdash \psi$ and $\psi\vdash\phi$. The equivalence class of $\phi$ under $\equiv$ will be denoted $\overline \phi$. For a set of formulas $\Phi$, we denote by $\overline{\Phi}$ the set of equivalence classes of its elements. 

\subsection{Worms and the closed fragment}

A closed formula in the language of \glp is simply a formula without propositional variables. In other words, closed formulas are generated by just $\top$ and the Boolean and modal operators. 

The closed fragment of \glp is the class of closed formulas provable in \glp and is denoted by $\glp^0$.
Within this closed fragment and the corresponding algebra, there is a particular class of privileged inhabitants/terms which are called \emph{worms}. Worms are nothing more than iterated consistency statements.

\begin{definition}[Worms, $\Worms$, $\Worms_\alpha$]
By $\Worms$ we denote the class of \emph{worms} of \glp which is inductively defined as $\top \in \Worms$ and $A\in \Worms \Rightarrow \langle \alpha \rangle A \in \Worms$. Note that every worm belongs to the language of \RC.

Similarly, we inductively define for each ordinal $\alpha$ the class of worms $\Worms_{\alpha}$ where all ordinals are at least $\alpha$ as $\top \in \Worms_{\alpha}$ and $A\in \Worms_{\alpha} \wedge \beta \geq \alpha \Rightarrow \langle \beta \rangle A \in \Worms_\alpha$.
\end{definition}

Both the closed fragment of \glp and the set of worms have been studied  in \cite{Beklemishev:2005:VeblenInGLP} and \cite{BeklemishevFernandezJoosten:2012:LinearlyOrderedGLP}. 
Worms can be conceived as the backbone of $\glp^0$ and obtain their name from the heroic worm-battle, a variant of the Hydra battle (see \cite{Beklemishev:2006}). 

\begin{notation}
We reserve lower-case letters at the beginning of the Greek alphabet $\alpha, \beta, \gamma, \hdots$ for variables ranging over ordinals. Also $\xi$ and $\zeta$ will denote ordinals. Worms will be denoted by upper case latin letters $A, B, C,  \hdots$. The Greek lower-case letters $\phi, \psi, \chi, \hdots$ will denote formulas. However, $\varphi$ shall be reserved for the Veblen enumeration function. Likewise, we reserve $\omega$ to denote the first infinite ordinal.
\end{notation}

By $|A|$, the length of a worm $A$, we shall mean the number of modalities occurring in $A$: $|\top| = 0$, and $|\la \xi \ra A| = |A| +1$. For $A$ a worm and $n$ a natural number we define the $n$-times concatenation of $A$ --denoted by $A^n$-- as usual: $A^0 = \top$ and $A^{n+1} =  A  A^n$. We will denote concatenation of worms by juxtaposition, defined recursively so that $\top A=A$ and $(\xi B) A=\xi(BA)$.


\section{Ordering worms}\label{section:wormsAndOrdinals}

In this section we define various natural ordering on worms and see how these orderings are related to each other.

\subsection{Worms and consistency orderings}

It is a fact of experience that natural mathematical theories can be linearly ordered in terms of consistency strength. Something similar holds for worms which motivates the next definition. 

\begin{definition}[$<, <_{\xi}$]
We define a relation $<_{\xi}$ on $\Worms \times \Worms$ by 
\[
A <_{\xi} B \ :\Leftrightarrow \  B \vdash \langle \xi \rangle A .
\]
Instead of $<_0$ we shall simply write $<$.
\end{definition}

We shall sometimes refer to the orderings $<_\xi$ as the \emph{$\xi$-consistency orderings}, and these orderings and their corresponding order-types are the main theme of this paper.
It is known (\cite{Beklemishev:2005:VeblenInGLP, BeklemishevFernandezJoosten:2012:LinearlyOrderedGLP}) that the class of worms is linearly ordered by $<_0$; that is, if $A,B$ are worms then either $A<_0 B$, $A\equiv B$ or $B <_0 A$.

Recall that $\overline \Worms_\xi$ denotes the class of worms in $\Worms_\xi$ modulo \glp-provable equivalence. Clearly, $<_\xi$ is well-defined on any of the $\overline \Worms_\zeta$ by $\overline A <_\xi \overline B \Leftrightarrow A <_\xi B$ whence we shall use the same symbol $<_\xi$ for both relations. The following theorem is proven in (\cite{Beklemishev:2005:VeblenInGLP, BeklemishevFernandezJoosten:2012:LinearlyOrderedGLP}).

\begin{theorem}\label{theorem:wormsIsomorphicToOrdinals}
For each ordinal $\xi$, we have  that $\la \overline \Worms_\xi, <_\xi \ra$ is isomorphic to the class of all ordinals.
\end{theorem}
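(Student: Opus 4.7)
The plan is to verify that $(\overline{\Worms}_\xi,<_\xi)$ is a strict well-order unbounded in length, i.e.\ it satisfies: (i) $<_\xi$ is irreflexive and transitive; (ii) any two equivalence classes are $<_\xi$-comparable; (iii) there is no infinite descending $<_\xi$-chain; (iv) the order has no maximum and its proper initial segments are sets. Once (i)--(iv) are in hand, the assignment $\overline A\mapsto \text{order-type of }\{\overline B:\overline B<_\xi\overline A\}$ is the desired isomorphism with $\ord$.

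Step (i) is essentially immediate from the axioms of \RC. Transitivity uses $\xi\xi\phi\vdash\xi\phi$: from $B\vdash\xi A$ we get $\xi B\vdash\xi\xi A\vdash\xi A$, so that $C\vdash\xi B$ gives $C\vdash\xi A$. Irreflexivity is Löb's principle: were $A\vdash\xi A$ with $A\not\equiv\bot$, the Löb axiom (or its positive \RC-analogue) would force $A\equiv\bot$, contradicting the consistency of worms, which can be witnessed by any of the known sound semantics (arithmetical, topological, or Kripke) for $\glp$.

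Step (ii) is the main combinatorial content, and I expect it to be the principal obstacle. Following Beklemishev, I would first show that every $A\in\Worms_\xi$ is \RC-equivalent to a canonical form obtained by iterated use of the clustering axiom $\beta\phi\wedge\alpha\psi\vdash\beta(\phi\wedge\alpha\psi)$ (for $\alpha<\beta$) together with monotonicity $\beta\phi\vdash\alpha\phi$. Using such a normal form, any non-trivial $A\in\Worms_\xi$ decomposes uniquely, modulo $\equiv$, as $\xi A_1\,\xi A_2\cdots\xi A_k$ with each $A_i\in\Worms_{\xi+1}$. The comparison of two worms in $\overline\Worms_\xi$ then reduces, by induction on the total modality count, to lexicographic comparison of their constituent blocks in $\overline\Worms_{\xi+1}$, a strictly simpler case because modalities have strictly increased. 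The delicacy here lies in making the normal form genuinely canonical: one needs the clustering-and-monotonicity rewrites to terminate in a unique representative, and one needs the resulting lexicographic comparison to be provable in \RC using exactly the available axioms.

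For step (iii), the normal-form analysis of step (ii) produces a natural rank function $\rho:\overline\Worms_\xi\to\ord$ respecting $<_\xi$, from which well-foundedness follows; alternatively one may appeal to arithmetic soundness, under which an infinite $<_\xi$-descent would project to an impossible infinite descending chain of consistency strengths of a recursively axiomatised theory. Step (iv) is easy: $A<_\xi\xi A$ always (since $\xi A\vdash\xi A$), so there is no maximum, and any proper initial segment is bounded in worm length (using (iii)) and hence a set. Assembling (i)--(iv) gives the isomorphism with $\ord$; the hard part, as noted, is the normal-form argument underpinning linearity.
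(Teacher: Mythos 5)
Note first that the paper does not itself prove this theorem: it is imported from \cite{Beklemishev:2005:VeblenInGLP,BeklemishevFernandezJoosten:2012:LinearlyOrderedGLP}, where linearity of $<_\xi$ on $\overline\Worms_\xi$ is the principal content, so there is no in-paper proof to compare against. Your decomposition into (i)--(iv) correctly isolates the ingredients, and your sketch of (ii) --- a canonical block decomposition $\xi A_1\xi A_2\cdots\xi A_k$ with $A_i\in\Worms_{\xi+1}$, followed by lexicographic comparison under a simultaneous induction on the number of modalities --- is indeed the shape of Beklemishev's normal-form argument. But this remains only a sketch at exactly the step you yourself call the principal obstacle. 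One still has to prove that the rewriting by clustering and monotonicity terminates in a genuinely unique representative, that the block decomposition is invariant under $\equiv$, and that $A<_\xi B$ coincides with the lexicographic order on the block sequences, all verifiably inside \RC. Without these, the proof stops short precisely where the theorem has its teeth.

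There is also a concrete error in step (iv): proper initial segments of $\langle\overline\Worms_\xi,<_\xi\rangle$ are \emph{not} bounded in worm length. Already the worms $0^n$, of length $n$, all lie strictly $<_0$-below the single-modality worm $\langle 1\rangle\top$ (since $\langle 1\rangle\top\vdash 0^{n+1}\top$ by iterating monotonicity and the clustering axiom), so the initial segment below $\langle 1\rangle\top$ contains worms of every finite length. What is true, and needs its own short argument, is that any $B<_\xi A$ is $\equiv$-equivalent to a worm whose modalities are bounded by the maximum modality occurring in $A$; this is what makes the initial segment a set. Finally, your irreflexivity argument in (i) mixes L\"ob-style reasoning native to the full $\glp$-language (where $\bot$ is available) with the $\bot$-free positive fragment \RC; the cleanest fix, which you also mention, is to invoke a sound semantics for $\glp$ in which every worm is satisfiable, from which $A\vdash\langle\xi\rangle A$ directly yields a contradiction in that model.
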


As a consequence we see that $<_\xi$ is irreflexive on $\Worms$. For, suppose that $A<_\xi A$ for some $A\in \Worms$, then $A\vdash \xi A \vdash 0 A$ contradicting the irreflexivity of $<_0$ on $\Worms_0 (= \Worms)$.

The existence of a minimal element and the fact that each element has a direct $<_\xi$ successor in $\la \overline \Worms_\xi, <_\xi \ra$ are reflected in the following easy lemma.

\begin{lemma}\label{theorem:ordersHaveMinimalElementAndAreDiscrete}\ 
\begin{enumerate}
\item
$\top$ is a $<_\xi$-minimal element; \label{theorem:ordersHaveMinimalElementAndAreDiscrete:minimalElement}
\item
For no worms $A,B$ do we have $A <_\xi B <_\xi \xi A$.\label{theorem:ordersHaveMinimalElementAndAreDiscrete:discrete}
\end{enumerate}
\end{lemma}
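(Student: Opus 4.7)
My plan is to dispatch each clause by contradiction, using the monotonicity axiom $\beta\phi\vdash\alpha\phi$ (for $\alpha\leq\beta$) of \RC to reduce questions about $<_\xi$ to questions about $<_0$, at which point Theorem~\ref{theorem:wormsIsomorphicToOrdinals} applied with $\xi=0$ gives me that $<_0$ is a well-order on $\overline\Worms_0=\overline\Worms$ with least element $\overline\top$.

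For clause~(\ref{theorem:ordersHaveMinimalElementAndAreDiscrete:minimalElement}), I will suppose toward a contradiction that some worm $A$ satisfies $A<_\xi\top$. Unwinding the definition gives $\top\vdash\xi A$, hence $\glp$ proves $\xi A$ outright. Applying the monotonicity axiom at $0\leq\xi$ then yields $\top\vdash 0A$, which says $A<_0\top$. This contradicts the minimality of $\top$ in the $<_0$ order noted above.

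For clause~(\ref{theorem:ordersHaveMinimalElementAndAreDiscrete:discrete}), suppose some worms $A,B$ satisfy $A<_\xi B<_\xi\xi A$. The first inequality gives $B\vdash\xi A$ and the second gives $\xi A\vdash\xi B$. Chaining these derivations produces $B\vdash\xi B$, that is, $B<_\xi B$, contradicting the irreflexivity of $<_\xi$ on $\Worms$ observed immediately before the lemma. I do not expect any genuine obstacle: both clauses are short contradiction arguments, and the only subtlety worth flagging is that $A$ and $B$ need not lie in $\Worms_\xi$, which is why the detour through $<_0$ via the monotonicity axiom—rather than a direct appeal to Theorem~\ref{theorem:wormsIsomorphicToOrdinals} at level $\xi$—is the natural route.
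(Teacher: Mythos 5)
Your clause~(\ref{theorem:ordersHaveMinimalElementAndAreDiscrete:discrete}) argument is correct and in fact a hair tighter than the paper's: you chain $B\vdash\xi A\vdash\xi B$ directly to get $B<_\xi B$, whereas the paper applies the modal rule once more to obtain $\xi A\vdash\xi B\vdash\xi\xi A$ and concludes $\xi A<_\xi\xi A$; both contradict the irreflexivity of $<_\xi$ on $\Worms$ noted just before the lemma.

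Clause~(\ref{theorem:ordersHaveMinimalElementAndAreDiscrete:minimalElement}) has a genuine, if small, gap. After reducing $A<_\xi\top$ to $A<_0\top$ via the monotonicity axiom, you appeal to the claim that Theorem~\ref{theorem:wormsIsomorphicToOrdinals} gives $\overline\top$ as the $<_0$-least element. That theorem only says $\langle\overline\Worms,<_0\rangle$ is a well-order; it does not identify which equivalence class is least, and ``\,$\top$ is $<_0$-minimal\,'' is exactly the $\xi=0$ instance of the clause you are trying to prove, so citing it without independent justification is circular. The missing step is short: from $\top\vdash 0A$ and $A\vdash\top$ one gets $\top\vdash 0A\vdash 0\top$, i.e.\ $\top<_0\top$, contradicting irreflexivity of $<_0$. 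But once you supply that, the detour through $<_0$ becomes superfluous — the paper runs the same two-line argument directly at level $\xi$ ($\top\vdash\xi A\vdash\xi\top$, hence $\top<_\xi\top$), and since irreflexivity of $<_\xi$ was established on all of $\Worms$ (not merely $\Worms_\xi$) immediately before the lemma, your concern that $A$ might not lie in $\Worms_\xi$ is already covered and the monotonicity reduction buys you nothing.
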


\begin{proof}
For the first item, suppose that for some $A$ we have $A<_\xi \top$, then $\top \vdash \la \xi \ra A \vdash \la \xi \ra \top$ whence $\top <_\xi \top$ which contradicts the irreflexivity of $<_\xi$.

For the second, suppose towards a contradiction that there were such a $B$. Then $\xi A \vdash \xi B \vdash \xi \xi A$ whence $\xi A <_\xi \xi A$ which once again contradicts the irreflexivity of $<_\xi$.
\end{proof}

The orderings $<_\alpha$ for any ordinal $\alpha>0$ are not linear on $\overline \Worms$. For example, $1$ and $101$ are $\alpha$ incompatible for $\alpha >0$: Suppose $101 \vdash \alpha 1$, then $101 \vdash 11 \vdash 11 \wedge 01 \vdash 1101 \vdash 0101$, i.e. $101 < 101$ which contradicts the irreflexivity of $<$. Likewise $1\vdash \alpha 101 \vdash 0101 \vdash 01$ yields a contradiction. Also $1\equiv 101$ contradicts reflexivity since $1\vdash 101 \vdash 001 \vdash 01$. Similarly, it is easy to construct infinite anti-chains --see \cite{FernandezJoosten:2012:WellOrders2} for examples-- for $<_\x$ when $\x > 0$, hence the $<_\xi$ orderings do not define a well-quasiorder on $\Worms$. 

The next two lemmata are folklore and follow easily from the axioms of \glp. They shall be used repeatedly often without explicit mention in the remainder of this paper.

\begin{lemma}\label{lemma:basicLemma}\ \\
\vspace{-0.4cm}
\begin{enumerate}
\item\label{lemma:basicLemma1}
Given formulas $\phi$ and $\psi$, if $\beta < \alpha$, then
$(\alpha  \phi \wedge  \beta  \psi) \equiv  \alpha (\phi \wedge  \beta  \psi)$;

\item\label{lemma:basicLemma2}
If $A\in \Worms_{\alpha+1}$, then $A \wedge \langle \alpha \rangle B \equiv A\alpha B$;

\end{enumerate}
\end{lemma}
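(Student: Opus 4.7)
The plan is to prove item (1) directly from the axioms of \RC{} and to derive item (2) from item (1) by induction on the length of $A$.

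For item (1), the forward direction $\alpha\phi\wedge\beta\psi\vdash\alpha(\phi\wedge\beta\psi)$ is precisely the last family of \RC{} axioms under a relabelling of the ordinals (the axiom as stated in the excerpt reads $\beta\phi\wedge\alpha\psi\vdash\beta(\phi\wedge\alpha\psi)$ for $\alpha<\beta$; swapping the names of $\alpha$ and $\beta$ and of $\phi$ and $\psi$ gives exactly what we want). For the backward direction, $\wedge$-elimination gives $\phi\wedge\beta\psi\vdash\phi$ and $\phi\wedge\beta\psi\vdash\beta\psi$; monotonicity of the modality $\alpha$ then yields
\[
\alpha(\phi\wedge\beta\psi)\vdash\alpha\phi \qquad \text{and} \qquad \alpha(\phi\wedge\beta\psi)\vdash\alpha\beta\psi.
\]
Since $\beta<\alpha$, the axiom $\alpha\chi\vdash\beta\chi$ applied with $\chi=\beta\psi$ gives $\alpha\beta\psi\vdash\beta\beta\psi$, which the 4-axiom collapses to $\beta\psi$. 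Conjoining the two consequences delivers $\alpha(\phi\wedge\beta\psi)\vdash\alpha\phi\wedge\beta\psi$, as required.

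For item (2), I would induct on $|A|$ with $A\in\Worms_{\alpha+1}$. The base case $A=\top$ is immediate from $\top\wedge\alpha B\equiv \alpha B$ together with the convention $\top\alpha B=\alpha B$. For the inductive step, write $A=\gamma A'$ with $\gamma\geq\alpha+1$ and $A'\in\Worms_{\alpha+1}$. Since $\alpha<\gamma$, item (1) applied with $\alpha\mapsto\gamma$ and $\beta\mapsto\alpha$ yields $\gamma A'\wedge\alpha B\equiv\gamma(A'\wedge\alpha B)$. The induction hypothesis gives $A'\wedge\alpha B\equiv A'\alpha B$, and monotonicity of $\gamma$ propagates this equivalence under the outer modality, producing $\gamma(A'\alpha B)=(\gamma A')\alpha B=A\alpha B$.

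I do not expect a genuine obstacle: item (1) is a short rearrangement of the \RC{} axioms and item (2) is a one-line induction on top of it. The only step requiring a modest amount of care is $\alpha\beta\psi\vdash\beta\psi$ in the backward half of item (1), where one must feed the monotonicity-of-modalities axiom the compound formula $\beta\psi$ and then appeal to the 4-axiom, rather than trying to pass from $\alpha\beta\psi$ to $\beta\psi$ in a single step.
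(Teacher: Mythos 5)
Your proof is correct and matches the paper's approach exactly: the forward direction of item (1) is the \RC{} polytransitivity axiom, the backward direction hinges on $\alpha\beta\psi\vdash\beta\beta\psi\vdash\beta\psi$, and item (2) follows by the same induction on $|A|$. You have simply spelled out the details that the paper leaves implicit (deferring them to a reference).
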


\proof
The left-to-right direction of the first item is an axiom of \RC. For the other direction we observe that $ \alpha   \beta \psi \vdash  \beta \psi$ in virtue of axioms $\alpha  \beta  \psi \vdash  \beta \beta \psi$ and $\beta\beta\psi\to\beta\psi$. The second item follows directly from the first by induction on $|A|$. For more details, we refer to \cite{BeklemishevFernandezJoosten:2012:LinearlyOrderedGLP}.
\qed

The next lemma tells us that in various occasions we are allowed to substitute equivalent parts into worms.

\begin{lemma}\label{theorem:equivalenceLemma}\ \\
\vspace{-0.4cm}
\begin{enumerate}
\item\label{theorem:equivalenceLemma1}
If $A,B \in \Worms_{\alpha+1}$ and $A\equiv B$, then for any worm $C$ we have $A\alpha C \equiv B \alpha C$;

\item\label{theorem:equivalenceLemma2}
If $A,B,C \in \Worms$ and $B\equiv C$, then $AB \equiv AC$;

\item\label{theorem:equivalenceLemma3}
More generally, if $A,B,C \in \Worms$ and $B\vdash C$, then $AB \vdash AC$;

\item\label{theorem:equivalenceLemma4}
For $A,B \in \Worms$ we have $AB\vdash A.$

\end{enumerate}
\end{lemma}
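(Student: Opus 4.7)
My plan is to prove the four items in an order that lets them feed into each other. Item~3 is the real workhorse, so I would establish it first, derive item~2 as an immediate corollary, deduce item~1 directly from Lemma~\ref{lemma:basicLemma}, and dispatch item~4 by the same induction template as item~3.

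For item~3, I would induct on $|A|$, using the recursive definition of concatenation from the paper ($\top A = A$ and $(\xi A')A = \xi(A' A)$). The base case $A = \top$ is immediate because $AB = B \vdash C = AC$ by hypothesis. For the inductive step, write $A = \xi A'$, so that $AB = \xi(A'B)$ and $AC = \xi(A' C)$. The induction hypothesis gives $A'B \vdash A'C$, and applying the \RC rule $\phi \vdash \psi \,/\, \xi\phi \vdash \xi\psi$ yields $\xi(A'B) \vdash \xi(A'C)$, as required. Item~2 then follows by applying item~3 in both directions to the hypothesis $B \equiv C$.

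Item~1 is a short calculation rather than an induction. Since $A,B \in \Worms_{\alpha+1}$, part~\ref{lemma:basicLemma2} of Lemma~\ref{lemma:basicLemma} gives $A\alpha C \equiv A \wedge \alpha C$ and $B\alpha C \equiv B \wedge \alpha C$. From $A \equiv B$ we get $A \wedge \alpha C \equiv B \wedge \alpha C$ by Boolean reasoning in \RC, and composing the three equivalences yields $A\alpha C \equiv B\alpha C$.

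Item~4 uses the same induction-on-$|A|$ pattern as item~3. The base $A = \top$ is trivial since $AB = B \vdash \top$ is the axiom $\phi \vdash \top$. For $A = \xi A'$, the induction hypothesis $A'B \vdash A'$ and the monotonicity rule give $\xi(A'B) \vdash \xi A'$, i.e.\ $AB \vdash A$. There is no real obstacle in any of this; the one thing to be careful about is matching the inductive step to the recursive definition of concatenation, so that the outer modality of $AB$ really is the head of $A$ and the necessitation/monotonicity rule of \RC applies cleanly.
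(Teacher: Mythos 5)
Your proposal is correct and follows essentially the same route as the paper: item~3 by induction on $|A|$ using the monotonicity rule of \RC, item~2 as a corollary of item~3, and item~1 directly from Lemma~\ref{lemma:basicLemma}.\ref{lemma:basicLemma2}. The one small divergence is item~4, where you run a second induction; the paper simply takes $C=\top$ in item~3 (since $B\vdash\top$ is an axiom of \RC), which avoids repeating the argument.
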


\begin{proof}
Item \ref{theorem:equivalenceLemma1} follows directly from Lemma \ref{lemma:basicLemma}.\ref{lemma:basicLemma2}, Item \ref{theorem:equivalenceLemma3} follows from an easy induction on the length of $A$ and Item \ref{theorem:equivalenceLemma2} follows from Item \ref{theorem:equivalenceLemma3}. Also, Item \ref{theorem:equivalenceLemma4} follows from Item \ref{theorem:equivalenceLemma3} by taking $C=\top$.
\end{proof}

We are not allowed to substitute just any part of a worm. For example, let us see that $1 \equiv 10$ but $11\not \equiv 101$: From \ref{theorem:equivalenceLemma}.\ref{theorem:equivalenceLemma4} we see that $10 \vdash 1$. Conversely, $1 \vdash 1 \wedge 0 \vdash 10$ by Lemma \ref{lemma:basicLemma}.\ref{lemma:basicLemma2} so that $1\equiv 10$. However, if we assume that $11 \equiv 101$, then $101 \vdash 11 \vdash 11 \wedge 01 \vdash 1101 \vdash 0101$ whence $101 \vdash 0101$. But this is nothing but $101 < 101$ which contradicts the irreflexivity of $<$.

So, in general we are not allowed to substitute equivalent parts into the left-most side of a worm. Lemma \ref{theorem:equivalenceLemma}.\ref{theorem:equivalenceLemma1} gives an exception and in Corollary \ref{theorem:equivalentSubstitutionOnLeftSeparatedByComma} we will see another exception: when $A\equiv B$, then $A0C \equiv B0C$.

\subsection{Decomposing and manipulating worms}\label{SubsDecomp}

In studying worms, and in particular to perform inductive arguments on them it is often useful to decompose worms into smaller worms. In this subsection we will introduce such decompositions, which will appear throughout the text. We use $A\leq_0 B$ as a shorthand for $A<_0 B$ or $A\equiv B$. Recall that we use $|A|$ to denote the length of $A$.

\begin{defi}\label{DefHead}
Let $A$ be a worm. By $h_{\x}(A)$ we denote the \emph{${\x}$-head} of ${A}$. Recursively: $h_{\x}(\top)=\top$, $h_{\x}(\zeta{A})= \zeta h_{\x}({A})$ if $\zeta\geq {\x}$ and $h_{\x}(\zeta{A})= \top$ if $\zeta < {\x}$.

Likewise, by $r_{\x}({A})$ we denote the \emph{${\x}$-remainder} of $A$: $r_{\x}(\top)=\top$, $r_{\x}(\zeta{A})= r_{\x}({A})$ if $\zeta\geq {\x}$ and $r_{\x}(\zeta{A})= \zeta{A}$ if $\zeta < {\x}$.
\end{defi}

In words, $h_{\x}({A})$ corresponds to the largest initial part (reading from left to right) of ${A}$ such that all symbols in $h_{\x}({A})$ are at least ${\x}$ and $r_{\x}({A})$ is that part of ${A}$ that remains when removing its ${\x}$-head. We thus have ${A} = h_{\x}({A})  r_{\x}({A})$ for all ${\x}$ and ${A}$. 

Observe that
\begin{equation}\label{heads}
 h_{\x}({A})  r_{\x}({A})\ \ \equiv \ \  h_{\x}({A}) \wedge r_{\x}({A}),
\end{equation}
as the first symbol of $r_{\x}({A})$ is less than ${\x}$ and $h_{\x}({A}) \in \Worms_{\x}$ (see Lemma \ref{lemma:basicLemma}).

As a particularly useful instance, we will write simply $h,r$ instead of $h_1,r_1$.

\begin{definition}
Given a worm $A$, we define $\body A=B$ if $r(A)=0B$, and $\body A=\top$ otherwise. We call $b(A)$ the {\em body} of $A$.
\end{definition}

\begin{lemma}\label{LemmBodyDecomp}
Given a worm $A\not=\top$, we have that 
\begin{enumerate}
\item
$A\equiv h(A)\wedge 0\body A$;

\item
$|\body A|<|A|$;

\item
$B<_0 r(A)$ if and only if ($B\leq_0 b(A)$ and $r(A)\not=\top$).
\end{enumerate}
\end{lemma}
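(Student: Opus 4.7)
The plan is to split according to whether $r(A) = \top$ or $r(A) = 0 b(A)$, which by the very definition of $b(A)$ are the only two possibilities. In the second case items 1--3 will essentially be rewriting, while in the first case items 1 and 2 require the hypothesis $A \ne \top$ and item 3 will reduce to a vacuous biconditional.

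For item 1, the factorization $A = h(A)\, r(A)$ together with (\ref{heads}) yields $A \equiv h(A) \wedge r(A)$; if $r(A) = 0 b(A)$ this is exactly the claim. If instead $r(A) = \top$, then $b(A) = \top$ and $A \equiv h(A)$; since $A \ne \top$, the worm $h(A)$ begins with some modality $\alpha \geq 1$, so $h(A) \vdash \alpha \top \vdash 0 \top = 0\, b(A)$ using the \RC axiom $\beta \phi \vdash \alpha \phi$ for $\alpha < \beta$, and hence $A \equiv h(A) \wedge 0\, b(A)$. For item 2, length bookkeeping suffices: if $r(A) = 0 b(A)$ then $|A| = |h(A)| + 1 + |b(A)| > |b(A)|$; if $r(A) = \top$ then $|b(A)| = 0 < |A|$, since $A \neq \top$.

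For item 3, if $r(A) = \top$ both sides of the biconditional are false---the right-hand side explicitly, and the left-hand side because $\top$ has no $<_0$-predecessors by Lemma \ref{theorem:ordersHaveMinimalElementAndAreDiscrete}.\ref{theorem:ordersHaveMinimalElementAndAreDiscrete:minimalElement}---so we may assume $r(A) = 0 b(A)$. For the direction $(\Leftarrow)$: if $B \equiv b(A)$ then $0 b(A) \equiv 0 B$, and if $B <_0 b(A)$ then $b(A) \vdash 0 B$, whence $0 b(A) \vdash 0 \, 0 B \vdash 0 B$ by necessitation and the transitivity axiom $\alpha\alpha\phi \vdash \alpha\phi$; in either subcase $B <_0 0\, b(A) = r(A)$. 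For the direction $(\Rightarrow)$: since $b(A) <_0 0\, b(A)$ trivially, the discreteness clause of Lemma \ref{theorem:ordersHaveMinimalElementAndAreDiscrete}.\ref{theorem:ordersHaveMinimalElementAndAreDiscrete:discrete} forbids $b(A) <_0 B$, and then linearity of $<_0$ on $\overline\Worms$ (Theorem \ref{theorem:wormsIsomorphicToOrdinals}) forces $B \leq_0 b(A)$.

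The main (mild) obstacle is precisely the $(\Rightarrow)$ direction of item 3: unlike the other steps, it cannot be handled by purely syntactic manipulation inside \RC but requires the linear-order structure of $<_0$ together with discreteness.
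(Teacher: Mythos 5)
Your proof is correct and follows essentially the same route as the paper's: split on whether $r(A)=\top$, use $A\equiv h(A)\wedge r(A)$ and $A\vdash 0\top$ for item 1, and for item 3 combine the minimal-element clause (to discard $r(A)=\top$), discreteness (to rule out $b(A)<_0 B<_0 0b(A)$), and linearity of $<_0$ on $\overline\Worms$ to conclude $B\leq_0 b(A)$. Your closing observation about the ``mild obstacle'' is accurate: the $(\Rightarrow)$ direction of item 3 is exactly the step in the paper's proof that appeals to the order-theoretic structure (discreteness and totality) rather than to purely syntactic \RC derivations.
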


\proof
We first address the first item. If $b(A)\not=\top$ then we know $A\equiv h(A)\wedge r(A)= h(A)\wedge 0b(A)$, otherwise, since $A\neq \top$, we always have $A\vdash 0$ so $A\equiv h(A)\wedge 0=h(A)\wedge 0b(A)$. 

It is obvious that $b(A)$ is always shorter than $A$ so that only the last item of the lemma needs to be proven. 

For the $\Leftarrow$ direction, suppose that $r(A) \neq \top$. Then, $r(A)= 0b(A)$ whence $B\leq_0 b(A) <_0 0b(A)$ and $B<_0 r(A)$.

For the $\Rightarrow$ direction, from $B<_0 r(A)$ we get that $r(A) \neq \top$ (Lemma \ref{theorem:ordersHaveMinimalElementAndAreDiscrete}.\ref{theorem:ordersHaveMinimalElementAndAreDiscrete:minimalElement}) whence $B<0b(A)$. Since $b(A) < B < 0b(A)$ is not possible (Lemma \ref{theorem:ordersHaveMinimalElementAndAreDiscrete}.\ref{theorem:ordersHaveMinimalElementAndAreDiscrete:discrete}) we get the required $B\leq_0 b(A)$.
\endproof

The following lemma tells us that if for some worm $A$ the first element is at most zero, then any worm $A'$ equivalent to $A$ must also start with a first element that is at most zero.

\begin{lemma}\label{LemmNoHead}
If there exists a worm $B$ such that $A\equiv r(B)$, then $h(A)=\top$.
\end{lemma}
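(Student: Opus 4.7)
The strategy is to argue by contradiction: assuming $h(A)\neq\top$, I will derive $A<_1 A$, which contradicts the irreflexivity of $<_1$ noted just after Theorem~\ref{theorem:wormsIsomorphicToOrdinals}. Two ingredients suffice: (i) a reflection-like property shared by every worm of the form $r(B)$, namely $r(B)\vdash [1]r(B)$; and (ii) the observation that any worm whose leading modality is at least $1$ must prove $\langle 1\rangle\top$.

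For (i), I would observe that by the definition of $r_1$, the worm $r(B)$ is either $\top$ or has the form $0C$ for some worm $C$. In the first case $\top\vdash [1]\top$ holds by necessitation; in the second case the $\glp$-axiom $\langle\alpha\rangle\phi\to[\beta]\langle\alpha\rangle\phi$, instantiated with $\alpha=0<1=\beta$ and $\phi=C$, gives $0C\vdash [1]\,0C$. Either way $r(B)\vdash [1]r(B)$. Transferring this across the equivalence $A\equiv r(B)$ via necessitation inside the box (from $r(B)\vdash A$ we get $[1]r(B)\vdash [1]A$, and then chaining with $A\vdash r(B)\vdash [1]r(B)$) yields $A\vdash [1]A$.

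For (ii), suppose for contradiction that $h(A)\neq\top$. Unwinding the recursive definition of $h_1$, this forces $A=\zeta A'$ for some ordinal $\zeta\geq 1$ and some worm $A'$. Since $A'\vdash\top$, necessitation yields $A\vdash\zeta\top$, and then the dual of the monotonicity axiom $[1]\psi\to[\zeta]\psi$ (valid because $1\leq\zeta$) gives $A\vdash\langle 1\rangle\top$.

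Combining (i) and (ii) via the standard K-principle $[1]\phi\wedge\langle 1\rangle\psi\vdash\langle 1\rangle(\phi\wedge\psi)$, applied with $\phi=A$ and $\psi=\top$, I obtain $A\vdash\langle 1\rangle A$, that is, $A<_1 A$, contradicting the irreflexivity of $<_1$. The only step requiring any care is establishing $r(B)\vdash [1]r(B)$ when $r(B)=0C$, but this is a direct instance of the positive reflection axiom, so no substantial obstacle arises.
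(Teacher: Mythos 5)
Your proof is correct and, after unwinding the $\RC$-axiom $\beta\phi\wedge\alpha\psi\vdash\beta(\phi\wedge\alpha\psi)$ into its $\glp$-level derivation (positive reflection plus a K-principle), it is essentially the same argument as the paper's, which runs $A\vdash h(A)\wedge r(B)\vdash 1\wedge r(B)\vdash 1r(B)\vdash 0r(B)$ and contradicts irreflexivity of $<_0$; you simply package the steps as $A\vdash[1]A$ and $A\vdash\langle 1\rangle\top$ and land on irreflexivity of $<_1$ instead.
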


\proof
Assuming that $A\equiv r(B)$, we clearly have $A\vdash h(A)\wedge r(B)$. If $h(A)\not=\top$ then $h(A)\vdash 1$, and thus $A\vdash 1\wedge r(B)\vdash 1r(B)\vdash 0r(B)$. This contradicts the irreflexivity of $<_0$.
\endproof

By Theorem \ref{theorem:wormsIsomorphicToOrdinals} we knew that there is a close relation between worms and ordinals and the above lemma exhibits yet another ordinal feature: If we can write an ordinal $\alpha$ as $\alpha' + 1$, then any other way of writing $\alpha$ must necessarily end with a `$+1$' too. This analogy will be made more precise after proving Lemma \ref{LemmOrdBasic}.\\
\medskip

So far we have seen operations on worms that decompose them into different parts. Another very important manipulation on worms is a sort of translation where all modalities in a worm are shifted by a constant amount. 

As we shall see in the remainder of this paper, this shift preserves a lot of structure and can even be conceived of as a functor between different spaces. 

We will define a shift to the right and one to the left. In order to define the shift to the left we need to recall a very basic fact from ordinal arithmetic (see for example \cite{Pohlers:2009:PTBook}).

\begin{lemma}\label{theorem:BasicPropertiesOrdinalArithmetic}
If $\zeta {<} \xi$ are ordinals, there exists a unique $\eta$ such that $\zeta + \eta = \xi.$
\end{lemma}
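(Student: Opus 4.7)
The plan is to prove existence and uniqueness separately, using standard tools from ordinal arithmetic. For existence, the cleanest route is transfinite induction on $\xi$. The successor case splits into two subcases: if $\xi = \zeta + 1$, then $\eta = 1$ works; otherwise $\xi = \xi' + 1$ with $\zeta < \xi'$, and the induction hypothesis yields some $\eta'$ with $\zeta + \eta' = \xi'$, so $\eta = \eta' + 1$ works. In the limit case, one can take $\eta = \sup\{\eta' : \zeta + \eta' < \xi\}$; the hypothesis $\zeta < \xi$ and the continuity of ordinal addition in its second argument then give $\zeta + \eta = \xi$.

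An alternative, perhaps more conceptual existence argument is to observe that $\xi \setminus \zeta = \{\alpha : \zeta \leq \alpha < \xi\}$ inherits a well-order from the ordinals, so it is order-isomorphic to a unique ordinal $\eta$. The map $\alpha \mapsto \zeta + \alpha$ restricts to an order-isomorphism between $\eta$ and $\xi \setminus \zeta$, and concatenating with the order-isomorphism between $\zeta$ and $\{\alpha : \alpha < \zeta\}$ exhibits $\zeta + \eta$ as order-isomorphic to $\xi$, hence equal to it.

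For uniqueness, I would appeal to the strict monotonicity of ordinal addition in the right argument: if $\eta_1 < \eta_2$ then $\zeta + \eta_1 < \zeta + \eta_2$. Consequently, if $\zeta + \eta_1 = \zeta + \eta_2$ then neither $\eta_1 < \eta_2$ nor $\eta_2 < \eta_1$ can hold, so $\eta_1 = \eta_2$ by trichotomy.

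No step is a genuine obstacle since all of this is classical ordinal arithmetic; the only subtlety is making the limit case of the induction precise, which requires invoking that ordinal addition is normal (in particular continuous) in its second argument. Since the paper presumes familiarity with ordinal arithmetic and cites Pohlers for background, a one-line proof referring to these two facts (existence by induction or by order-type, uniqueness by strict monotonicity) should suffice.
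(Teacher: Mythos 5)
Your proof is correct, but note that the paper does not actually prove this lemma: it states it as ``a very basic fact from ordinal arithmetic'' and simply cites Pohlers's textbook for background. So there is no paper proof to compare against. Both of your arguments for existence are standard and sound: the transfinite induction on $\xi$ (with the limit case handled via continuity of $\alpha \mapsto \zeta + \alpha$) and the order-type argument identifying $\eta$ with the order type of $\{\alpha : \zeta \leq \alpha < \xi\}$ are the two textbook routes, and your uniqueness argument via strict monotonicity of right addition plus trichotomy is exactly right. In a paper like this one, which explicitly defers to references for such ordinal-arithmetic basics, citing the fact (as the authors do) or giving a one-line sketch along either of your lines would both be acceptable.
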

We will denote this unique $\eta$ by $-\zeta + \xi$ and it is this operation that is used to define our shift on worms to the left. We are now ready to introduce the shift to the right which is an operation $\alpha\uparrow$ that in general promotes worms to worms with higher consistency strength. As a converse operation we introduce a demoting operator $\alpha \downarrow$ which can be viewed as our shift to the left.

\begin{definition}[$\alpha\uparrow$ and $\alpha \downarrow$]
Let $A$ be a worm and $\alpha$ an ordinal. By $\alpha \uparrow A$ we denote the worm that is obtained by simultaneously substituting each $\beta$ that occurs in $A$ by $\alpha + \beta$. 

Likewise, if $A \in \Worms_\alpha$ we denote by $\alpha \downarrow A$ the worm that is obtained by replacing simultaneously each $\beta$ in $A$ by $-\alpha+\beta$.
\end{definition}
Note that by Lemma \ref{theorem:BasicPropertiesOrdinalArithmetic}, the operation $\alpha \downarrow$ is well-defined on $\Worms_\alpha$.

\begin{lemma}\label{theorem:uparrowProperties}

For $\alpha, \beta, \gamma$ ordinals and worms $A,B$ we have:

\begin{enumerate}
\item
$\alpha \uparrow \beta < \alpha \uparrow \gamma \ \Leftrightarrow \ \beta < \gamma$,\label{item:uparrowOrderPreserving:theorem:uparrowProperties}

\item
$\alpha \uparrow \beta \geq \beta$,

\item\label{item:additivity:theorem:uparrowProperties}
$\alpha \uparrow (\beta \uparrow A) = (\alpha + \beta)\uparrow A$,

\item \label{item:upDownarrowIsIdentity:theorem:uparrowProperties}
$\alpha\downarrow(\beta\uparrow A)=(-\alpha+\beta)\uparrow A$, provided $\alpha\leq \beta$,

\item\label{coadd}
$\alpha\downarrow(\beta\downarrow A)=(\beta+\alpha)\downarrow A$, provided $A\in \Worms_{\beta+\alpha}$,

\item\label{inverse}
$\alpha \uparrow ((\beta+\alpha) \downarrow A) = \beta \downarrow A$ for $A\in \Worms_{\beta+\alpha}$,


%
%
%


\end{enumerate}

\end{lemma}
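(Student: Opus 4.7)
All six items are elementary consequences of ordinal arithmetic. The key observation is that both $\alpha\uparrow$ and $\alpha\downarrow$ act by simultaneously substituting each modality $\gamma$ occurring in $A$ by $\alpha+\gamma$ and $-\alpha+\gamma$, respectively. Consequently, apart from item \ref{item:uparrowOrderPreserving:theorem:uparrowProperties} and the second item (which concern ordinals alone), it suffices to verify the claimed equality for a single such modality $\gamma$; equality of the two worms then follows pointwise. The first two items are standard facts about ordinal addition: $\gamma\mapsto\alpha+\gamma$ is strictly monotone, and $\beta\leq\alpha+\beta$.

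For the remaining four items, the pointwise identities reduce to associativity of ordinal addition together with the uniqueness clause of Lemma \ref{theorem:BasicPropertiesOrdinalArithmetic}. Item \ref{item:additivity:theorem:uparrowProperties} is immediate from $\alpha+(\beta+\gamma)=(\alpha+\beta)+\gamma$. For item \ref{item:upDownarrowIsIdentity:theorem:uparrowProperties}, the well-definedness of $\alpha\downarrow$ on $\beta\uparrow A$ follows from $\alpha\leq\beta\leq\beta+\gamma$, and the pointwise identity $-\alpha+(\beta+\gamma)=(-\alpha+\beta)+\gamma$ is verified by
\[
\alpha+\bigl((-\alpha+\beta)+\gamma\bigr)=\bigl(\alpha+(-\alpha+\beta)\bigr)+\gamma=\beta+\gamma,
\]
so the two sides agree by uniqueness. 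Item \ref{coadd} is treated analogously: from $\beta+\alpha\leq\gamma=\beta+(-\beta+\gamma)$ and item \ref{item:uparrowOrderPreserving:theorem:uparrowProperties} one obtains $\alpha\leq -\beta+\gamma$, so that $\alpha\downarrow$ applies to $\beta\downarrow A$; then
\[
(\beta+\alpha)+\bigl(-\alpha+(-\beta+\gamma)\bigr)=\beta+\bigl(\alpha+(-\alpha+(-\beta+\gamma))\bigr)=\beta+(-\beta+\gamma)=\gamma,
\]
whence $-\alpha+(-\beta+\gamma)=-(\beta+\alpha)+\gamma$ by uniqueness. Item \ref{inverse} is similar: $\beta+\bigl(\alpha+(-(\beta+\alpha)+\gamma)\bigr)=\gamma$, so $\alpha+(-(\beta+\alpha)+\gamma)=-\beta+\gamma$.

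No genuine obstacle is anticipated. The only point that requires mild care is, for each identity involving $\downarrow$, to check that the hypotheses on $A$ ensure every modality $\gamma$ of $A$ meets the inequality needed to place the intermediate expressions in the domain of the relevant $\downarrow$ operator; these are exactly the inequalities displayed above, and they follow routinely from item \ref{item:uparrowOrderPreserving:theorem:uparrowProperties} combined with the standing hypothesis $A\in\Worms_{\beta+\alpha}$ (or $\alpha\leq\beta$, depending on the item).
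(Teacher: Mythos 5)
Your proof is correct and follows essentially the same route as the paper: reduce each identity to a pointwise statement about the modalities occurring in $A$, then settle items \ref{item:additivity:theorem:uparrowProperties}--\ref{inverse} by associativity of ordinal addition together with the uniqueness clause of Lemma \ref{theorem:BasicPropertiesOrdinalArithmetic}. The paper only writes out item \ref{item:upDownarrowIsIdentity:theorem:uparrowProperties} as a representative case and declares the rest analogous, whereas you spell out all the remaining computations, but the method is identical.
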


\proof
The first three items are trivial. It is clearly sufficient to prove items \ref{item:upDownarrowIsIdentity:theorem:uparrowProperties} --- \ref{inverse} only for ordinals rather than for worms. All these items have similar elementary proofs. We shall prove Item \ref{item:upDownarrowIsIdentity:theorem:uparrowProperties} as an illustration. Thus, let $\alpha\leq \beta$ and fix some ordinal $\gamma$. We see that 
\[
\begin{array}{lll}
\alpha + (\alpha \downarrow \beta)\uparrow \gamma & = &\alpha + ((\alpha \downarrow \beta) + \gamma)\\
&  = & (\alpha + (\alpha \downarrow \beta)) + \gamma\\
  & = & \beta + \gamma.\\
\end{array}
\]
Thus, $(\alpha \downarrow \beta)\uparrow \gamma$ is the unique ordinal $\delta$ so that $\alpha + \delta = \beta + \gamma$. In other words, \\
$\alpha\downarrow(\beta\uparrow \gamma)=(-\alpha+\beta)\uparrow \gamma$, provided $\alpha\leq \beta$.
\qed

\medskip

As announced before, the shift operators preserve important structure as is expressed in the following lemma.

\begin{lemma}\label{theorem:uparrowInvarianceProperties} For worms $A,B \in \Worms_\xi$ we have
\begin{enumerate}

\item\label{item:SmallerRelationExtendsLargerRelation:theorem:uparrowInvarianceProperties}
$A <_\xi B \ \Leftrightarrow \ A<B$;

\item\label{item:comparingDifferentOrders:theorem:uparrowInvarianceProperties}
$A <_{\xi} B \ \Leftrightarrow \zeta \uparrow A <_{\zeta + \xi}  \zeta \uparrow B$.


\end{enumerate}

\end{lemma}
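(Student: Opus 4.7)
My overall plan is to combine two ingredients: the linearity of $<_\xi$ on $\overline\Worms_\xi$ (Theorem \ref{theorem:wormsIsomorphicToOrdinals}) to settle the $\Leftarrow$ directions by contradiction, and the observation that the syntactic translation $\alpha\mapsto \zeta+\alpha$ preserves \glp-provability, which handles the $\Rightarrow$ direction of Item~\ref{item:comparingDifferentOrders:theorem:uparrowInvarianceProperties} essentially for free. For Item~\ref{item:SmallerRelationExtendsLargerRelation:theorem:uparrowInvarianceProperties}, the forward direction is immediate from the monotonicity axiom $\xi A\vdash 0A$, so $B\vdash \xi A$ gives $B\vdash 0A$. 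For the converse, assuming $B\vdash 0A$ with $A,B\in\Worms_\xi$, I will rule out the two alternatives to $A<_\xi B$ provided by linearity: $A\equiv B$ collapses to $B\vdash 0B$, while $B<_\xi A$ (i.e.\ $A\vdash \xi B$) yields the chain $B\vdash 0A\vdash 0\xi B\vdash 00B\vdash 0B$; both violate irreflexivity of $<_0$ via Lemma~\ref{theorem:ordersHaveMinimalElementAndAreDiscrete}.

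For Item~\ref{item:comparingDifferentOrders:theorem:uparrowInvarianceProperties} $\Rightarrow$, I will verify that applying the substitution $\alpha\mapsto \zeta+\alpha$ to a whole \glp-derivation sends each axiom instance to another axiom instance of the same schema: the K and L\"ob schemas are index-preserving, while the monotonicity axiom $[\alpha]\psi\to[\beta]\psi$ (for $\alpha\leq\beta$) and the mixing axiom $\langle\alpha\rangle\psi\to [\beta]\langle\alpha\rangle\psi$ (for $\alpha<\beta$) survive because $\zeta+\cdot$ is strictly monotone, hence preserves both $\leq$ and $<$. Modus Ponens and necessitation transfer directly, so by induction on proofs any \glp-theorem $\phi$ yields a \glp-theorem by replacing every modality $\alpha$ with $\zeta+\alpha$. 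Applied to $B\vdash \xi A$, this gives precisely $\zeta\uparrow B\vdash (\zeta+\xi)(\zeta\uparrow A)$.

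For the converse of Item~\ref{item:comparingDifferentOrders:theorem:uparrowInvarianceProperties}, I will mirror the strategy of Item~\ref{item:SmallerRelationExtendsLargerRelation:theorem:uparrowInvarianceProperties}: assuming $\zeta\uparrow A<_{\zeta+\xi}\zeta\uparrow B$, linearity of $<_\xi$ on $\overline\Worms_\xi$ reduces matters to contradicting the alternatives $A\equiv B$ and $B<_\xi A$. In the first case, applying the translation to both directions of the equivalence yields $\zeta\uparrow A\equiv\zeta\uparrow B$, contradicting irreflexivity of $<_{\zeta+\xi}$. In the second, the already-proven $\Rightarrow$ gives $\zeta\uparrow B<_{\zeta+\xi}\zeta\uparrow A$, and transitivity of $<_{\zeta+\xi}$ (via $(\zeta+\xi)(\zeta+\xi)\phi\vdash (\zeta+\xi)\phi$) combines with the hypothesis to produce $\zeta\uparrow A <_{\zeta+\xi}\zeta\uparrow A$, again contradicting irreflexivity.

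The step I expect to require the most care is the axiom-preservation claim underpinning Item~\ref{item:comparingDifferentOrders:theorem:uparrowInvarianceProperties}; it is mechanical but must be checked schema by schema and depends crucially on the fact that $\zeta+\cdot$ is a strictly monotone operation on ordinals. Everything else in the lemma is a direct deployment of linearity together with the derived identities $\xi\xi\phi\vdash\xi\phi$ and $\xi\phi\vdash 0\phi$.
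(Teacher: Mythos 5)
Your proposal is correct and follows essentially the same route as the paper: the forward direction of item~\ref{item:SmallerRelationExtendsLargerRelation:theorem:uparrowInvarianceProperties} by monotonicity, the forward direction of item~\ref{item:comparingDifferentOrders:theorem:uparrowInvarianceProperties} by observing that the ordinal translation $\alpha\mapsto\zeta+\alpha$ maps derivations to derivations (the paper carries this out in \RC rather than full \glp, which is slightly lighter since \RC has fewer axiom schemata, but the idea is identical), and both $\Leftarrow$ directions by combining the $\Rightarrow$ direction with linearity on $\overline\Worms_\xi$ and irreflexivity. The only cosmetic imprecision is in the $A\equiv B$ subcase of item~\ref{item:comparingDifferentOrders:theorem:uparrowInvarianceProperties}: the equivalence $\zeta\uparrow A\equiv\zeta\uparrow B$ contradicts irreflexivity only after it is combined with the standing hypothesis $\zeta\uparrow A<_{\zeta+\xi}\zeta\uparrow B$, which you clearly intend.
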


\begin{proof}

The $\Rightarrow$ direction of \ref{item:SmallerRelationExtendsLargerRelation:theorem:uparrowInvarianceProperties}
is easy. The other direction follows directly from the $\Rightarrow$ direction using irreflexivity and the fact that $<_{\xi}$ linearly orders $\overline \Worms_\xi$.

The $\Rightarrow$ direction of \ref{item:comparingDifferentOrders:theorem:uparrowInvarianceProperties} is the consequence of a more general observation. One can easily extend the operation $\zeta \uparrow$ to any formula of \RC. As the operation $\zeta \uparrow$ is order preserving on the ordinals one can easily prove by induction that any proof in \RC remains a proof after applying $\zeta \uparrow$ to every formula appearing in it. Thus, if $\phi\vdash \psi$, then also $\zeta\uparrow\phi \vdash \zeta \uparrow \psi$.

The $\Leftarrow$ direction follows directly from the $\Rightarrow$ direction using irreflexivity and the fact that $<_\xi$ is a linear order on $\overline\Worms_\xi$.
\end{proof}

As a consequence of this lemma we see that we can view each $\alpha {\uparrow}$ as an isomorphism between structures.

\begin{lemma}\label{theorem:IsomorphicWormFragments}
The map $\alpha {\uparrow}$ is an isomorphism between $(\Worms, <)$ and $(\Worms_\alpha,<_{\alpha})$.

Moreover, the map $\alpha\upAlg:\overline \Worms\rightarrow \overline \Worms_\alpha$ given by $\alpha\upAlg\overline A=\overline{\alpha\uparrow A}$ is well-defined and also defines an isomorphism.
\end{lemma}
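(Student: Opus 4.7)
The plan is to assemble this statement from the prior lemmas, with essentially no new work needed. The main obstacle, to the extent there is one, is being careful about what ``isomorphism'' means on the quotient and extracting the needed injectivity from the order-theoretic facts, since our ``iso'' is between two structures endowed with a strict order.

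First I would verify that $\alpha{\uparrow}$ does map $\Worms$ into $\Worms_\alpha$: every modality $\beta$ is replaced by $\alpha+\beta\geq\alpha$, so the image lies in $\Worms_\alpha$ by definition. For bijectivity I would use Lemma \ref{theorem:uparrowProperties}.\ref{inverse} with $\beta=0$, which gives $\alpha\uparrow(\alpha\downarrow B)=B$ for every $B\in\Worms_\alpha$, so $\alpha\downarrow\!\!\!\upharpoonright_{\Worms_\alpha}$ is a two-sided inverse (the other composition $\alpha\downarrow(\alpha\uparrow A)=A$ is immediate from the definitions, as $\alpha+\beta\geq\alpha$ always, and $-\alpha+(\alpha+\beta)=\beta$). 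Order preservation is precisely Lemma \ref{theorem:uparrowInvarianceProperties}.\ref{item:comparingDifferentOrders:theorem:uparrowInvarianceProperties} with $\zeta=\alpha$ and $\xi=0$: $A<B\Leftrightarrow \alpha\uparrow A<_\alpha\alpha\uparrow B$. This settles the first sentence of the lemma.

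For the quotient statement, I would first check well-definedness. If $A\equiv B$, then neither $A<B$ nor $B<A$ holds; by the equivalence just cited, neither $\alpha\uparrow A<_\alpha\alpha\uparrow B$ nor $\alpha\uparrow B<_\alpha\alpha\uparrow A$ holds; since $<_\alpha$ linearly orders $\overline\Worms_\alpha$, this forces $\alpha\uparrow A\equiv\alpha\uparrow B$. Hence $\alpha\upAlg\overline A:=\overline{\alpha\uparrow A}$ is a well-defined function from $\overline\Worms$ to $\overline\Worms_\alpha$. Order preservation of $\alpha\upAlg$ is just the quotient version of what we already have.

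Finally I would check that $\alpha\upAlg$ is a bijection on equivalence classes. Surjectivity: for any $\overline B\in\overline\Worms_\alpha$, pick a representative $B\in\Worms_\alpha$; then $\alpha\uparrow(\alpha\downarrow B)=B$, so $\alpha\upAlg\,\overline{\alpha\downarrow B}=\overline B$. Injectivity: suppose $\alpha\upAlg\overline A=\alpha\upAlg\overline B$, i.e.\ $\alpha\uparrow A\equiv\alpha\uparrow B$. Then neither $\alpha\uparrow A<_\alpha\alpha\uparrow B$ nor the reverse holds, so by Lemma \ref{theorem:uparrowInvarianceProperties}.\ref{item:comparingDifferentOrders:theorem:uparrowInvarianceProperties} neither $A<B$ nor $B<A$ holds, and by linearity of $<$ on $\overline\Worms$ (Theorem \ref{theorem:wormsIsomorphicToOrdinals}) we conclude $A\equiv B$, i.e.\ $\overline A=\overline B$. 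Combined with the order preservation already established, this yields the asserted isomorphism between $(\overline\Worms,<)$ and $(\overline\Worms_\alpha,<_\alpha)$.
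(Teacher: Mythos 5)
Your proof is correct and follows essentially the same route as the paper: bijectivity from Lemma \ref{theorem:uparrowProperties}.\ref{inverse}, order preservation from Lemma \ref{theorem:uparrowInvarianceProperties}.\ref{item:comparingDifferentOrders:theorem:uparrowInvarianceProperties}, and well-definedness on the quotient via linearity of $<_\alpha$ on $\overline\Worms_\alpha$. The only difference is that you spell out the linearity argument for well-definedness (and for injectivity on classes) that the paper leaves implicit in ``in view of the first claim.''
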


\begin{proof}
The first claim follows from Property \ref{item:comparingDifferentOrders:theorem:uparrowInvarianceProperties} of Lemma \ref{theorem:uparrowInvarianceProperties}. Note that by Property \ref{inverse} of Lemma \ref{theorem:uparrowProperties} we see that $\alpha \uparrow (\alpha \downarrow A) = A$ for $A \in \Worms_\alpha$ so that $\alpha{\uparrow}$ is clearly a bijection.

To check the second point it suffices to observe that if $A\equiv B$ then in view of the first claim,  $\alpha\uparrow A\equiv\alpha\uparrow B$, so that the map $\alpha\upAlg$ is well-defined.
\end{proof}

\subsection{Reducing the $\xi$-consistency orderings}

In this subsection we shall see that any question of the form $A<_\alpha B$ can be reduced in various ways to simpler questions, for example, to questions of the form $A'<B'$.

To do so, we first need a reduction lemma (first published in \cite{FernandezJoosten:2012:TuringProgressions}). Recall from Definition \ref{DefHead} that $h_\xi(A)$ is the largest initial segment of $A$ which lies in $\Worms_\xi$, while $r_\xi(A)$ is the rest/remainder of $A$ after removing $h_\xi (A)$.

\begin{lemma}\label{lemma:reducingGeneralOrderToSpecialOrder}
Let $A$ and $B$ be worms and $\xi$ an ordinal. Then, $A>_\xi B$ if and only if $h_{\x}(A) >_{\x} h_{\x}(B)$ and $A \vdash r_{\x}(B).$
\end{lemma}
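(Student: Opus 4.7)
The plan is to prove the two implications separately, exploiting the decomposition $A\equiv h_\xi(A)\wedge r_\xi(A)$ of equation (\ref{heads}) and the algebraic laws of Lemma \ref{lemma:basicLemma}. The backward direction and the ``easy'' part of the forward direction will be direct calculations; the delicate step of the forward direction will require a separate conservativity observation, which I expect to be the main obstacle.

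For the backward direction, I would assume $h_\xi(A)>_\xi h_\xi(B)$ and $A\vdash r_\xi(B)$. From $A\vdash h_\xi(A)\vdash\xi h_\xi(B)$ one obtains $A\vdash\xi h_\xi(B)\wedge r_\xi(B)$. If $r_\xi(B)=\top$ then $B=h_\xi(B)$ and we are done at once. Otherwise $r_\xi(B)=\zeta C$ with $\zeta<\xi$, and two applications of Lemma \ref{lemma:basicLemma} give $\xi h_\xi(B)\wedge\zeta C\equiv\xi(h_\xi(B)\wedge\zeta C)\equiv\xi(h_\xi(B)\zeta C)=\xi B$, so $A\vdash\xi B$.

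For the forward direction I would assume $A\vdash\xi B$. Equation (\ref{heads}) gives $B\vdash r_\xi(B)$ and $B\vdash h_\xi(B)$, so monotonicity of $\xi$ yields $A\vdash\xi r_\xi(B)$ and $A\vdash\xi h_\xi(B)$. The first immediately delivers $A\vdash r_\xi(B)$: either $r_\xi(B)=\top$ trivially, or $r_\xi(B)=\zeta C$ with $\zeta<\xi$ and the RC axioms provide $\xi\zeta C\vdash\zeta\zeta C\vdash\zeta C$.

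The remaining task, which I expect to be the hard part, is to strengthen $A\vdash\xi h_\xi(B)$ to $h_\xi(A)\vdash\xi h_\xi(B)$, i.e.\ to discard the ``small-modality tail'' $r_\xi(A)$ of $A$. My plan is to establish the following conservativity sub-lemma: for $\phi,D\in\Worms_\xi$ and any worm $C$ that is either $\top$ or starts with a modality strictly below $\xi$, one has $\phi C\vdash D$ iff $\phi\vdash D$. The right-to-left direction is trivial since $\phi C\vdash\phi$. For the left-to-right direction, the key facts are that $C\vdash[\xi]C$ in \glp (by the axiom $\la\eta\ra\chi\to[\beta]\la\eta\ra\chi$ for $\eta<\beta$, applied to the leading modality of $C$) together with the observation that the $\Worms_\xi$-consequences of a formula are controlled by its modalities $\geq\xi$; together these show that conjoining $C$ to $\phi$ cannot create a new $\Worms_\xi$-consequence. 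Applying this sub-lemma with $\phi=h_\xi(A)$, $C=r_\xi(A)$ and $D=\xi h_\xi(B)$ converts the already established $A\vdash\xi h_\xi(B)$ into $h_\xi(A)\vdash\xi h_\xi(B)$, finishing the forward direction and hence the proof.
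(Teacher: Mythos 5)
Your backward direction and the first half of the forward direction are correct and match the paper's argument. The problem is the ``conservativity sub-lemma'' on which the rest of the forward direction hangs: you assert that, for $\phi,D\in\Worms_\xi$ and $C$ either $\top$ or starting with a modality strictly below $\xi$, one has $\phi C\vdash D$ iff $\phi\vdash D$, and you justify the nontrivial direction only by the remark that $C\vdash[\xi]C$ together with ``the observation that the $\Worms_\xi$-consequences of a formula are controlled by its modalities $\geq\xi$.'' That ``observation'' is not a fact you can cite; it is a restatement of the very claim you need to prove. Indeed, specializing your sub-lemma to $\phi=h_\xi(A)$, $C=r_\xi(A)$, $D=\xi h_\xi(B)$, the sub-lemma literally says: if $A\vdash\xi h_\xi(B)$ then $h_\xi(A)\vdash\xi h_\xi(B)$ -- which is exactly the step you were trying to establish. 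So the argument as written is circular. The fact that $C\vdash[\xi]C$ does not, on its own, yield the conservativity claim: from $\phi\wedge C\vdash D$, necessitation and distribution give you $[\xi]\phi\wedge[\xi]C\vdash[\xi]D$, but there is no way to strip off the $[\xi]$'s, and $\phi\in\Worms_\xi$ emphatically does \emph{not} satisfy $\phi\vdash[\xi]\phi$ (that would violate L\"ob's axiom). Nothing in the \RC/\glp axioms lets you discard the conjunct $C$ directly.

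What actually makes the forward direction go through is a global input you have not used: by Theorem~\ref{theorem:wormsIsomorphicToOrdinals}, $<_\xi$ linearly orders $\overline\Worms_\xi$. Since $h_\xi(A),h_\xi(B)\in\Worms_\xi$, exactly one of $h_\xi(A)\equiv h_\xi(B)$, $h_\xi(B)>_\xi h_\xi(A)$, or $h_\xi(A)>_\xi h_\xi(B)$ holds. In each of the first two cases, from $A\vdash\xi h_\xi(B)$ and $A\equiv h_\xi(A)\wedge r_\xi(A)$ you can derive $A\vdash\xi A$, contradicting the irreflexivity of $<_\xi$; this forces the third case, which is the desired $h_\xi(A)>_\xi h_\xi(B)$. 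Replacing your unproved sub-lemma by this trichotomy-plus-irreflexivity argument would close the gap.
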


\proof
$(\Rightarrow)$ Assume $A>_\xi B$, i.e., $A\vdash \langle \xi\rangle B$. By \eqref{heads}, $B \equiv h_{\x}(B) \wedge r_{\x}(B)$ whence $A \vdash  {\x} B \vdash  {\x}  (h_{\x}(B) \wedge r_{\x}(B)) \vdash {\x}h_{\x}(B) \wedge  \x  r_{\x}(B)\vdash  \x  r_{\x}(B) \vdash r_{\x}(B)$, since $r_\x (B)$ is either $\top$ or starts with a modality stricly below $\xi$.

It remains to show that $h_\x (A) >_\x h_\x (B)$. Again, we write $A \equiv h_{\x}(A) \wedge r_{\x}(A)$. As $h_{\x}(A), h_{\x}(B) \in \Worms_{\x}$ we know that either (i) $h_{\x}(A) \equiv h_{\x}(B)$, (ii) $h_{\x}(B) \vdash \x h_{\x}(A)$ or (iii) $h_{\x}(A)\vdash \x  h_{\x}(B)$ holds, so it suffices to discard cases (i) and (ii) under the assumption that $A \vdash\x  B$ whence $A \vdash  \x  h_{\x}(B)$. 

Suppose now $h_{\x}(A) \equiv h_{\x}(B)$. Then, $A\equiv h_\x (A) \wedge r_\x (A) \vdash  \x  h_\x (B) \wedge r_\x (A) \vdash  \x  h_\x (A) \wedge r_\x (A) \vdash  \x A$ which contradicts the  irreflexivity of $<_{\x}$.

By a similar argument, the assumption that $h_{\x}(B) \vdash\x h_{\x}(A)$ contradicts the irreflexivity of $<_{\x}$ and we conclude that $h_{\x}(A)\vdash \x  h_{\x}(B)$.

$(\Leftarrow)$ This is the easier direction. Assume that $h_{\x}(A) \vdash  {\x} h_{\x}(B)$ and $A \vdash r_{\x}(B)$. Then, $A \equiv h_{\x}(A) \wedge r_{\x}(A)\vdash  \x  h_{\x}(B) \wedge r_{\x}(B)\vdash \x ( h_{\x}(B) \wedge r_{\x}(B))\vdash  \x B.$
\qed

In the right-hand side of Lemma \ref{lemma:reducingGeneralOrderToSpecialOrder} we see that the first conjunct $h_{\x}(A) >_{\x} h_{\x}(B)$ is only referring to worms in $\Worms_\x$ and their $<_\x$ relations. The worm $r_{\x}(B)$ starts with a modality strictly less than $\x$ and thus  the second conjunct $A \vdash r_{\x}(B)$ of the lemma can be settled by calling recursively to the lemma once more. Thus,  Lemma \ref{lemma:reducingGeneralOrderToSpecialOrder} recursively reduces the evaluation of statements of the form $A<_{\x}B$ with $A,B\in\Worms$ to evaluation of statements of the form $A'<_{\x'}B'$ with $A',B'\in\Worms_{\x'}$.

Moreover, we know (by Lemma \ref{theorem:uparrowInvarianceProperties}) that
\[
\begin{array}{lll}
h_\xi (A) >_\xi h_\xi (B) & \Leftrightarrow & h_\xi (A) >_0 h_\xi (B)\\
& \Leftrightarrow & \xi \downarrow  h_\xi (A) >_0 \xi \downarrow h_\xi (B).
\end{array}
\]
Thus, Lemma \ref{lemma:reducingGeneralOrderToSpecialOrder} tells us that by recursion on $\xi$, any question about $B<_\xi A$ can be reduced to question about $B'<_0 A'$. 

Thus, we can reduce questions about any $<_\xi$ ordering to questions about the $<_0$ ordering. We shall now see that we reduce questions about the $<_0$ ordering even further in that we may restrict questions of the form $A<_0 B$ to the case where one of $A$ or $B$ is either $\top$ or of the form $0C$.

Before we prove this further reduction, we first need an elementary lemma that relates the notions $<$, $\leq$, and $\vdash$.

\begin{lemma}\label{LemmVdashLeq}\ 
\begin{enumerate}
\item
$(A\leq B < C) \ \vee \ (A< B \leq C) \ \Rightarrow \ A<C$;

\item
$C\leq A \ \Leftrightarrow \ C<0A$;

\item
If $B\vdash A$ then $A\leq _0 B$;

\item
$A\vdash r(B) \ \Leftrightarrow \ A\geq r(B)$.
\end{enumerate}
\end{lemma}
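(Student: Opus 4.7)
The plan is to exploit the basic properties of the orderings already established: irreflexivity of $<_\xi$, linearity of $<_0$ on $\overline\Worms$, discreteness (Lemma \ref{theorem:ordersHaveMinimalElementAndAreDiscrete}), and the fact that the modality $0$ satisfies $00\phi\vdash 0\phi$ and supports necessitation. No heavy machinery should be needed.

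For item (1), the idea is to combine the definition $X<Y \Leftrightarrow Y\vdash 0X$ with necessitation and transitivity of $0$. In the case $A<B\leq C$: if $A<B$ then $B\vdash 0A$, so by necessitation $0B\vdash 00A\vdash 0A$; if furthermore $B<C$ then $C\vdash 0B\vdash 0A$, and if $B\equiv C$ then directly $C\equiv B\vdash 0A$. The case $A\leq B<C$ is handled analogously, the equivalence case reducing to substitution inside a modality via $A\equiv B \Rightarrow 0A\equiv 0B$.

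For item (2), the $\Rightarrow$ direction uses necessitation in the same spirit: from $C\leq A$ we obtain $0A\vdash 0C$, either trivially (if $C\equiv A$) or because $A\vdash 0C$ yields $0A\vdash 00C\vdash 0C$. The $\Leftarrow$ direction is the crucial use of discreteness together with linearity of $<_0$: given $C<0A$, if $A<C$ held, then $A<C<0A$ would contradict Lemma \ref{theorem:ordersHaveMinimalElementAndAreDiscrete}.\ref{theorem:ordersHaveMinimalElementAndAreDiscrete:discrete}, so trichotomy forces $C\leq A$. Item (3) is then a one-line application of irreflexivity: if $B\vdash A$ but $B<A$, i.e.\ $A\vdash 0B$, then $B\vdash A\vdash 0B$ gives $B<B$, contradiction; linearity of $<_0$ then yields $A\leq_0 B$.

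For item (4), the $\Rightarrow$ direction is immediate from item (3). For $\Leftarrow$, assume $A\geq r(B)$. If $A\equiv r(B)$ we are done, so suppose $A>r(B)$, meaning $A\vdash 0r(B)$. The key observation — which is really the content of this item — is that $r(B)$ is either $\top$ or begins with a $0$, so that $0r(B)\vdash r(B)$: when $r(B)=\top$ this is trivial, and when $r(B)=0C$ this is $00C\vdash 0C$. In either case $A\vdash r(B)$. The only non-routine step in the whole lemma is thus noticing that the shape of $r(B)$ (never having a head of height $\geq 1$, by Lemma \ref{LemmNoHead}) is exactly what makes $0r(B)\vdash r(B)$ go through; everything else is transitivity, necessitation, and trichotomy.
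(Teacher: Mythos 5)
Your argument is correct and fills in exactly the elementary verification the paper leaves to the reader, using only irreflexivity, linearity of $<_0$ on $\overline\Worms$, the discreteness lemma, and the transitivity axiom $00\phi\vdash 0\phi$. Two minor points: the step you label ``necessitation'' is really the \RC monotonicity rule $\frac{\phi\vdash\psi}{0\phi\vdash 0\psi}$ (monotonicity of the diamond), not necessitation proper; and in item~(4) the appeal to Lemma~\ref{LemmNoHead} is superfluous, since the fact that $r(B)$ is $\top$ or begins with $0$ is immediate from the definition of $r=r_1$.
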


\proof
Easy and left to the reader.
\endproof

Note that we cannot reverse the implication of the third item since, for example, it is easy to check that $1<01$ but $01 \nvdash 1$. We shall use our previous lemma without explicit mention in the remainder of this paper.

\begin{lemma}\label{LemmOrd}\ 
\begin{enumerate}
\item $A<_0B$ if and only if one of the following occurs:\label{LemmOrdOne}
\begin{enumerate}
\item $A<_0 r(B)$ or\label{LemmOrdOneA}
\item $r(A)<_0 B$ and $h(A)<_0 h(B)$.\label{LemmOrdOneB}
\end{enumerate}
\item $A\leq_0B$ if and only if\label{LemmOrdTwo}
\begin{enumerate}
\item $A\leq_0 r(B)$ or\label{LemmOrdTwoA}
\item $r(A)\leq_0 B$ and $h(A)\leq_0 h(B)$.\label{LemmOrdTwoB}
\end{enumerate}

\end{enumerate}
\end{lemma}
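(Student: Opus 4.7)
The plan is to prove item 1 carefully and then derive item 2 by case-splitting on $A<_0 B$ versus $A\equiv B$.

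For item 1 $(\Leftarrow)$: Case (a) is immediate from $r(B)\leq_0 B$ (since $B\equiv h(B)\wedge r(B)$). For (b), note $h(A),h(B)\in\Worms_1$, so $h(A)<_0 h(B)$ upgrades to $h(A)<_1 h(B)$ by Lemma~\ref{theorem:uparrowInvarianceProperties}, giving $h(B)\vdash 1 h(A)$ and hence $B\vdash 1 h(A)$. Combining with $B\vdash 0 r(A)$ (from $r(A)<_0 B$) through Lemma~\ref{lemma:basicLemma}.\ref{lemma:basicLemma1} yields $B\vdash 1(h(A)\wedge 0 r(A))$. Using $0 r(A)\vdash r(A)$ (trivial if $r(A)=\top$; by $\alpha\alpha\phi\vdash\alpha\phi$ otherwise) and $A\equiv h(A)\wedge r(A)$, I conclude $B\vdash 1 A\vdash 0 A$.

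For item 1 $(\Rightarrow)$: Assume $A<_0 B$. First I show $r(A)<_0 B$ unconditionally: $A\vdash r(A)$ and necessitation turn $B\vdash 0 A$ into $B\vdash 0 r(A)$, with the $r(A)=\top$ case amounting to $B\neq\top$. It remains to prove $A<_0 r(B)$ or $h(A)<_0 h(B)$. Assume the former fails; by linearity of $<_0$ and $0 r(B)\vdash r(B)$ I upgrade $A\not<_0 r(B)$ to $A\vdash r(B)$. The heart of the proof is then to compare $h(A)$ and $h(B)$ inside the linearly ordered $\overline\Worms_1$ (Theorem~\ref{theorem:wormsIsomorphicToOrdinals}), ruling out two of the three cases. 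If $h(A)\equiv h(B)$, then $A\vdash h(A)\equiv h(B)$ together with $A\vdash r(B)$ yields $A\vdash h(B)\wedge r(B)\equiv B$, whence $B\leq_0 A$ by Lemma~\ref{LemmVdashLeq}, contradicting $A<_0 B$. If $h(A)>_1 h(B)$, then Lemma~\ref{lemma:reducingGeneralOrderToSpecialOrder} with $\xi=1$ applied to $(A,B)$ has both its conditions satisfied and delivers $A>_1 B$; hence $A\vdash 1 B\vdash 0 B$ and $B<_0 A$, again contradicting $A<_0 B$. Only $h(A)<_1 h(B)$ remains, which coincides with $h(A)<_0 h(B)$ via Lemma~\ref{theorem:uparrowInvarianceProperties}.

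For item 2 $(\Rightarrow)$: If $A<_0 B$, item 1 directly gives (a) or (b), which imply (a') or (b') respectively. If $A\equiv B$, applying Lemma~\ref{lemma:reducingGeneralOrderToSpecialOrder} with $\xi=1$ in both directions to $(A,B)$ forces $h(A)\equiv h(B)$ (since both $A>_1 B$ and $B>_1 A$ fail by irreflexivity, while $A\vdash r(B)$ and $B\vdash r(A)$ hold trivially), so $h(A)\leq_0 h(B)$, and $r(A)\leq_0 A\equiv B$ finishes (b'). For the $(\Leftarrow)$ direction of item 2, the case of two strict inequalities in (b') reduces to (b) of item 1; the mixed case $h(A)\equiv h(B)$ with $r(A)<_0 B$ is handled by applying item 1 contrapositively to $(B,A)$: conclusion (a) there would give $B<_0 r(A)$, contradicting $r(A)<_0 B$, while conclusion (b) would force $h(B)<_0 h(A)\equiv h(B)$, violating irreflexivity. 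The remaining mixed case with $r(A)\equiv B$ collapses to $A\equiv B$ via Lemma~\ref{LemmNoHead}.

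The main obstacle I anticipate is the $h(A)>_1 h(B)$ sub-case in item 1 $(\Rightarrow)$; Lemma~\ref{lemma:reducingGeneralOrderToSpecialOrder} dispatches it in essentially one line, which is where the asymmetry between $h$ and $r$ is decisive.
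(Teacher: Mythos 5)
Your proof of item~1 is essentially the paper's own argument: identical for the $(\Leftarrow)$ direction (both cases), and for $(\Rightarrow)$ the only variation is that in the sub-case $h(A)>_1 h(B)$ you invoke Lemma~\ref{lemma:reducingGeneralOrderToSpecialOrder} to obtain $A>_1 B$ in one step, where the paper derives $A\vdash 0A$ directly via the chain $A\vdash 1h(B)\wedge r(B)\vdash 1B\vdash 10A\vdash 0A$; both land in the same contradiction with irreflexivity. The one place you genuinely diverge is item~2, where the paper leaves a terse ``similar'' (presumably meaning: re-run the $\vdash$-chains with $\leq_0$), while you derive item~2 from item~1 by case-splitting on $A<_0 B$ versus $A\equiv B$ and exploiting linearity to exclude $B<_0 A$. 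This is a clean alternative that avoids rewriting the argument, at the price of needing Lemma~\ref{LemmNoHead} and Lemma~\ref{lemma:reducingGeneralOrderToSpecialOrder} (again with $\xi=1$) to handle the $A\equiv B$ case; I think it is a defensible and arguably more transparent route, and it also makes explicit something the paper doesn't bother to record, namely that $A\equiv B$ forces $h(A)\equiv h(B)$.

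One small gap: in your $(\Leftarrow)$ argument for item~2 you enumerate only the sub-cases of (b$'$); the disjunct (a$'$) — namely $A\leq_0 r(B)$ — is never addressed. It is trivial (since $B\vdash r(B)$ we get $r(B)\leq_0 B$ by Lemma~\ref{LemmVdashLeq}, and transitivity gives $A\leq_0 B$), but it should be stated for completeness.
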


\proof
We prove the first item and first focus on the $\Leftarrow$ direction omitting various 0-subscripts. In case $A<r(B)$ we get $B\vdash h(B) \wedge r(B) \vdash r(B)\vdash 0A$ so $A<B$.

So, now suppose that $r(A) < B$ and $h(A) < h(B)$. Clearly, $h(A) < h(B) \Leftrightarrow h(A) <_1 h(B)$, and since $0r(B) \vdash r(B)$ we get
\[
B \vdash h(B) \wedge B\vdash 1h(A) \wedge B\vdash 1h(A) \wedge 0r(A) \vdash 1h(A) \wedge r(A) \vdash 1h(A)r(A) \vdash 0A
\]
whence $A<B$.

For the $\Rightarrow$ direction, 
assume that $A<_0 B$; let us show that if \ref{LemmOrdOneA} fails, then \ref{LemmOrdOneB} holds. Clearly we have $r(A)<_0 B$ since $B \vdash 0A \vdash 0(h(A) \wedge r(A))\vdash 0r(A)$. We wish to see that $h(A)<h(B)$. Since by assumption \ref{LemmOrdOneA} fails, we have that $r(B)\leq_0 A$ whence $A\vdash r(B)$. Now suppose for a contradiction that $h(B) \leq h(A)$. In case $h(B)\equiv h(A)$ we get
\[
A\vdash h(A) \wedge A \vdash h(B) \wedge A \vdash h(B) \wedge r(B) \vdash B \vdash 0A,
\]
and in case $h(B)< h(A)$ then also $h(B)<_1 h(A)$ and we get
\[
A\vdash h(A) \wedge A \vdash 1h(B) \wedge A \vdash 1h(B) \wedge r(B) \vdash 1B \vdash 10A \vdash 00A \vdash 0A,
\]
contradicting the irreflexivity of $<$ so that $h(A) < h(B)$ as was to be shown.

The proof of Lemma \ref{LemmOrd}.\ref{LemmOrdTwo} is similar. \endproof

Note that when asking the question $A<_0B$ we may always assume that one of $A$ or $B$ contains a zero, since $A<_0 B \ \Leftrightarrow \ \alpha \downarrow A <_0 \alpha \downarrow B$ where $\alpha$ is the smallest ordinal appearing in $AB$. Thus, indeed, by induction on $|A| + |B|$ we see that this lemma provides a reduction of questions about $<_0$ to questions about $<_0$ where one of the arguments is either $\top$ or starts with a $0$.

\section{Worms and ordinals}\label{section:wormsAndOrdertypes}

In the previous section we introduced various orderings on the worms. In the current section we shall study the corresponding order-types.

\subsection{Well-founded orders and order types}

An important corollary to our reduction lemma, Lemma \ref{lemma:reducingGeneralOrderToSpecialOrder}, is that the $<_\alpha$ orders are well-founded.

\begin{corollary}\label{theorem:UnrestrictedOrderIsWellFounded}
The relation $<_\alpha$ on $\Worms \times \Worms$ is well-founded.
\end{corollary}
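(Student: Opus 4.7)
The plan is to derive well-foundedness of $<_\alpha$ on $\Worms$ from the fact, established in Theorem \ref{theorem:wormsIsomorphicToOrdinals}, that $\langle\overline{\Worms}_\alpha,<_\alpha\rangle$ is isomorphic to the ordinals and hence well-founded. The bridge between the general case and this restricted case is the reduction lemma, Lemma \ref{lemma:reducingGeneralOrderToSpecialOrder}, whose first conjunct says exactly that $A>_\alpha B$ forces a strict $<_\alpha$-descent on the $\alpha$-heads.

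Concretely, I would argue by contradiction. Suppose there is an infinite descending sequence
\[
A_0 \; >_\alpha \; A_1 \; >_\alpha \; A_2 \; >_\alpha \; \cdots
\]
of worms. By Lemma \ref{lemma:reducingGeneralOrderToSpecialOrder} applied with $\xi = \alpha$, each step yields $h_\alpha(A_i) >_\alpha h_\alpha(A_{i+1})$. Since $h_\alpha(A_i) \in \Worms_\alpha$ by Definition \ref{DefHead}, passing to equivalence classes produces an infinite descending chain
\[
\overline{h_\alpha(A_0)} \; >_\alpha \; \overline{h_\alpha(A_1)} \; >_\alpha \; \cdots
\]
inside $\langle\overline{\Worms}_\alpha,<_\alpha\rangle$. (The descent is strict rather than weak because $>_\alpha$ is irreflexive, as already observed right after Theorem \ref{theorem:wormsIsomorphicToOrdinals}.) Via the isomorphism with the ordinals, this contradicts the well-foundedness of $\ord$.

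There is really no obstacle here: the content of the corollary is entirely carried by Lemma \ref{lemma:reducingGeneralOrderToSpecialOrder}, and the only thing to check is that the descent on heads is genuine rather than weak, which comes for free from irreflexivity. The corollary therefore amounts to a one-line invocation of the reduction lemma together with Theorem \ref{theorem:wormsIsomorphicToOrdinals}.
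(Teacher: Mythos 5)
Your proof is correct and is essentially the same as the paper's: both pass from a hypothetical infinite $<_\alpha$-descending chain in $\Worms$ to a chain of $\alpha$-heads in $\Worms_\alpha$ via Lemma \ref{lemma:reducingGeneralOrderToSpecialOrder}, then invoke Theorem \ref{theorem:wormsIsomorphicToOrdinals} for the contradiction. (Your aside about irreflexivity is harmless but unnecessary, since the reduction lemma already delivers the strict inequality $h_\alpha(A_i) >_\alpha h_\alpha(A_{i+1})$ directly.)
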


\begin{proof}
Any hypothetical infinite descending $<_\alpha$-chain $A_0>_\alpha A_1>_\alpha A_2>_\alpha \hdots$ in $\Worms$ yields a corresponding chain $h_\alpha(A_0)>_\alpha h_\alpha(A_1)>_\alpha h_\alpha(A_2)>_\alpha \hdots$ in $\Worms_\alpha$ by Lemma \ref{lemma:reducingGeneralOrderToSpecialOrder}. This contradicts the fact that $<_\alpha$ defines a well-order on $\overline W_\alpha$ (Theorem \ref{theorem:wormsIsomorphicToOrdinals}).
\end{proof}

In virtue of this well-foundedness we can assign a $\xi${\em -order} to each worm by bar-recursion. In this subsection we shall revisit classical theory on how this assignment can be made in the general setting of well-founded orders. In the next subsection we shall apply the general results to the orders induced by the $<_\xi$ orderings.

\begin{definition}\label{definition:OmegaOrder}
If $X$ is either a set or a class and $\prec$ is a well-founded relation on $X$, we define $o_\prec\colon X\to{\sf On}$ by
\[
{\formerOmega}_\prec(x)=\sup_{y\prec x}({\formerOmega}_\prec(y) +1),
\]
where $\sup\varnothing=0$.
\end{definition}

Note that $o_\prec$ is well-defined due to the assumption that $\prec$ is well-founded.
In the case that $\prec$ is in addition a linear order, $o_\prec$ is in fact an isomorphism onto an initial segment of the ordinals. To make this precise, we first need an important definition. We shall identify the ordinal $\alpha$ with the set $\{  \beta \mid \beta < \alpha\}$ of ordinals $\beta$ less than alpha. 

\begin{definition}
If $\langle X,\prec\rangle$ is a well-order, define
\[
{\sf ot}_\prec(X)=\sup_{x\in X}(o_\prec(x)+1),
\]
where possibly ${\sf ot}_\prec(X)=\sf On$ if $X$ is a proper class.
\end{definition}

To formulate our result we shall use the following notation
\[
X^{\mathop\prec x}=\{y\in X\mid y\prec x\}.
\]

\begin{lemma}\label{LemmLinearIso}
For $\langle X,\prec\rangle$ a well-order, we have that $o_\prec\colon \langle X,\prec\rangle\to \langle {\sf ot}_\prec(X),<\rangle$ is an isomorphism. Moreover, if $x\in X$, then $o_\prec(x)={\sf ot}_\prec(X^{\mathop\prec x})$.
\end{lemma}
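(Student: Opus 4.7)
The plan is to handle the ``moreover'' clause first, as it is essentially a bookkeeping observation, and then leverage it to establish the isomorphism. Note that for any $y \in X^{\prec x}$, the set $\{z : z \prec y\}$ is contained in $X^{\prec x}$ (because $\prec$ is transitive on a linear well-order), so the value $o_\prec(y)$ does not depend on whether we view $y$ as an element of $X$ or of the sub-well-order $X^{\prec x}$. Consequently,
\[
{\sf ot}_\prec(X^{\prec x}) \;=\; \sup_{y \in X^{\prec x}}\bigl(o_\prec(y)+1\bigr) \;=\; \sup_{y \prec x}\bigl(o_\prec(y)+1\bigr) \;=\; o_\prec(x),
\]
which is the ``moreover'' part.

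Next I would establish that $o_\prec$ is strictly order-preserving: if $y \prec x$ then by definition $o_\prec(y) < o_\prec(y)+1 \le \sup_{z\prec x}(o_\prec(z)+1) = o_\prec(x)$. Combined with the linearity of $\prec$, this immediately yields injectivity (if $o_\prec(x) = o_\prec(y)$ then neither $x \prec y$ nor $y \prec x$, so $x=y$).

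The main remaining task is surjectivity of $o_\prec$ onto ${\sf ot}_\prec(X)$. I would prove by $\prec$-induction the strengthened statement
\[
\{o_\prec(y) : y \prec x\} \;=\; o_\prec(x)
\]
for every $x \in X$. The inclusion $\subseteq$ is just order-preservation. For $\supseteq$, fix $\alpha < o_\prec(x) = \sup_{y\prec x}(o_\prec(y)+1)$; then some $y \prec x$ satisfies $\alpha \le o_\prec(y)$. If $\alpha = o_\prec(y)$ we are done, and otherwise $\alpha < o_\prec(y)$, so the induction hypothesis applied to $y$ produces $z \prec y$ with $o_\prec(z) = \alpha$, and by transitivity $z \prec x$. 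Surjectivity onto ${\sf ot}_\prec(X) = \sup_{x}(o_\prec(x)+1)$ then follows: every $\alpha < {\sf ot}_\prec(X)$ satisfies $\alpha \le o_\prec(x)$ for some $x$, hence $\alpha \in \{o_\prec(y): y \prec x\} \cup \{o_\prec(x)\}$, so $\alpha$ lies in the image. This, together with order-preservation and injectivity, completes the proof that $o_\prec$ is an isomorphism.

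The only mildly delicate point is making sure that the induction for surjectivity is phrased at the level of predecessor-sets (proving $\{o_\prec(y) : y \prec x\} = o_\prec(x)$ as the inductive statement) rather than trying to prove surjectivity onto all of ${\sf ot}_\prec(X)$ directly; the latter would require an extra step when $X$ is a proper class, but the former reduces everything to a clean recursion on $\prec$.
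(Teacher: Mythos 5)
Your proof is correct. The main technical difference from the paper lies in how surjectivity is handled: the paper shows that the image $o_\prec(X)$ is an initial segment of the ordinals by an ordinal induction in contrapositive form (if $\alpha\notin o_\prec(X)$ then no $\beta\geq\alpha$ lies in $o_\prec(X)$), whereas you prove by $\prec$-induction the pointwise refinement $\{o_\prec(y):y\prec x\}=o_\prec(x)$. Both arguments hinge on the same inner step --- using $o_\prec(x)=\sup_{y\prec x}(o_\prec(y)+1)$ to drop a given $\alpha<o_\prec(x)$ below some $o_\prec(y)$ and then descend --- but your packaging is a bit tighter: it derives unboundedness and the initial-segment property simultaneously, and, as you observe, stays entirely inside recursion on $\prec$ without ever quantifying over all of ${\sf ot}_\prec(X)$, which is a mild advantage when $X$ is a proper class. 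You also reverse the order of presentation, proving the ``moreover'' clause first and using it as scaffolding, while the paper dispatches it at the end as an immediate unfolding of definitions; both treatments rest on the same observation that predecessor sets agree in $X$ and in $X^{\prec x}$.
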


\PreprintVersusPaper{\proof
The proof is easy and details can be found, e.g., in \cite{Jech:2002:SetTheory, Pohlers:2009:PTBook}. For the sake of a stand-alone presentation we give the argument here. From the definition of $o_\prec$ we see that $x\prec y$ implies that $o_\prec(x)<o_\prec(y)$, while if $o(x)<o(y)$, we cannot have $x\seq y$, hence by totality $x\prec y$. Thus $x\prec y$ if and only if $o_\prec(x)<o_\prec (y)$.

Clearly $o_\prec(X)$ is an unbounded subset of ${\sf ot}_\prec(X)$. It remains to show that $o_\prec(X)$ is an initial segment, that is, if $\alpha<\beta$ and $\alpha\not\in o_\prec(X)$, then $\beta\not\in o_\prec(X)$. To see this, suppose that $\alpha\not\in o_\prec(X)$ and work by induction on $\beta\geq \alpha$ to show that $\beta\not\in o_\prec(X)$. The base case follows from our assumption, so we may take $\beta>\alpha$. If $\beta=o_\prec(x)$, then since by definition $o_\prec(x) = \sup \{ o_\prec(y) +1 \mid y \prec x\}$, for all $\gamma\in [\alpha,\beta)$ there is $y\prec x$ with $o_\prec(y)+1>\gamma$. But then in particular, we can find $y$ for which we have that $\alpha\leq o_\prec(y)<\beta$, and by our induction hypothesis there can be no such $y$.

The last claim follows immediately by observing that the definitions of $o_\prec(x)$ and ${\sf ot}_\prec(X^{\mathop\prec x})$ yield the same ordinal.
\endproof}{The proof is easy and details can be found, e.g., in \cite{Jech:2002:SetTheory, Pohlers:2009:PTBook, FernandezJoosten:2012:WellOrders}.\endproof}

In general, while $o_\prec$ is defined for any well-founded relation, it has much nicer behavior when $\prec$ is linear. For example, both $o_\prec$ and $o_\prec^{-1}$ are continuous \PreprintVersusPaper{in the following sense:}{as expressed by the following easy lemma whose proof we omit.}

\begin{lemma}\label{theorem:oIsContinuous}
If $\langle X,\prec\rangle$ is a well-ordered set and $S\subset X$ is bounded (in $X$), then $S$ has a supremum and $o_\prec(\sup S) = \sup o_\prec(S)$. 

Likewise, $o_\prec^{-1}(\sup \Gamma) = \sup o_\prec^{-1}(\Gamma)$ for any set of ordinals $\Gamma$.
\end{lemma}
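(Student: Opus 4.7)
The plan is to leverage Lemma \ref{LemmLinearIso}, which gives that $o_\prec\colon \langle X,\prec\rangle \to \langle {\sf ot}_\prec(X),<\rangle$ is an order isomorphism onto an initial segment of the ordinals. Once the situation is transported to ordinals, suprema behave well, and both claims become instances of the general fact that an order isomorphism between a well-ordered set and an initial segment of ${\sf On}$ preserves suprema in both directions.

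More concretely, I would first handle the forward direction. Suppose $S\subset X$ is bounded, so fix $x\in X$ with $y\preccurlyeq x$ for every $y\in S$. By the order-preserving property from Lemma \ref{LemmLinearIso}, $o_\prec(S)$ is a set of ordinals bounded above by $o_\prec(x)$, so $\alpha:=\sup o_\prec(S)$ exists and satisfies $\alpha\leq o_\prec(x)$. Now $o_\prec(x)\in o_\prec(X)$, and since the proof of Lemma \ref{LemmLinearIso} shows that $o_\prec(X)$ is an initial segment of ${\sf ot}_\prec(X)$, we obtain $\alpha\in o_\prec(X)$; let $z=o_\prec^{-1}(\alpha)\in X$. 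Using the isomorphism again, $z$ is an upper bound of $S$ (since each $o_\prec(y)\leq\alpha=o_\prec(z)$ translates back to $y\preccurlyeq z$), and if $z'\in X$ were a strictly smaller upper bound, then $o_\prec(z')<\alpha$ would fail to be an upper bound of $o_\prec(S)$, a contradiction. Thus $z=\sup S$ and $o_\prec(\sup S)=\alpha=\sup o_\prec(S)$.

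For the second statement, given a bounded set of ordinals $\Gamma\subseteq o_\prec(X)$ we would apply the first part to $S:=o_\prec^{-1}(\Gamma)$, noting that $S$ is bounded in $X$ because $\sup\Gamma<{\sf ot}_\prec(X)$ lies in the initial segment $o_\prec(X)$, so $o_\prec^{-1}(\sup \Gamma)$ exists and bounds $S$. Substituting $\Gamma=o_\prec(S)$ into the identity $o_\prec(\sup S)=\sup o_\prec(S)$ and applying $o_\prec^{-1}$ then yields $o_\prec^{-1}(\sup\Gamma)=\sup o_\prec^{-1}(\Gamma)$.

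There is no real obstacle here; the only subtlety is ensuring that the candidate supremum $\alpha$ actually lies in the range of $o_\prec$, which is exactly where the ``initial segment'' clause of Lemma \ref{LemmLinearIso} is essential, and verifying that the implicit assumption $\Gamma\subseteq o_\prec(X)$ (or at least $\sup\Gamma\in o_\prec(X)$) is needed for the second half to even be meaningful.
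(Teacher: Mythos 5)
Your argument is correct and takes essentially the same route as the paper's, resting on the order isomorphism of Lemma \ref{LemmLinearIso}. Where you explicitly construct $\sup S$ as $o_\prec^{-1}(\sup o_\prec(S))$, the paper obtains its existence directly from well-orderedness and then rules out $\sup o_\prec(S) < o_\prec(\sup S)$ by contradiction, invoking the same surjectivity-onto-${\sf ot}_\prec(X)$ fact that lets you define $z$.
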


\PreprintVersusPaper{\begin{proof}
Since both $o_\prec$ is an isomorphism, it basically just renames the objects and the order. Consequently $o^{-1}_\prec$ is also an isomorphism and both preserve suprema. By way of exercise, let us spell out the details of this reasoning.

That $S$ has a supremum follows immediately from the fact that it is bounded and $\prec$ is well-ordered. Now, since $o_\prec$ is order-preserving, $\sup o_\prec(S) \leq o_\prec(\sup S)$. Suppose for a contradiction that $\sup o_\prec(S) < o_\prec(\sup S)$.  Since $o_\prec$ is a surjection onto ${\sf ot}_\prec(X)$, we can pick $x$ so that $o_\prec(x)= \sup o_\prec(S)$. Again, using that $o_\prec$ preserves the order we see that for each $y\in S$ we have $y\leq x< \sup S$ which contradicts that $\sup S$ is the {least} upper bound of $S$.

The same argument applies to establish the continuity of $o_\prec^{-1}$.
\end{proof}}{}

\subsection{Worms and their order-types}

We shall now see how the observations of the previous subsection apply to $\overline{\Worms}$. If $X=\Worms$ and $\mathord\prec=\mathord<_\xi$ for some ordinal $\xi$, we write $o_\xi$ instead of $o_{<_\xi}$, while if $X=\overline{\Worms}$ we will write $\oAlg_\xi$.

Instead of $o_0$ we shall just write $o$. When $S$ is a set or class we shall denote by $o_{\alpha}(S)$ the image of $S$ under $o_{\alpha}$. It is easy to see that, for any $A\in\Worms$, $\oAlg_\alpha(\bar A)=o_\alpha(A)$ and moreover $\oAlg_\alpha (\overline\Worms_\beta) = o_\alpha (\Worms_\beta)$ for any ordinals $\alpha, \beta$. We shall often refer to the $o_\xi$ as $\xi$-consistency order-types.

\begin{lemma}\label{theorem:oDefinesAnIsomorphism}
Given $A,B\in \Worms$, $A<B$ if and only if $o(A)<o(B)$, and the map $o:\Worms\to \ord$ is surjective. Moreover, the map $\oAlg: \ \langle\overline{\mathbb W},<_0\rangle \to \langle\ord,<\rangle$ is an isomorphism.
\end{lemma}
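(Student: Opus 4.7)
The plan is to assemble the statement from three facts that are already essentially in hand: linearity of $<_0$ on $\overline{\Worms}$, well-foundedness of $<_0$ on $\Worms$, and the definition of $o$ via bar-recursion on well-founded relations.

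First I would verify that $\oAlg$ is well-defined, i.e.\ that $A\equiv B$ implies $o(A)=o(B)$. Since $<_0$ is irreflexive (established immediately after Theorem~\ref{theorem:wormsIsomorphicToOrdinals}), $A\equiv B$ rules out both $A<B$ and $B<A$, so $A$ and $B$ have the same strict $<_0$-predecessors in $\Worms$; the recursive clause in Definition~\ref{definition:OmegaOrder} then forces $o(A)=o(B)$. This is what allows us to transfer everything between $o$ on $\Worms$ and $\oAlg$ on $\overline{\Worms}$.

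Next, I would note that $\langle\overline{\Worms},<_0\rangle$ is a well-ordered class: it is linearly ordered by Theorem~\ref{theorem:wormsIsomorphicToOrdinals} (with $\xi=0$, using $\Worms_0=\Worms$), and it is well-founded by Corollary~\ref{theorem:UnrestrictedOrderIsWellFounded} (a descending chain in $\overline{\Worms}$ lifts to one in $\Worms$). Applying Lemma~\ref{LemmLinearIso} to this well-order, $\oAlg$ is an order-isomorphism from $\langle\overline{\Worms},<_0\rangle$ onto the initial segment $\langle{\sf ot}_{<_0}(\overline{\Worms}),<\rangle$ of $\ord$. Combined with the well-definedness step above, this already gives the equivalence $A<B \Leftrightarrow o(A)<o(B)$ and reduces surjectivity of $o$ to the claim ${\sf ot}_{<_0}(\overline{\Worms})=\ord$.

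Finally, to settle that ${\sf ot}_{<_0}(\overline{\Worms})$ is all of $\ord$, I would invoke Theorem~\ref{theorem:wormsIsomorphicToOrdinals} once more: it asserts that $\langle\overline{\Worms},<_0\rangle$ is isomorphic to $\langle\ord,<\rangle$. An initial segment of $\ord$ that is isomorphic to $\ord$ itself must be all of $\ord$ (otherwise it would be some proper ordinal $\alpha$, which cannot be isomorphic to the proper class $\ord$). Hence $\oAlg$ is surjective onto $\ord$, and so is $o=\oAlg\circ(\cdot\mapsto\bar\cdot)$. The only mild subtlety is keeping the class/set distinction straight when applying Lemma~\ref{LemmLinearIso} in the proper-class setting, but since that lemma is already stated for classes via ${\sf ot}_\prec(X)={\sf On}$ as a possibility, no further work is needed.
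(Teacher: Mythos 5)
Your proof is correct and structurally parallels the paper's: both apply Lemma~\ref{LemmLinearIso} to the well-order $\langle\overline{\Worms},<_0\rangle$ to obtain that $\oAlg$ is an order-isomorphism onto an initial segment of $\ord$, and both then reduce the problem to showing that this initial segment is all of $\ord$. The divergence is in that last step: the paper establishes unboundedness concretely, by an induction showing $o(\langle\alpha\rangle)\geq\alpha$ for all $\alpha$, whereas you instead invoke the full strength of Theorem~\ref{theorem:wormsIsomorphicToOrdinals} (that $\langle\overline{\Worms},<_0\rangle\cong\ord$) together with the abstract observation that a proper initial segment of $\ord$ is a set and hence cannot be isomorphic to the proper class $\ord$. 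Your route is slightly more economical, reusing a theorem already cited, and sidesteps the explicit induction; the paper's route is more self-contained and incidentally produces the useful quantitative bound $o(\langle\alpha\rangle)\geq\alpha$, which has independent interest. Your opening well-definedness check (that $A\equiv B$ implies $o(A)=o(B)$) is a point the paper asserts without proof just before the lemma, so making it explicit is a harmless and arguably welcome addition.
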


\begin{proof}
Since $\langle \overline{\Worms},<_0\rangle$ is a well-order, we have by Lemma \ref{LemmLinearIso} that $\oAlg\colon \overline{\Worms}\to{\sf ot}_{<_0}(\overline{\Worms})$ is an isomorphism, and moreover since for $A,B\in\Worms$ we have that $A<_0 B$ if and only if $\overline A<_0\overline B$, it also follows that $A<_0 B$ if and only if $o(A)<o(B)$.\PreprintVersusPaper{

It remains to show that $\oAlg(\overline{\Worms})$ is unbounded in $\sf On$.}{It remains to show that $\oAlg(\overline{\Worms})$ is unbounded in $\sf On$.} But an easy induction shows that $o(\langle\alpha\rangle)\geq \alpha$ for all $\alpha$\PreprintVersusPaper{: $o(\la 0 \ra \top) = 1\geq 0$ (by Lemma \ref{theorem:ordersHaveMinimalElementAndAreDiscrete}), and $o(\la \alpha \ra \top) = \sup \{ o(B)+1 \mid B< \la \alpha \ra \top \} \geq \sup \{ o(\la \beta \ra \top)+1 \mid \beta<  \alpha  \}$ since $\{ \la \beta \ra \top \mid \beta < \alpha \} \subseteq \{ B\in \Worms \mid B<A \}$. By the induction hypothesis we see $\sup \{ o(\la \beta \ra \top)+1 \mid \beta<  \alpha  \} \geq \sup\{ \beta +1 \mid \beta < \alpha\} = \alpha$, so that $o(\la \alpha \ra \top) \geq \alpha$.}{.}
\end{proof}

Some elementary properties of the mappings $o_\xi$ are readily proven.

\begin{lemma}\label{LemmOrdBasic}\ 
\begin{enumerate}
\item\label{item:zero:LemmOrdBasic}
$o_\xi (\top) = 0$;

\item \label{LemmOrdBasicSucc}
$o_\xi (\xi A) = o_\xi (A)+1$;

\item \label{LemmOrdBasicNaturals}
$o_\xi(\xi^n) = n$.

\end{enumerate}
\end{lemma}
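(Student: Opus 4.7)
Item 1 is immediate from Definition~\ref{definition:OmegaOrder} and Lemma~\ref{theorem:ordersHaveMinimalElementAndAreDiscrete}.\ref{theorem:ordersHaveMinimalElementAndAreDiscrete:minimalElement}: since $\top$ has no $<_\xi$-predecessors, $o_\xi(\top)=\sup\varnothing=0$. Item 3 follows from items 1 and 2 by a routine induction on $n$. The substance therefore lies in item 2, whose lower bound $o_\xi(\xi A)\geq o_\xi(A)+1$ is immediate from $A<_\xi\xi A$ (since $\xi A\vdash\xi A$). The matching upper bound requires showing that $o_\xi(B)\leq o_\xi(A)$ for every $B<_\xi\xi A$, and this is where non-linearity of $<_\xi$ on $\Worms$ creates the difficulty, since there can be $B<_\xi\xi A$ that is $<_\xi$-incomparable with $A$.

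My plan is to first establish the auxiliary identity
\[
o_\xi(A)=o_\xi(h_\xi(A))\qquad\text{for every worm }A,
\]
which I shall refer to as $(\star)$, by well-founded induction on $A$ under $<_\xi$. The direction $o_\xi(h_\xi(A))\leq o_\xi(A)$ is easy: every $B<_\xi h_\xi(A)$ also satisfies $B<_\xi A$, because $A\vdash h_\xi(A)\vdash\xi B$. For the converse, take any $B<_\xi A$; the reduction lemma (Lemma~\ref{lemma:reducingGeneralOrderToSpecialOrder}) yields $h_\xi(B)<_\xi h_\xi(A)$, the inductive hypothesis applied to $B$ gives $o_\xi(B)=o_\xi(h_\xi(B))$, and the strict monotonicity of $o_\xi$ along $<_\xi$ built into Definition~\ref{definition:OmegaOrder} gives $o_\xi(h_\xi(B))<o_\xi(h_\xi(A))$; hence $o_\xi(B)+1\leq o_\xi(h_\xi(A))$, and taking the supremum over $B<_\xi A$ yields $o_\xi(A)\leq o_\xi(h_\xi(A))$.

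With $(\star)$ in hand I reduce item 2 to the linear setting. Since $h_\xi(\xi A)=\xi h_\xi(A)$ lies in $\Worms_\xi$, applying $(\star)$ to $\xi A$ gives $o_\xi(\xi A)=o_\xi(\xi h_\xi(A))$. For any $B<_\xi\xi h_\xi(A)$, the reduction lemma yields $h_\xi(B)<_\xi\xi h_\xi(A)$ inside the linearly ordered $\overline\Worms_\xi$, and Lemma~\ref{theorem:ordersHaveMinimalElementAndAreDiscrete}.\ref{theorem:ordersHaveMinimalElementAndAreDiscrete:discrete} forces either $h_\xi(B)<_\xi h_\xi(A)$ or $h_\xi(B)\equiv h_\xi(A)$; in either case $o_\xi(h_\xi(B))\leq o_\xi(h_\xi(A))$, and by $(\star)$ we conclude $o_\xi(B)=o_\xi(h_\xi(B))\leq o_\xi(h_\xi(A))=o_\xi(A)$. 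Taking the supremum gives $o_\xi(\xi h_\xi(A))\leq o_\xi(A)+1$, and combining with the trivial lower bound $o_\xi(\xi h_\xi(A))\geq o_\xi(h_\xi(A))+1=o_\xi(A)+1$ delivers the desired equality $o_\xi(\xi A)=o_\xi(A)+1$.

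The main obstacle is thus the auxiliary identity $(\star)$: it is the place where one must control the contribution of worms whose leading modality is strictly below $\xi$, thereby reducing the computation of $o_\xi$ on the non-linearly ordered $\Worms$ to the classical, linearly ordered computation on $\overline\Worms_\xi$. Once $(\star)$ is secured, the rest of item 2 is a mechanical combination of the reduction lemma with the discreteness statement of Lemma~\ref{theorem:ordersHaveMinimalElementAndAreDiscrete}.\ref{theorem:ordersHaveMinimalElementAndAreDiscrete:discrete}, and item 3 falls out by a direct induction on $n$.
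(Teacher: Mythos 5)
Your proof is correct, and it actually repairs a gap in the paper's own argument. The paper disposes of item~2 in one line by asserting, on the strength of Lemma~\ref{theorem:ordersHaveMinimalElementAndAreDiscrete}.\ref{theorem:ordersHaveMinimalElementAndAreDiscrete:discrete}, the set identity $\{B\mid B<_\xi \xi A\}=\{B\mid B<_\xi A\}\cup\{A\}$; but that identity is false on all of $\Worms$ precisely because $<_\xi$ is not linear there. For instance with $\xi=1$ and $A=\top$ one has $0<_1 1$ (since $1\equiv 10$) while $0$ is neither $<_1\top$ nor equivalent to $\top$, so $0$ lies in the left-hand set but not the right. What the discreteness lemma really excludes is only worms \emph{strictly between} $A$ and $\xi A$, and on a non-linear order that does not close the case. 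You correctly identify this as the crux and resolve it by proving the auxiliary identity $o_\xi(A)=o_\xi(h_\xi(A))$ -- which is exactly the first equality of the paper's later Lemma~\ref{theorem:OmegaReducesToO}, pulled forward and proved directly by well-founded induction using only Lemma~\ref{lemma:reducingGeneralOrderToSpecialOrder}. With that in hand you transfer the whole computation into $\overline\Worms_\xi$, where linearity does make the discreteness argument go through. Your verification of $(\star)$ is sound: the reduction lemma gives $h_\xi(B)<_\xi h_\xi(A)$ whenever $B<_\xi A$, the inductive hypothesis applies to $B$ since $B<_\xi A$, and the strict monotonicity of $o_\xi$ along $<_\xi$ is immediate from Definition~\ref{definition:OmegaOrder}. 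So this is a genuinely different and more careful route than the paper's: the paper's bound depends on a set identity that needs $\Worms_\xi$-linearity, whereas you explicitly reduce to $\Worms_\xi$ first. The cost is that you effectively anticipate Lemma~\ref{theorem:OmegaReducesToO}; the benefit is that you do not rely on an identity that fails. Items~1 and~3 you handle exactly as the paper does.
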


\begin{proof}
The first item is clear  (by Lemma \ref{theorem:ordersHaveMinimalElementAndAreDiscrete}.\ref{theorem:ordersHaveMinimalElementAndAreDiscrete:minimalElement}) since $\{ A \mid A <_\xi \top \} = \varnothing$ and $\sup \varnothing =0$.

For the second item, since $A<_\x \x A$ clearly $o_\x (A) +1 \leq o_\x (\x A)$. By Lemma \ref{theorem:ordersHaveMinimalElementAndAreDiscrete}.\ref{theorem:ordersHaveMinimalElementAndAreDiscrete:discrete}, we see that $\{ B \mid B<_\x \x A \} = \{ B \mid B<_\x A \} \cup \{ A \}$ so that actually $o_\x (A) +1 = o_\x (\x A)$.

Finally, Item \ref{LemmOrdBasicNaturals} follows from the other two by induction on $n$.
\end{proof}
 
 Let us conclude this subsection by identifying the `limit worms' and the `successor worms'. As usual, by ${\sf Succ}$ and ${\sf Lim}$ we denote the class of successor respectively limit ordinals.
\begin{lemma}\label{LemmLimitWorm} For any worm $A$ we have
\begin{enumerate}
\item $o(A) = 0 \Leftrightarrow A = \top$;
\item $o(A) \in {\sf Succ} \Leftrightarrow  h(A) = \top \neq A\Leftrightarrow  A= 0A'$ for some worm $A'$;
\item $o(A) \in {\sf Lim}  \Leftrightarrow  h(A) \neq \top \Leftrightarrow  A\neq \top \ \& \ A= \displaystyle\sup_{B<A} B.$
\end{enumerate}
\end{lemma}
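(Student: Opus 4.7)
The plan is to treat the three items in order, leveraging the fact that $o$ is an order-isomorphism between $\langle\overline\Worms,<_0\rangle$ and $\langle \ord,<\rangle$ (Lemma \ref{theorem:oDefinesAnIsomorphism}).

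For item 1, the $(\Leftarrow)$ direction is already Lemma \ref{LemmOrdBasic}.\ref{item:zero:LemmOrdBasic}. For $(\Rightarrow)$, I argue the contrapositive: if $A\not\equiv\top$, then since $\top$ is $<_0$-minimal (Lemma \ref{theorem:ordersHaveMinimalElementAndAreDiscrete}.\ref{theorem:ordersHaveMinimalElementAndAreDiscrete:minimalElement}) and $<_0$ linearly orders $\overline\Worms$ (Theorem \ref{theorem:wormsIsomorphicToOrdinals}), necessarily $\top<_0 A$, whence $o(A)\geq 1$.

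For item 2, I first record the purely syntactic equivalence $h(A)=\top\neq A \Leftrightarrow A=0A'$ for some worm $A'$: by the recursive definition of $h$ in Definition \ref{DefHead}, $h(\xi A'')=\top$ holds exactly when $\xi<1$, i.e. $\xi=0$, so together with $A\neq\top$ this characterizes worms of the form $0A'$. Then the equivalence with $o(A)\in\mathsf{Succ}$ splits as follows: if $A=0A'$, then $o(A)=o(0A')=o(A')+1$ by Lemma \ref{LemmOrdBasic}.\ref{LemmOrdBasicSucc}, a successor. Conversely, if $o(A)=\beta+1$, I use surjectivity of $o$ (Lemma \ref{theorem:oDefinesAnIsomorphism}) to pick a worm $B$ with $o(B)=\beta$; then $o(0B)=\beta+1=o(A)$, so $A\equiv 0B$. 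Since $0<1$, we have $r(0B)=0B$, so $A\equiv r(0B)$ and Lemma \ref{LemmNoHead} yields $h(A)=\top$; also $A\neq\top$ since $o(A)\geq 1$.

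Item 3 then drops out by an ordinal case split. Any ordinal is exactly one of $0$, a successor, or a limit, so the first equivalence in item 3 is simply the complement of items 1 and 2. For the final characterization, I invoke the continuity of $o$ (Lemma \ref{theorem:oIsContinuous}): when $A\neq\top$, the set $\{B\mid B<A\}$ is nonempty and bounded above by $A$, hence has a supremum in $\overline\Worms$ with $o(\sup_{B<A}B)=\sup_{B<A}o(B)$. If $o(A)=\gamma+1$ then this sup equals $\gamma<o(A)$, whereas if $o(A)$ is a limit then $\sup_{\beta<o(A)}\beta=o(A)$, so $A\equiv\sup_{B<A}B$ iff $o(A)\in\mathsf{Lim}$.

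The only real obstacle is the $(\Rightarrow)$ direction of item 2, where surjectivity of $o$ together with Lemma \ref{LemmNoHead} is needed to convert the ordinal-theoretic information ``$o(A)$ is a successor'' into the syntactic conclusion $h(A)=\top$; everything else is routine bookkeeping with the isomorphism $o$ and the already-established lemmata.
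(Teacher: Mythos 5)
Your proof is correct, and it takes a genuinely different route from the paper's in the two nontrivial items. The paper's proof for both the $\Rightarrow$ direction of item 2 and for item 3 rests on a single technical observation (labelled \eqref{equation:limitBehaviorOfWormsWithNonEmptyHead} in the paper): if $h(A)\neq\top$ and $B<_0 A$ then $0B<_0 A$, proved by the syntactic manipulation $A\vdash 1\wedge 0B\vdash 10B\vdash 00B$. You instead push the work onto the isomorphism $o$. For item 2 you use surjectivity plus injectivity of $o$ to produce $B$ with $A\equiv 0B$, and then apply Lemma \ref{LemmNoHead} to read off $h(A)=\top$; this is exactly the connection the paper foreshadows in the paragraph following Lemma \ref{LemmNoHead} (``any other way of writing $\alpha$ must end with a `$+1$'{}''), but which the paper's own proof does not actually invoke. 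For item 3 you replace the syntactic argument with the continuity of $o$ (Lemma \ref{theorem:oIsContinuous}) together with the observation that on the ordinal side $\sup\{\beta:\beta<\gamma\}=\gamma$ precisely when $\gamma\in\mathsf{Lim}$. Both approaches are sound; the paper's is more self-contained at the level of the modal calculus, while yours is more ordinal-theoretic in flavor and arguably reads more uniformly since it funnels everything through $o$ rather than reproving order-theoretic facts syntactically. One small notational point: you write the contrapositive hypothesis in item 1 as $A\not\equiv\top$, whereas the statement says $A\neq\top$; these are equivalent here (since $A\neq\top$ implies $A\vdash 0\top$ and hence $A\not\equiv\top$), but it is worth saying so explicitly, as you are matching a syntactic clause in the lemma against a semantic one.
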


\proof
The $\Leftarrow$ direction of Item $(1)$ is just Lemma \ref{LemmOrdBasic}.\ref{item:zero:LemmOrdBasic}. For the $\Rightarrow$ direction we see that if $A\neq \top$, then $A\vdash 0\top$ so that $o(A) > 0$.

The $o(A) \in {\sf Succ} \Leftarrow h(A) =\top \neq A$ direction of $(2)$ is already given in Lemma \ref{LemmOrdBasic}.\ref{LemmOrdBasicSucc}. The other direction follows from (1) in case $A=\top$ so we assume that $h(A)\neq\top$. If $o(A) \in {\sf Succ}$, then by definition of $o$ we have $o(A) = o(B) + 1$ for some $B<A$. Thus, it suffices to show that 
\begin{equation}\label{equation:limitBehaviorOfWormsWithNonEmptyHead}
\mbox{if $h(A)\neq \top $, and $B<A$ then $0B<A$. }
\end{equation}
Indeed, $h(A)\vdash 1$, hence $A\vdash 1\wedge 0B\vdash 10B\vdash 00B$, and $0B<A$, as claimed. The statement that $(h(A) = \top \neq A)\ \Leftrightarrow \ ( A= 0A' \mbox{ for some worm $A'$})$ is a mere syntactical triviality.

The $ o(A) \in {\sf Lim} \ \Leftrightarrow \ h(A) \neq \top$ equivalence of Item $(3)$ clearly follows from the other two items. Thus it remains to show that $o(A)\in \sf Lim$ if and only if $A\not=\top$ and $A=\sup_{B<A} B$. First assume that $o(A)\in \sf Lim$; then clearly $A\not=\top\neq h(A)$ and, by \eqref{equation:limitBehaviorOfWormsWithNonEmptyHead} if $B<A$, then $o(A)>o(B)+1=o(0B)$ so by Lemma \ref{theorem:oDefinesAnIsomorphism} $B<0B<A$; it follows that $A=\sup_{B<A}B$, since $\{B\in\Worms:B<A\}$ can have no smaller upper bound than $A$. 

Similarly, if $A\not=\top$ and $A=\sup_{B<A}B$, then given $C<A$ we have that $0C<A$ (as we cannot have that $0C\equiv A$), and thus $o(C)+1<o(A)$, which means that $o(A)\in\sf Lim$.
\endproof

Thus we will say that $A$ is a successor worm whenever $A=0A'$ for some worm $A'$, or a limit worm $A$ whenever $h(A) \neq \top$. 

\PreprintVersusPaper{In particular, in virtue of the lemma we can recast our reduction lemma of the $<_0$ ordering, Lemma \ref{LemmOrd}, to the extent that questions about the $<_0$ ordering can be reduced to the case where one of the arguments is not a limit worm.}{}

\subsection{Reducing the $\xi$-order-types}

In our reduction lemma, Lemma \ref{lemma:reducingGeneralOrderToSpecialOrder}, we saw how questions about the $<_\xi$ orderings could be reduced. This reduction could be viewed as a reduction to the $<_\zeta$ orderings restricted to the respective $\Worms_\zeta$'s as well as a reduction to the $<_0$ ordering. In this subsection we shall see that we have similar reductions for the order-types.

Let us temporarily introduce new orderings $\oOld_\xi$ which shall turn out to be just the restriction of $o_\xi$ to $\Worms_\xi$. 

\begin{definition}
For $A \in \Worms_\xi$ we define $\oOld_\xi(A)=
o_{<_\xi\upharpoonright \Worms_\xi}(A)$.
\end{definition}

Since we know that $<_\xi$ linearly orders $\overline \Worms_\xi$ we have an alternative characterization of $\oOld_{\xi}(A)$.

\begin{lemma}\label{theorem:alternativeCharacterizationTildeO}
Given $A\in\Worms_\xi$, $\oOld_{\xi}(A)={\sf ot}_{<_\xi}\{ \overline B \in \overline  \Worms_\xi \mid B <_{\xi} A\}.$
\end{lemma}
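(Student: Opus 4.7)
The plan is to reduce the statement directly to Lemma \ref{LemmLinearIso} applied to the well-order $\langle \overline{\Worms}_\xi, <_\xi\rangle$. The setup is ready: by Theorem \ref{theorem:wormsIsomorphicToOrdinals}, $\langle \overline{\Worms}_\xi, <_\xi\rangle$ is isomorphic to the class of all ordinals and is in particular a (class-sized) well-order, so the hypotheses of Lemma \ref{LemmLinearIso} are met with $X = \overline{\Worms}_\xi$ and $\mathord\prec = \mathord{<_\xi}$.

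The main step is to identify $\oOld_\xi(A)$ with $\oAlg_\xi(\overline{A})$, so that we can then invoke Lemma \ref{LemmLinearIso} on equivalence classes. First I would observe that, by definition, $\oOld_\xi(A) = o_{<_\xi \upharpoonright \Worms_\xi}(A)$ and this quantity depends only on the equivalence class $\overline A$, since $<_\xi$ is itself well-defined on $\overline \Worms_\xi$ and the set $\{B \in \Worms_\xi \mid B<_\xi A\}$ descends to $\{\overline B \in \overline \Worms_\xi \mid B<_\xi A\}$ under $\equiv$. A routine transfinite induction on $\oOld_\xi(A)$ along the recursion in Definition \ref{definition:OmegaOrder} then yields $\oOld_\xi(A) = \oAlg_\xi(\overline A)$.

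Now Lemma \ref{LemmLinearIso} applied to $\overline A \in \overline{\Worms}_\xi$ states exactly that
\[
\oAlg_\xi(\overline A) = {\sf ot}_{<_\xi}\bigl((\overline{\Worms}_\xi)^{\mathop{<_\xi} \overline A}\bigr) = {\sf ot}_{<_\xi}\{\overline B \in \overline{\Worms}_\xi \mid B <_\xi A\},
\]
which combined with $\oOld_\xi(A) = \oAlg_\xi(\overline A)$ gives the claim. There is no real obstacle: the content is entirely in Lemma \ref{LemmLinearIso} together with the passage to the quotient, and the only thing to check carefully is that equivalent worms in $\Worms_\xi$ induce the same strict initial segment and hence the same order-type under $<_\xi$.
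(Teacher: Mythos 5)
Your proof is correct and follows essentially the same route as the paper's: both pass from $\oOld_\xi$ on $\Worms_\xi$ to the quotient $\overline\Worms_\xi$ and then invoke Lemma~\ref{LemmLinearIso} applied to the well-order $\langle\overline\Worms_\xi,<_\xi\rangle$ furnished by Theorem~\ref{theorem:wormsIsomorphicToOrdinals}. The only cosmetic difference is that the paper also notes ${\sf ot}_{<_\xi}(\overline\Worms_\xi)=\sf On$ (unboundedness), which is not needed for the stated equality, while you are a bit more explicit about well-definedness on equivalence classes; both are minor.
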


\begin{proof}
By Theorem \ref{theorem:wormsIsomorphicToOrdinals} we know that $\la \overline \Worms_\xi, <_\xi \ra$ is well-ordered, thus by Lemma \ref{LemmLinearIso}, $\oOld_{\xi}\colon \overline{\Worms}_\xi\to {\sf ot}_{<_\xi}(\overline{\Worms}_\xi)$ is an isomorphism. It remains to check that ${\sf ot}_{<_\xi}(\overline{\Worms}_\xi)=\sf On$; but this readily follows by observing (by a straightforward induction) that $\oOld_\xi(\langle\xi+\alpha\rangle)\geq\alpha$.
\end{proof}

The next lemma tells us how the computation of $o_\xi$ can be reduced to computations of $\oOld_\xi$.

\begin{lemma}\label{theorem:OmegaReducesToO}
For any worm $A$ and ordinal $\x$ we have ${\formerOmega}_{\x}(A) =  o_{\x}h_{\x}(A) =   \oOld_{\x}h_{\x}(A)$. 
\end{lemma}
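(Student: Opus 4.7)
The plan is to prove both equalities together by well-founded induction on $<_\xi$, which is well-founded on $\Worms$ by Corollary \ref{theorem:UnrestrictedOrderIsWellFounded}. I will strengthen the inductive claim to the conjunction: for every worm $A$, one has $o_\xi(A) = o_\xi h_\xi(A)$, and additionally $o_\xi(A) = \oOld_\xi(A)$ whenever $A \in \Worms_\xi$. The theorem then follows by chaining the two pieces, the second applied at $H = h_\xi(A) \in \Worms_\xi$.

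The technical engine is the reduction lemma (Lemma \ref{lemma:reducingGeneralOrderToSpecialOrder}), which gives: (i) if $B <_\xi A$ then $h_\xi(B) <_\xi h_\xi(A)$, and (ii) since $A \equiv h_\xi(A) \wedge r_\xi(A)$ yields $A \vdash h_\xi(A)$, every $<_\xi$-predecessor of $h_\xi(A)$ is automatically a $<_\xi$-predecessor of $A$. Fact (ii) yields the $\geq$ inequality in $o_\xi(A) = o_\xi h_\xi(A)$ straight from the definition of $o_\xi$ as a sup over predecessors. For the $\leq$ direction, I fix $B <_\xi A$; by (i) we have $h_\xi(B) <_\xi h_\xi(A)$, and by the induction hypothesis $o_\xi(B) = o_\xi h_\xi(B)$, so $o_\xi(B)+1 \leq o_\xi h_\xi(A)$; taking the supremum over such $B$ gives $o_\xi(A) \leq o_\xi h_\xi(A)$.

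For the auxiliary equality at $H \in \Worms_\xi$, the $\geq$ direction is transparent, since any $C <_\xi H$ with $C \in \Worms_\xi$ is a fortiori a $<_\xi$-predecessor of $H$ in $\Worms$, and the induction hypothesis equates $o_\xi(C)$ with $\oOld_\xi(C)$. For the $\leq$ direction, given $B <_\xi H$ in $\Worms$, fact (i) yields $h_\xi(B) <_\xi h_\xi(H) = H$ with $h_\xi(B) \in \Worms_\xi$; the induction hypothesis applied to $B$ and then to $h_\xi(B)$ gives $o_\xi(B) = o_\xi h_\xi(B) = \oOld_\xi h_\xi(B)$, and this is bounded by $\oOld_\xi(H)$ by definition of $\oOld_\xi$.

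I do not anticipate any serious obstacle; the proof is essentially driven by the reduction lemma. The only bookkeeping to verify is that each invocation of the induction hypothesis is on a worm of strictly smaller $<_\xi$-rank. This is clear for $B <_\xi A$, and for $h_\xi(B)$ it follows from $h_\xi(B) <_\xi h_\xi(A)$ combined with the observation (from (ii)) that $h_\xi(A)$ has $<_\xi$-rank no larger than $A$, so $h_\xi(B)$ has strictly smaller $<_\xi$-rank than $A$.
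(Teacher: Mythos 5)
Your proof is correct and essentially mirrors the paper's: both rest on the reduction lemma (Lemma \ref{lemma:reducingGeneralOrderToSpecialOrder}) and induction along the well-founded relation $<_\xi$, the paper packaging the two directions of the reduction lemma into the set equality $\Worms_\xi^{<_\xi h_\xi(A)}=\{h_\xi(B):B<_\xi A\}$ and then chaining suprema, while you argue the two inequalities separately with a strengthened inductive claim. One cosmetic note: when you invoke the inductive hypothesis on $h_\xi(B)$ in the auxiliary claim, no rank argument is actually needed (and indeed, for well-founded induction on $<_\xi$ the hypothesis is available only for $<_\xi$-predecessors of $A$, not for arbitrary worms of smaller rank); fortunately, since $A\in\Worms_\xi$ there forces $h_\xi(A)=A$, you already have $h_\xi(B)<_\xi A$ outright.
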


\proof

Recall that for a partial order $\langle X,\prec\rangle$ and $x\in X$, we defined $X^{\prec x}=\{ y \in X \mid y \prec x \}$. We will write $\Worms_\xi^{<_\xi A}$ instead of $(\Worms_\xi)^{<_\x A}$.

We first see that
\begin{equation}\label{EqHead}
\Worms_\xi^{<_\xi h_\xi A} =h_\x\Worms^{<_\x A}:=\{ h_\x (B) \mid B<_\x A \}.
\end{equation}
For the $\subseteq$ inclusion, we fix some $C\in \Worms_\x$ and observe that $h_\x (C) = C$. Now, if $C<_\x h_\x(A)$ then clearly $A\vdash h_\x(A) \wedge r_\x(A) \vdash h_\x(A)\vdash \la \x \ra C$ whence also $C<_\x A$.

The other direction follows directly from Lemma \ref{lemma:reducingGeneralOrderToSpecialOrder} since $B<_\x A$ implies $h_\x(B) <_\x h_\x A$ and clearly $h_\x(B)\in \Worms_\x$. 

Now that we have this equality we proceed by induction on $A$ and obtain
\[
\begin{array}{lrrll}
 &{\formerOmega}_\x (A) \ \ & := & \sup_{B <_\x A} &\big({\formerOmega}_\x (B) +1\big) \\
\text{\sc ih}_{(B<_\xi A)} & & = & \sup_{B <_\x A} &\big(o_\x h_\x (B) +1 \big)\\
{\mbox{by } \eqref{EqHead}} & & = & \sup_{C\in \Worms_\x ^{<_\x h_\x(A)}} &\big(o_\x (C) +1 \big)\\
 \text{\sc ih}_{(C<_\xi h_\xi(A)\leq_\xi A)} & & = & \sup_{C\in \Worms_\x ^{<_\x h_\x(A)}}& \big( \oOld_\x (h_\x(C)) +1 \big) \\
 & & = & \sup_{C\in \Worms_\x ^{<_\x h_\x(A)}}&\big( \oOld_\x (C) +1 \big)\\
  & &= & & \oOld_\x h_\x (A).
\end{array}
\]
Likewise,
\[
\begin{array}{lrrll}
 &{\formerOmega}_\x (A) \ \ & = &  \sup_{ C\in \Worms_\x ^{<_\x h_\x(A)}}&\big(o_\x (C) +1 \big)\\
\mathsmaller{(h_\x A=h_\x h_\x A)} &  &= & \sup_{C\in \Worms_\x ^{<_\x h_\x h_\x(A)}}& \big(o_\x (C) +1 \big)\\
{\mbox{by } \eqref{EqHead}}  & & = & \sup_{C <_\x h_\x(A)} &\big(o_\x (h_\x (C)) +1 \big)\\ 
\text{\sc ih}_{(C<_\xi h_\xi (A)\leq_\xi A)} &  &= & \sup_{C <_\x h_\x(A)} &\big( o_\x (C) +1 \big)\\
  &  &= && o_\x h_\x (A).
\end{array}
\]
\qed

Indeed, this lemma yields a reduction of questions about the orders $o_\xi$ defined on $\Worms$ to questions about the orders $\oOld_\xi$ which are defined on $\Worms_\xi$. In particular, we see that $\oOld_\xi$ is just the restriction of $o_\xi$ to $\Worms_\xi$.

\begin{corollary}\label{theorem:oOrderRestrictedToWormsXiIsTildeO}
For $A\in \Worms_\xi$ we have that $o_\xi(A) = \oOld_\xi (A)$.
\end{corollary}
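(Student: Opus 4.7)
The plan is to invoke the preceding Lemma \ref{theorem:OmegaReducesToO} directly. That lemma establishes, for any worm $A$ and ordinal $\xi$, the chain of equalities $o_\xi(A) = o_\xi h_\xi(A) = \oOld_\xi h_\xi(A)$. So the only thing left to observe is that when $A \in \Worms_\xi$, the $\xi$-head operation is the identity.

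First I would unwind Definition \ref{DefHead}: if $A \in \Worms_\xi$, then every modality appearing in $A$ is at least $\xi$, so a straightforward induction on $|A|$ shows that $h_\xi(A) = A$. (The base case $A = \top$ is immediate, and the inductive step $A = \zeta B$ with $\zeta \geq \xi$ and $B \in \Worms_\xi$ gives $h_\xi(\zeta B) = \zeta h_\xi(B) = \zeta B = A$.)

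Substituting $h_\xi(A) = A$ into the conclusion of Lemma \ref{theorem:OmegaReducesToO} yields $o_\xi(A) = \oOld_\xi(A)$, which is the desired equality. There is no real obstacle here; the corollary is essentially a one-line consequence of the reduction lemma, packaged to make explicit that $\oOld_\xi$ is precisely the restriction of $o_\xi$ to $\Worms_\xi$.
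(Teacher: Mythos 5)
Your proof is correct and matches the paper's, which likewise derives the corollary directly from Lemma \ref{theorem:OmegaReducesToO} together with the observation that $h_\xi(A)=A$ whenever $A\in\Worms_\xi$. The only difference is that you spell out the (easy) induction for $h_\xi(A)=A$, which the paper leaves implicit.
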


\begin{proof}
Immediate from Lemma \ref{theorem:OmegaReducesToO} since $h_\xi (A) = A$ for $A \in \Worms_\xi$.
\end{proof}

We now also obtain an alternative characterization of $o_\xi (A)$: If $\langle X,\prec\rangle$ is a partially ordered set (or class), a {\em $\prec$-chain} in $X$ is any subset $\mathcal C$ of $X$ which is linearly ordered by $\prec$.  
We reserve $\mathcal C$ to denote chains. Given $x\in X$ we will write $\mathcal C\prec x$ if $x$ is a strict upper bound for $\mathcal C$, we say that $\mathcal C$ is a {\em chain below $x$.}

It is straightforward to check that if $\mathcal C\prec x$ then ${\sf ot}_\prec(\mathcal C)\leq o_\prec(x)$. However, it may be that ${\sf ot}_\prec(\mathcal C)$ is always much smaller than $o_\prec(x)$ \PreprintVersusPaper{}{as is expressed in the following lemma which is folklore}.

\begin{lemma}
Given an ordinal $\gamma$, there exists a partially ordered set $\langle X,\prec\rangle$ and $x\in X$ so that $o_\prec(x)=\gamma$ but every $\prec$-chain below $x$ is finite.
\end{lemma}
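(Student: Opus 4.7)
The plan is to exhibit a tree-like poset built from finite strictly descending sequences of ordinals. Specifically, let
\[
X=\{()\} \cup \{(\alpha_1,\ldots,\alpha_n) : n\geq 1, \ \gamma>\alpha_1>\cdots>\alpha_n\},
\]
where $()$ denotes the empty sequence, and define $\sigma\prec\tau$ if and only if $\tau$ is a proper prefix of $\sigma$ (equivalently, $\sigma$ is a proper end-extension of $\tau$). Take $x=()$.

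First I would verify well-foundedness and, more strongly, that every $\prec$-chain below $x$ is finite. Any such chain $\tau_0\succ\tau_1\succ\tau_2\succ\cdots$ consists of sequences each a proper prefix of the next, so the tuples $\tau_i$ have strictly increasing length while sharing a common sufficiently long initial segment. Writing $\tau_1=(\alpha_1,\ldots,\alpha_k)$, each further $\tau_i$ determines a new entry which must be strictly less than the last entry of $\tau_{i-1}$, producing a strictly decreasing sequence of ordinals bounded by $\alpha_k$; hence the chain must terminate. In particular chains below $x$ are finite.

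Next I would compute $o_\prec$ by induction. The key claim is
\[
o_\prec\bigl((\alpha_1,\ldots,\alpha_n)\bigr)=\alpha_n
\]
for $n\geq 1$. The set of elements $\prec\tau$ is exactly the set of proper extensions of $\tau$, and since extensions of $\tau\frown(\beta)$ have strictly smaller $o_\prec$ than $\tau\frown(\beta)$ itself, the supremum reduces to the one-step children:
\[
o_\prec(\tau)=\sup_{\beta<\alpha_n}\bigl(o_\prec(\tau\frown(\beta))+1\bigr).
\]
The base case $\alpha_n=0$ gives a leaf with $o_\prec=0$. The successor and limit cases both yield $o_\prec(\tau)=\alpha_n$ by the induction hypothesis, exactly as in the standard computation of ordinal suprema. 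Finally, applying the same recursion at the root,
\[
o_\prec(x)=o_\prec\bigl(()\bigr)=\sup_{\alpha<\gamma}\bigl(o_\prec((\alpha))+1\bigr)=\sup_{\alpha<\gamma}(\alpha+1)=\gamma,
\]
which completes the argument.

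There is essentially no serious obstacle here; the only point that demands a small amount of care is making sure the order is oriented so that the empty sequence (root of the tree) is the top element, so that descending $\prec$-chains correspond to growing sequences of tuples, whose length is bounded by the first entry chosen and therefore finite. Once the orientation is fixed, well-foundedness, finiteness of chains below $x$, and the ordinal rank computation all follow by straightforward induction.
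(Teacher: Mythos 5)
Your proof is correct, and it takes a genuinely different route from the paper. The paper argues by transfinite induction on $\gamma$: the case $\gamma=0$ is a singleton, the successor case is handled by adding one new point below a previously constructed poset, and the limit case by taking a disjoint union of the posets for all $\xi<\gamma$ and adjoining a fresh minimum. You instead give a single explicit construction, the tree of finite strictly decreasing sequences of ordinals below $\gamma$ ordered by reverse prefix, with $x$ the empty sequence; the rank computation $o_\prec((\alpha_1,\dots,\alpha_n))=\alpha_n$ then gives $o_\prec(x)=\gamma$ directly. Your construction is self-contained and avoids any need to stitch together posets or track an ambient induction hypothesis; the paper's inductive construction is perhaps more in line with how one would prove a more general closure property. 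Two small remarks. First, when you argue that chains below $x$ are finite, the relevant point is not really that ``the length is bounded by the first entry chosen'' (that is literally false when the first entry is infinite): the correct observation, which you also state, is that the final entries of the tuples in a chain form a strictly decreasing sequence of ordinals, hence the chain is finite by well-foundedness of $\sf On$. Second, to justify reducing the supremum defining $o_\prec(\tau)$ to the one-step children $\tau\frown(\beta)$, you implicitly use that $o_\prec$ is strictly monotone along $\prec$, which follows at once from the definition; it would be worth saying this explicitly, but it is not a gap.
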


\PreprintVersusPaper{
\proof[Proof sketch.]
We proceed by induction on $\gamma$. If $\gamma=0$, we may take $X$ to be a singleton. If $\gamma=\gamma'+1$, by induction hypothesis there is $\langle X',\prec'\rangle$ such that every $\prec'$-chain is finite yet there is a unique minimal point $x'\in X'$ with $o_{\prec'}(x')=\gamma'$. We then obtain $\langle X,\prec\rangle$ by adding a new minimal point $x$ which is $\prec$-below $X'$. It is clear that $o(x)=\gamma$, and any $\prec$-chain is merely a $\prec'$-chain, perhaps with $x$ added, hence finite.

Meanwhile, if $\gamma\in \sf Lim$, we have by induction hypothesis for each $\xi<\gamma$ a partiall ordered set $\langle X_\xi,\prec_\xi\rangle$ each with a unique minimal point $x_\xi$ with $o_{\prec_\xi}(x_\xi)=\xi$ and such that all $\prec_\xi$-chains are finite. Without loss of generality we may assume the sets $X_\xi$ to be all disjoint. Then we pick a fresh point $x_\gamma\not\in\bigcup_{\xi<\gamma}X_\xi$ and let $X_\gamma=\{x_\gamma\}\cup\bigcup_{\xi<\gamma}X_\xi$, and let $\prec_\gamma$ respect all previous orderings between elements except that $x_\gamma$ is the new minimum. As before, it is straightforward to check that $o_{\prec_\gamma}(x_\gamma)=\gamma$, yet as before there can be no infinite $\prec_\gamma$-chains.
\endproof
}
Thus it may not be the case that $o_\prec(x)$ is `attained'. To be precise, given a partially ordered set $\langle X,\prec\rangle$ and $x\in X$, say that {\em $o_\prec(x)$ is attained} if there is a chain $\mathcal C\prec x$ such that ${\sf ot}_\prec (\mathcal C)=o_\prec(x)$.

\begin{theorem}
If $A\in \Worms$ and $\xi$ is any ordinal, then
\[
o_\xi(A) = \sup_{\mathcal C <_\xi A} {\sf ot}_\xi (\mathcal C).
\]
Moreover, $o_\xi(A)$ is attained in $\la \Worms_\xi, <_\xi \ra$.
\end{theorem}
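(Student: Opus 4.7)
The plan is to establish the two inequalities separately, with the ``$\geq$'' direction being witnessed by an explicit chain which then also establishes the attainment claim.

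For the upper bound $\sup_{\mathcal C <_\xi A}{\sf ot}_\xi(\mathcal C) \leq o_\xi(A)$, I would use the general observation, immediate from Definition \ref{definition:OmegaOrder}, that $o_\xi$ is strictly order-preserving on $\langle\Worms, <_\xi\rangle$: if $B <_\xi B'$ then $o_\xi(B) + 1 \leq o_\xi(B')$. Given any chain $\mathcal C <_\xi A$, the elements of $\mathcal C$ are pairwise distinct and $<_\xi$-comparable, hence $B \mapsto o_\xi(B)$ is a strictly increasing embedding of $\langle\mathcal C, <_\xi\rangle$ into the ordinal $o_\xi(A)$, so ${\sf ot}_\xi(\mathcal C) \leq o_\xi(A)$.

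For the lower bound and the attainment, I would exhibit a specific witness chain $\mathcal C \subseteq \Worms_\xi$. Using Lemma \ref{theorem:OmegaReducesToO} and Lemma \ref{theorem:alternativeCharacterizationTildeO}, we know that
\[
o_\xi(A) \;=\; \oOld_\xi h_\xi(A) \;=\; {\sf ot}_{<_\xi}\bigl\{ \overline B \in \overline\Worms_\xi \,:\, B <_\xi h_\xi(A)\bigr\}.
\]
Using choice, pick a function $\tau$ that selects one representative from each $\equiv$-class of worms, and set
\[
\mathcal C \;=\; \bigl\{\,\tau(\overline B) \,:\, B\in\Worms_\xi \text{ and } B <_\xi h_\xi(A)\,\bigr\} \;\subseteq\; \Worms_\xi.
\]
Three things need checking. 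First, $\mathcal C$ is a $<_\xi$-chain: since $\langle\overline\Worms_\xi, <_\xi\rangle$ is linearly ordered (Theorem \ref{theorem:wormsIsomorphicToOrdinals}) and $\tau$ picks unique representatives, any two distinct elements of $\mathcal C$ correspond to distinct equivalence classes which are $<_\xi$-comparable, and this transfers to $\mathcal C$ itself. Second, $\mathcal C <_\xi A$: this is precisely the $\subseteq$-inclusion of \eqref{EqHead}, since for any $B \in \mathcal C$ we have $A \vdash h_\xi(A) \wedge r_\xi(A) \vdash h_\xi(A) \vdash \langle\xi\rangle B$. Third, ${\sf ot}_\xi(\mathcal C) = o_\xi(A)$: the map $\mathcal C \to \overline\Worms_\xi^{<_\xi h_\xi(A)}$ given by $B \mapsto \overline B$ is an order-preserving bijection, so
\[
{\sf ot}_\xi(\mathcal C) \;=\; {\sf ot}_{<_\xi}\bigl(\overline\Worms_\xi^{<_\xi h_\xi(A)}\bigr) \;=\; \oOld_\xi h_\xi(A) \;=\; o_\xi(A).
\]

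Combining both bounds yields the displayed equality, and the chain $\mathcal C$ constructed above simultaneously realizes the supremum, proving that $o_\xi(A)$ is attained in $\langle\Worms_\xi, <_\xi\rangle$. There is no real obstacle here: the work has already been done in Lemmas \ref{theorem:OmegaReducesToO} and \ref{theorem:alternativeCharacterizationTildeO} together with equation \eqref{EqHead}. The only subtle point to be careful about is distinguishing between worms and their $\equiv$-classes when producing a genuine chain (i.e.\ a $<_\xi$-linearly ordered subset of $\Worms_\xi$ rather than of $\overline\Worms_\xi$), which is handled by the choice function $\tau$.
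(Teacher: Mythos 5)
Your proof is correct and follows essentially the same route as the paper: exhibit the chain of worms in $\Worms_\xi$ below $h_\xi(A)$ and identify its order type with $o_\xi(A)$ via Lemma \ref{theorem:OmegaReducesToO}. The one genuine difference is that you are more careful than the paper at a subtle point. The paper takes $\mathcal C=\{B\in\Worms_\xi:B<_\xi h_\xi(A)\}$ and asserts it is a chain because ``$\Worms_\xi$ is well-ordered''; but strictly speaking it is $\overline\Worms_\xi$, not $\Worms_\xi$, that is well-ordered by $<_\xi$ (Theorem \ref{theorem:wormsIsomorphicToOrdinals}), and the paper's $\mathcal C$ contains syntactically distinct but $\equiv$-equivalent worms (e.g.\ $1$ and $10$) which are $<_\xi$-incomparable, so it is not literally a chain in $\langle\Worms_\xi,<_\xi\rangle$. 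Your choice function $\tau$ selecting one representative per $\equiv$-class is exactly the repair needed to make the witness a genuine chain of worms, which is what the theorem's attainment claim demands. You also spell out the easy upper bound (that $\mathcal C<_\xi A$ forces ${\sf ot}_\xi(\mathcal C)\leq o_\xi(A)$), which the paper delegates to a remark made earlier in the text. In substance the two proofs use the same key lemmas and the same witness set; yours is the more precise rendering.
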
 

\begin{proof}
To prove that $o_\xi(A) = \sup_{\mathcal C <_\xi A} {\sf ot}_\xi (\mathcal C)$, clearly it suffices that $o_\xi(A)$ is attained in $\la \Worms_\xi, <_\xi \ra$. 

Let $A\in \Worms$ and define $\mathcal C=\Worms_\xi^{<_\xi h_\xi(A)}=\{B\in \Worms_\xi:B<_\xi h_\xi(A)\}$. By Theorem \ref{theorem:wormsIsomorphicToOrdinals}, $\Worms_\xi$ is well-ordered and hence $\mathcal C$ is a chain; clearly also $\mathcal C<_\xi A$. Meanwhile, by Lemma \ref{theorem:OmegaReducesToO}, we have that $o_\xi(A)=\oOld_\xi h_\xi(A)$; however, by Lemma \ref{LemmLinearIso}, $\oOld_\xi h_\xi(A)={\sf ot}_{<_\xi}\mathcal C$. We conclude that $\mathcal C$ is a $<_\xi$-chain below $A$ with $o_\xi(A)={\sf ot}_{<_\xi}\mathcal C$, and thus $o_\xi(A)$ is attained.\end{proof}

Just like the reduction lemma for the $<_\x$ orderings yielded a reduction to $<_0$, by an additional lemma we shall see that Lemma \ref{theorem:OmegaReducesToO} also provides a reduction of $o_\x$ to $o_0$.

\begin{lemma}\label{theorem:LoweringSalphaOrders}
For $A \in \Worms_\xi$ we have $o_{\xi}(A) = o(\xi \downarrow A)$.
\end{lemma}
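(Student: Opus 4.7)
The plan is to reduce the statement to the already-established fact that $\xi{\uparrow}$ is an isomorphism between $(\Worms,<)$ and $(\Worms_\xi,<_\xi)$, and then invoke the general principle that isomorphisms of well-orders preserve order-types.

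First, I would apply Corollary \ref{theorem:oOrderRestrictedToWormsXiIsTildeO}: since $A \in \Worms_\xi$, we have $o_\xi(A) = \oOld_\xi(A)$. So it suffices to prove $\oOld_\xi(A) = o(\xi \downarrow A)$. Now let $B = \xi \downarrow A$; by Lemma \ref{theorem:uparrowProperties}.\ref{inverse} applied with $\beta = 0$ we get $\xi \uparrow B = A$, so the goal becomes $\oOld_\xi(\xi \uparrow B) = o(B)$.

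Next, I would prove this by transfinite induction on $o(B)$. By Lemma \ref{theorem:IsomorphicWormFragments}, $\xi{\uparrow}$ is an order isomorphism, so it restricts to a bijection
\[
\xi{\uparrow} \colon \{B' \in \Worms : B' < B\} \;\longrightarrow\; \{A' \in \Worms_\xi : A' <_\xi A\},
\]
using Lemma \ref{theorem:uparrowInvarianceProperties} both to see that the image lies in the indicated set and to obtain surjectivity (any $A' <_\xi A$ in $\Worms_\xi$ is of the form $\xi\uparrow B'$ with $B' = \xi \downarrow A' < B$). Applying the inductive hypothesis to each such $B'$ gives $\oOld_\xi(\xi \uparrow B') = o(B')$, and therefore
\[
\oOld_\xi(\xi \uparrow B) \;=\; \sup_{B' < B} \bigl(\oOld_\xi(\xi \uparrow B') + 1\bigr) \;=\; \sup_{B' < B}\bigl(o(B')+1\bigr) \;=\; o(B),
\]
which is what we need. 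Chaining the two equalities yields $o_\xi(A) = o(\xi \downarrow A)$.

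I do not anticipate any serious obstacle: once the correspondence between the down-sets is observed (which is immediate from Lemma \ref{theorem:uparrowInvarianceProperties}), the rest is just the bar-recursive definition of $o_\prec$ commuting with order isomorphisms. The only minor point to be careful about is that $\oOld_\xi$ is defined relative to $<_\xi$ restricted to $\Worms_\xi$, while $o$ is defined on all of $\Worms$; but this is precisely why one first invokes Corollary \ref{theorem:oOrderRestrictedToWormsXiIsTildeO} to pass from $o_\xi$ to $\oOld_\xi$ before exploiting the isomorphism.
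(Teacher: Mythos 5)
Your proposal is correct and follows essentially the same route as the paper's proof: both reduce $o_\xi$ to $\oOld_\xi$ (via Corollary \ref{theorem:oOrderRestrictedToWormsXiIsTildeO}, equivalently Lemma \ref{theorem:OmegaReducesToO}), then establish $\oOld_\xi(\xi\uparrow B)=o(B)$ by well-founded induction using the order-isomorphism $\xi\uparrow$ from Lemmas \ref{theorem:uparrowInvarianceProperties} and \ref{theorem:IsomorphicWormFragments}. The bijection of down-sets you describe is precisely the set equality \eqref{EqPlus} in the paper's argument, so the two proofs coincide in all essentials.
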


\proof
By Lemma \ref{theorem:uparrowProperties} it suffices to prove that for any worm $A \in \Worms$ we have $o_{\xi}(\xi \uparrow A) = o(A)$. Now, since $\xi \uparrow B <_\xi \xi \uparrow A \ \Leftrightarrow \ B<A$ (this is Lemma \ref{theorem:uparrowInvarianceProperties}) and since each $B\in \Worms$ is equal to $\xi \downarrow(\xi \uparrow B)$, we have that for each worm $A$
\begin{equation}\label{EqPlus}
\{ B \in \Worms \mid B<A \} = \{ \xi \downarrow C \mid C \in \Worms_\xi \wedge C<_\xi \xi \uparrow A \}.
\end{equation}
We will now show by induction on $<_0$ that for any worm $A$ we have $o(A) = \oOld_\xi (\xi \uparrow A)$:
\[
\begin{array}{llrll}
 & o (A) & = & \sup_{B<A}& \big(o (B) + 1 \big) \\
\text{\sc ih}& &  = & \sup_{B<A}& \big(\oOld_\xi (\xi \uparrow B) + 1 \big) \\
\mbox{by \eqref{EqPlus}}\ \ \ \ \ \ \ \ &  & = & \sup_{C \in \Worms_\xi^{<_\xi \xi \uparrow A}}& \big(\oOld_\xi (\xi \uparrow (\xi \downarrow C)) + 1 \big) \\
 & & = & \sup_{C \in \Worms_\xi^{<_\xi \xi \uparrow A}} &\big( \oOld_\xi ( C) + 1 \big)\\
 & & = && \oOld_\xi (\xi \uparrow A).\\
\end{array}
\]
We conclude our proof using Lemma \ref{theorem:OmegaReducesToO} to see that 
\[
o (A) = \oOld_\xi (\xi \uparrow A)  = o_\xi (h_\xi(\xi \uparrow A)) = o_\xi (\xi \uparrow A).
\]
\endproof
The reduction of $o_\xi$ to $o$ follows by combining Lemma \ref{theorem:LoweringSalphaOrders} and Lemma \ref{theorem:OmegaReducesToO}.

\begin{corollary}\label{corollary:reductionXiOrdertypeToPlainOrdertype}
For any worm $A$ and ordinal $\xi$ we have $o_\xi (A) = o (\xi \downarrow h_\xi (A))$.
\end{corollary}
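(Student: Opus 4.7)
The plan is to simply chain the two lemmas the text itself points at. By Lemma \ref{theorem:OmegaReducesToO} applied to the worm $A$ and ordinal $\xi$, we get the first reduction
\[
o_\xi(A) = o_\xi h_\xi(A),
\]
which collapses the arbitrary worm $A$ to its $\xi$-head $h_\xi(A)$, an element of $\Worms_\xi$. This is the step that lets us forget the tail $r_\xi(A)$ and work only inside $\Worms_\xi$.

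Next, since $h_\xi(A) \in \Worms_\xi$, the operation $\xi \downarrow$ is well-defined on it by Lemma \ref{theorem:BasicPropertiesOrdinalArithmetic} (together with the remarks preceding the definition of $\alpha \downarrow$). Hence we may apply Lemma \ref{theorem:LoweringSalphaOrders} with $h_\xi(A)$ in the role of $A$, obtaining
\[
o_\xi h_\xi(A) = o\bigl(\xi \downarrow h_\xi(A)\bigr).
\]
Stringing the two equalities together yields $o_\xi(A) = o(\xi \downarrow h_\xi(A))$, which is the claim.

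There is essentially no obstacle: both ingredients have already been proved, and the well-definedness condition for $\xi \downarrow$ is precisely the defining property of $h_\xi(A)$. The only tiny thing worth flagging in the write-up is the justification that $h_\xi(A) \in \Worms_\xi$, which is immediate from Definition \ref{DefHead}.
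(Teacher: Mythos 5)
Your proof is correct and follows precisely the route the paper intends: first reduce $o_\xi(A)$ to $o_\xi h_\xi(A)$ via Lemma \ref{theorem:OmegaReducesToO}, then apply Lemma \ref{theorem:LoweringSalphaOrders} to $h_\xi(A)\in\Worms_\xi$ to land on $o(\xi\downarrow h_\xi(A))$. The paper simply states that the corollary "follows by combining" these two lemmas, which is exactly the chain you spelled out.
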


Our temporary definition of $\oOld$ will not be used further on in the paper. In previous papers on well-orders in the Japaridze algebra one used the definition $\oOld_\xi$ and denoted that by $o_\xi$. By Corollary \ref{theorem:oOrderRestrictedToWormsXiIsTildeO} we know that the definition of $o_\xi$ in this paper coincides with the old definition when restricted to $\Worms_\xi$. Thus, our notation for the new definition causes no rupture with the tradition yet merely generalizes the existing theory.


\section{A calculus for the consistency order-types}\label{section:CalculusForO}

In this section we show how to compute the order-types $o(A)$. Actually, we shall provide a calculus that reduces the computation of $o(A)$ to the computation of what we call \emph{worm enumerators}. The calculus will consist of three cases: the empty worm, worms containing a zero and non-empty worms that do not contain a zero. Recall that the definitions of $h(A),b(A),r(A)$ may be found in Section \ref{SubsDecomp}.

\subsection{Worms containing a zero}

For ordinals we do not have that $\xi < \zeta \ \Leftrightarrow \ \xi + \alpha < \zeta + \alpha$. However for addition on the right we do have that $\xi < \zeta \ \Leftrightarrow \ \alpha+ \xi  <  \alpha+ \zeta$. For worms we have something similar although now the left-most side of the worm is determinant:

\begin{lemma}\label{LemmaMonotonoe}
Given worms $A,B,C$, we have that $A0C<_0 B0C$ if and only if $A<_0 B$.
\end{lemma}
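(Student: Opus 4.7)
My plan is to prove the $\Leftarrow$ direction ($A<_0 B \Rightarrow A0C<_0 B0C$) by induction on $|A|+|B|$, and then derive the $\Rightarrow$ direction from trichotomy of $<_0$ on $\overline{\Worms}$. The induction proceeds by unpacking both sides via the head/remainder machinery of Section \ref{SubsDecomp}. Two syntactic identities drive the analysis: $h(A0C)=h(A)$ and $r(A0C)=r(A)\cdot 0C$ (with the convention $\top\cdot 0C = 0C$), and the same for $B$.

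When $h(B)=\top$, so $B=0B'$, Lemma \ref{LemmBodyDecomp} rewrites $A<_0 B$ as $A\leq_0 B'$ and $A0C<_0 B0C$ as $A0C\leq_0 B'0C$; since the pair $(A,B')$ is strictly smaller in $|A|+|B|$, the induction hypothesis matches the two sides (both for strict inequality and, by trichotomy, for the equivalence case). When $h(B)\neq\top$ and $h(A)\neq\top$, Lemma \ref{LemmOrd} decomposes each biconditional side into one disjunct comparing $A$ with $r(B)$ and another comparing $r(A)$ with $B$ under the common head condition $h(A)<_0 h(B)$; both sub-comparisons sit at strictly smaller $|A|+|B|$, and the two decompositions match term by term by the induction hypothesis. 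The edge cases $B=\top$ (vacuous) and $A=\top$ with $h(B)\neq\top$ (direct: chaining $B\vdash 1$, $B0C\vdash B$, $B0C\vdash 0C$, Lemma \ref{lemma:basicLemma}, and $\langle 1\rangle\phi\vdash\langle 0\rangle\phi$ yields $B0C\vdash 00C$) are dispatched on the side.

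The main obstacle is the subcase $h(A)=\top$ and $h(B)\neq\top$ with $A\neq\top$, so $A=0A'$. Here the naive Lemma \ref{LemmOrd} reduction loops, because $r(A)=A$. My workaround is to unfold the assumption: from $A<_0 B$ we have $B\vdash 0A = 00A'$, so the \RC axiom $\alpha\alpha\phi\vdash\alpha\phi$ gives $B\vdash 0A'$, i.e., $A'<_0 B$ at strictly smaller size. The induction hypothesis then yields $A'0C<_0 B0C$, i.e., $B0C\vdash 0(A'0C)$, whence Lemma \ref{LemmVdashLeq} gives $0(A'0C)\leq_0 B0C$. To upgrade this inequality to strict, I invoke Lemma \ref{LemmNoHead}: the worm $0(A'0C)$ has trivial head, and so equals $r$ of itself, which means any worm equivalent to it must also have trivial head; but $h(B0C)=h(B)\neq\top$, forcing $0(A'0C)<_0 B0C$, which is precisely $A0C<_0 B0C$.

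Finally, the $\Rightarrow$ direction falls out of trichotomy (Theorem \ref{theorem:wormsIsomorphicToOrdinals}): if $A0C<_0 B0C$ then $B<_0 A$ would yield $B0C<_0 A0C$ by the $\Leftarrow$ direction, contradicting antisymmetry, while $A\equiv B$ would force $A0C\equiv B0C$ by applying $\Leftarrow$ in both directions, again contradicting $A0C<_0 B0C$; the only remaining possibility is $A<_0 B$.
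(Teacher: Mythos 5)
Your plan — induction on $|A|+|B|$ with a case split driven by $h(A),h(B)$, using Lemma \ref{LemmOrd}, Lemma \ref{LemmBodyDecomp}, and Lemma \ref{LemmNoHead} — is genuinely different from the paper's. The paper sidesteps the issue you run into by proving a more symmetric statement: for any $C,D$ with $h(C)=h(D)=\top$, $AC<_0 BC$ implies $AD<_0 BD$. Because $C$ and $D$ both range freely, the induction hypothesis can be invoked with $C$ and $D$ interchanged, which in effect gives the contrapositive (hence the converse) at smaller sizes without ever stating a biconditional. Taking $C=\top$ and $D=0C'$, or vice versa, then yields both directions of the Lemma. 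Your framing, fixing $D=0C$ and $C=\top$, loses that symmetry, and this is exactly where a gap opens up.

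The gap is in the handling of the equivalence case, in two places. First, in the $h(B)=\top$ case you rewrite the goal as $A\leq_0 b(B)\Rightarrow A0C\leq_0 b(B)0C$, and you dispatch $A\equiv b(B)$ with the remark ``by trichotomy, for the equivalence case.'' But if your induction hypothesis is only the $\Leftarrow$ implication $A'<_0 B'\Rightarrow A'0C<_0 B'0C$, then for $A\equiv b(B)$ it gives no information at all: both $A<_0 b(B)$ and $b(B)<_0 A$ are false, so both instances have a false antecedent. Trichotomy cannot conjure $A0C\equiv b(B)0C$ from that. Second, the same error recurs in your final paragraph: ``$A\equiv B$ would force $A0C\equiv B0C$ by applying $\Leftarrow$ in both directions'' is not a valid inference --- again both antecedents are false. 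That implication $A\equiv B\Rightarrow A0C\equiv B0C$ is precisely Corollary \ref{theorem:equivalentSubstitutionOnLeftSeparatedByComma}, which the paper derives \emph{from} Lemma \ref{LemmaMonotonoe}, so appealing to it here would be circular.

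The repair is to run the induction on the full biconditional $A<_0 B\Leftrightarrow A0C<_0 B0C$. Then trichotomy does legitimately give $A\equiv B\Leftrightarrow A0C\equiv B0C$ at any size strictly below the current one, which closes the $h(B)=\top$ case, and the final paragraph becomes unnecessary. This changes your obligations in the remaining cases: in the $h(A),h(B)\neq\top$ case your term-by-term matching via Lemma \ref{LemmOrd} already gives both directions; but in the case $h(A)=\top\neq h(B)$, $A=0A'\neq\top$, your argument only establishes $\Leftarrow$. The $\Rightarrow$ direction also needs to be spelled out there --- it is available by a similar route: from $A0C<_0 B0C$ one gets $B0C\vdash 00A'0C\vdash 0A'0C$, so $A'<_0 B$ by the (biconditional) induction hypothesis, and then $B\vdash 1\wedge 0A'\vdash 10A'\vdash 00A'=0A$ using Lemma \ref{lemma:basicLemma} and $h(B)\neq\top$. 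With these two repairs your argument goes through; your use of Lemma \ref{LemmNoHead} to upgrade $\leq_0$ to $<_0$ in that case is a nice touch not present in the paper's proof.
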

In order to simultaenously prove both implications, we will instead prove the equivalent claim that if $A,B,C,D$ are worms such that $h(C)=h(D)=\top$ (i.e., $C,D$ begin with a zero or are $\top$), then $AC<_0 BC$ implies that $AD<_0 BD$. We work by induction on $|A|+|B|$. Observe that if $A_1,A_0$ are worms with $h(A_0)=\top$, then $h(A_1A_0)=h(A_1)$ while $r(A_1A_0)=r(A_1)A_0$.

If $AC<_0 BC$, then by Lemma \ref{LemmOrd} we have that either (i) $AC<_0 r(BC)$, or (ii) $r(AC)<_0 BC$ and $h(AC)<_0 h(BC)$. If (i) holds, since $r(BC)=r(B)C$, we have that $C \leq_0 AC<_0r(B)C$, hence by irreflexivity $r(B)\not=\top$ (or else $C<_0 C$) and thus $r(B)=0\body B$ and $\body{BC}=\body BC$. Thus by Lemma \ref{LemmBodyDecomp}, $ AC \leq_0 \body {BC}=\body BC$. By linearity this is equivalent to $\body BC\not<_0AC$, so that by our induction hypothesis $\body BD\not<_0 AD$, i.e. $AD\leq_0 \body BD$. It follows that $r(B)D=0b(B)D>_0 b(B)D\geq_0 AD$, and thus $BD=h(B)r(B)D\vdash h(B)0AD\vdash 0AD,$ so that $AD <_0 BD$.

If (ii) holds, we first claim that $BD\vdash r(A)D$. If $r(A)=\top$ this is obvious, otherwise we note that $\body AC<_0AC<_0 BC$, so by the induction hypothesis $\body AD<_0 BD$ and $BD\vdash 0\body AD=r(A)D$. Thus, since $h(A)=h(AC)<_0 h(BC)=h(B)$, we also have $h(A)<_1 h(B)$ and get $BD\equiv h(B)\wedge BD\vdash 1h(A)\wedge r(A)D \equiv 1h(A)r(A)D\vdash 0AD,$ and $AD<_0 BD$.
\endproof

We have seen at the end of Lemma \ref{theorem:equivalenceLemma} that on the left side of a worm, one is not allowed to replace a part by any equivalent part. The next corollary tells us that if we have a zero, then that allows us such a substitution and as such the zero functions as a sort of buffer.

\begin{corollary}\label{theorem:equivalentSubstitutionOnLeftSeparatedByComma}
For worms $A,B$ and $C$ we have that $A \equiv B $ if and only if $A0C \equiv B0C.$\end{corollary}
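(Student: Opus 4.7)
The plan is to derive this corollary as an immediate consequence of Lemma \ref{LemmaMonotonoe} combined with the linearity of $<_0$ on $\overline{\Worms}$ from Theorem \ref{theorem:wormsIsomorphicToOrdinals}. Since $<_0$ is a linear order on $\overline\Worms$, for any worms $X,Y$ exactly one of $X<_0 Y$, $X\equiv Y$, $Y<_0 X$ holds, so $X\equiv Y$ is equivalent to the conjunction $\neg(X<_0 Y)\wedge\neg(Y<_0 X)$.

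First, I would apply this trichotomy to the pair $(A,B)$: we have $A\equiv B$ if and only if neither $A<_0 B$ nor $B<_0 A$. By Lemma \ref{LemmaMonotonoe}, $A<_0 B\Leftrightarrow A0C<_0 B0C$, and symmetrically $B<_0 A\Leftrightarrow B0C<_0 A0C$. Combining these equivalences, $A\equiv B$ holds if and only if neither $A0C<_0 B0C$ nor $B0C<_0 A0C$, which by trichotomy applied now to the pair $(A0C, B0C)$ is equivalent to $A0C\equiv B0C$.

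There is essentially no obstacle here; the only subtlety worth stating explicitly is the appeal to trichotomy (rather than attempting a direct syntactic manipulation), since on the left-hand side of a worm one is in general not permitted to substitute $\equiv$-equivalent parts, as illustrated by the $11\not\equiv 101$ example following Lemma \ref{theorem:equivalenceLemma}. The zero in the middle plays the role of a ``buffer'' precisely because Lemma \ref{LemmaMonotonoe} holds for arbitrary $A,B$ once such a zero is inserted, so the substitution is legal in this particular configuration.
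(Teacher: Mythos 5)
Your proof is correct and takes essentially the same route as the paper, which simply labels the corollary ``Immediate from Lemma~\ref{LemmaMonotonoe}''; you have merely spelled out the trichotomy argument implicit in that one-liner.
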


\begin{proof}
Immediate from Lemma \ref{LemmaMonotonoe}.
\end{proof}

The following is an analogue of Lemma \ref{theorem:BasicPropertiesOrdinalArithmetic}; it says that, for worms, we have a form of subtraction. In this case, however, it becomes ``right subtraction''.

\begin{lemma}\label{LemmaDecompose}
$A<_0B$ if and only if there exists $C$ such that $B\equiv C0A$. 
\end{lemma}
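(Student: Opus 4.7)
The $(\Leftarrow)$ direction is immediate: by Lemma \ref{theorem:equivalenceLemma}.\ref{theorem:equivalenceLemma4} we have $C0A\vdash 0A$, so $B\equiv C0A$ yields $B\vdash 0A$, i.e., $A<_0 B$.

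For $(\Rightarrow)$ I would argue by induction on $|B|$. The base $B=\top$ is vacuous by Lemma \ref{theorem:ordersHaveMinimalElementAndAreDiscrete}.\ref{theorem:ordersHaveMinimalElementAndAreDiscrete:minimalElement}. For the inductive step, I split on whether $h(B)=\top$, that is, whether $B$ is a successor or a limit worm in the sense of Lemma \ref{LemmLimitWorm}. When $h(B)=\top$, we have $B=0b(B)$ with $|b(B)|<|B|$, and the third item of Lemma \ref{LemmBodyDecomp} reduces $A<_0 B$ to $A\leq_0 b(B)$: if $A\equiv b(B)$ take $C=\top$; otherwise $A<_0 b(B)$ and the induction hypothesis on $b(B)$ supplies $C'$ with $b(B)\equiv C'0A$, so that $C=0C'$ works.

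When $h(B)\neq\top$, Lemma \ref{LemmOrd}.\ref{LemmOrdOne} distinguishes two subcases. If $A<_0 r(B)$, then $|r(B)|<|B|$ because $|h(B)|\geq 1$, so the induction hypothesis gives $r(B)\equiv C''0A$; by Lemma \ref{LemmNoHead} the worm $C''0A$ has empty head, so $C''=\top$ or $C''=0C'''$, and Lemma \ref{lemma:basicLemma}.\ref{lemma:basicLemma2} (applied with $h(B)\in\Worms_1$) rewrites $B\equiv h(B)\wedge r(B)$ as a single worm of the form $C0A$ with $C=h(B)$ or $C=h(B)0C'''$ respectively.

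The remaining subcase, where $A\geq_0 r(B)$, $h(A)<_0h(B)$, and $r(A)<_0B$, is the main obstacle. My plan is to write $B=\xi B'$ with $\xi\geq 1$ (so $|B'|<|B|$) and compare $A$ with $B'$. The case $A<_0 B'$ reduces directly to the induction hypothesis on $B'$ via $C=\xi C'$. The cases $A\equiv B'$ and $A>_0 B'$ (in the latter, Lemma \ref{theorem:ordersHaveMinimalElementAndAreDiscrete}.\ref{theorem:ordersHaveMinimalElementAndAreDiscrete:discrete} forces $A\geq_0 0B'$, pinning down where $A$ lies) I would handle via the \RC interaction axiom $\xi\phi\wedge 0\psi\vdash\xi(\phi\wedge 0\psi)$, which together with Lemma \ref{lemma:basicLemma}.\ref{lemma:basicLemma2} supplies identities such as $\xi D\equiv\xi D0D$ whenever the relevant heads lie in $\Worms_1$. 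Keeping the head/body bookkeeping straight when $B'\notin\Worms_1$, so that these identities can still be threaded through the decomposition of $B'$, is the most delicate part of the argument.
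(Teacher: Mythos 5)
Your $(\Leftarrow)$ direction and your handling of the successor case ($h(B)=\top$) and of the subcase $A<_0 r(B)$ are fine and essentially match the paper's own argument, just phrased with a slightly finer case split.

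The genuine gap is the remaining case, which you correctly identify as the ``main obstacle'' but then only sketch vaguely. The paper resolves it with a two-line argument that your route misses entirely: when $A\geq_0 r(B)$ (so in particular $h(B)\neq\top$), one can take $C=h(B)$ outright and check both inclusions of $B\equiv h(B)0A$ directly. Indeed $B\vdash h(B)\wedge 0A\equiv h(B)0A$ because $B\vdash 0A$ (from $A<_0 B$) and $h(B)\in\Worms_1$; conversely $h(B)0A\vdash h(B)\wedge 0A\vdash h(B)\wedge 0r(B)\vdash h(B)\wedge r(B)\equiv B$, using $A\vdash r(B)$ from $A\geq_0 r(B)$. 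Your plan of peeling one modality off $B$ to get $B=\xi B'$ and then threading identities like $\xi D\equiv\xi D0D$ through the subcases $A\equiv B'$, $A>_0 B'$ is not obviously wrong, but it is considerably harder (the identity $\xi D\equiv \xi D0D$ needs $h$/$r$ bookkeeping on $D$ itself, and $h(A)$ need not equal $h(B')$ when $A\equiv B'$), and you acknowledge that you have not carried it through. As written, the proposal does not constitute a proof; the missing idea is simply to observe that in this case $C=h(B)$ already works.
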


\proof
One direction is trivial: if $B\equiv C0A$, then clearly $B\vdash 0A$.

So assume that $A<_0 B$. We shall prove by induction on $|B|$ that if $A<_0B$, then we can find a worm $C$ so that $B \equiv C0A$. 

We consider two cases depending on $A<_0 r(B)$ or $r(B)\leq_0 A$.

In case $A<_0 r(B)$ we must have $r(B)\not =\top$, so $r(B)=0\body B$, whence $A\leq_0 \body B$. If indeed $A\equiv \body B$ then we have $B=h(B)0\body B\equiv h(B)0A$. Otherwise, $A<_0 \body B$ so by induction hypothesis $\body B\equiv C'0 A$ for some worm $C'$. We set $C=h(B)0C'$ and readily see that $B\equiv C0A$.

In case $A\geq_0 r(B)$ we claim that $B\equiv h(B)0A$. Indeed since $A<_0B$, $B\vdash h(B)\wedge 0A\equiv h(B)0A$, while since $A\geq_0 r(B)$, we also have $h(B)0A\vdash h(B)\wedge 0A\vdash h(B)\wedge 0r(B)\vdash h(B)\wedge r(B)\equiv B$.
\endproof

The above lemmas suggest that concatenations of the form $A0B$ behave much like addition; the following result makes this precise.

\begin{lemma}\label{theorem:basicPropertyOrder:successors}
Given worms $A,B$, $o(A0B)=o(B)+1+o(A)$.
\end{lemma}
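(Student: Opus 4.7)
The plan is to prove the identity by well-founded induction on $A$ with respect to $<_0$, treating $B$ as a parameter. The heart of the argument will be the following characterization of the $<_0$-predecessors of $A0B$: for any worm $C$, we have $C <_0 A0B$ if and only if either $C \leq_0 B$ or $C \equiv C'0B$ for some $C' <_0 A$, with the two alternatives mutually exclusive modulo $\equiv$.

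For the left-to-right direction, suppose $C <_0 A0B$; by linearity of $<_0$, either $C \leq_0 B$ or $C >_0 B$, and in the latter case Lemma \ref{LemmaDecompose} supplies $C \equiv C'0B$ for some $C'$, after which Lemma \ref{LemmaMonotonoe} upgrades $C'0B <_0 A0B$ to $C' <_0 A$. The right-to-left direction relies on the auxiliary fact $A0B \vdash 0B$, which follows by replacing each modality of $A$ by $0$ (using $\beta\phi \vdash 0\phi$ for $\beta \geq 0$), collapsing the resulting consecutive $0$s via $00\phi \vdash 0\phi$, and invoking monotonicity. Given this, $C \leq_0 B$ implies $A0B \vdash 0B \vdash 0C$, while $C \equiv C'0B$ with $C' <_0 A$ yields $C <_0 A0B$ directly via Lemma \ref{LemmaMonotonoe}. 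Exclusivity is automatic, since any $C \equiv C'0B$ satisfies $C \vdash 0B$ and hence $C >_0 B$.

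With the characterization in hand, the definition of $o$ gives
\[
o(A0B) = \max\left(\sup_{C \leq_0 B}(o(C)+1),\ \sup_{C' <_0 A}(o(C'0B)+1)\right),
\]
where an empty supremum is read as $0$. The first supremum equals $o(B)+1$, attained at $C \equiv B$. For the second, the induction hypothesis applied to each $C' <_0 A$ (with the parameter $B$) gives $o(C'0B) = o(B)+1+o(C')$. When $A \neq \top$, right-continuity of ordinal addition combined with $\sup_{C' <_0 A}(o(C')+1) = o(A)$ converts the second supremum to $o(B)+1+o(A)$, which dominates $o(B)+1$ because $o(A) \geq 1$, so the max equals $o(B)+1+o(A)$. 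When $A = \top$ the second supremum is empty, so the max equals $o(B)+1 = o(B)+1+o(\top)$. In both cases $o(A0B) = o(B)+1+o(A)$, as required.

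The hard part will be establishing the characterization of the $<_0$-predecessors of $A0B$, which forces one to combine the ``right subtraction'' Lemma \ref{LemmaDecompose} with the monotonicity Lemma \ref{LemmaMonotonoe} and the easy derivation $A0B \vdash 0B$; once this structural fact is secured, the ordinal arithmetic is routine via right-continuity of ordinal addition.
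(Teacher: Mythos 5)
Your proof is correct and follows essentially the same route as the paper: well-founded induction on $A$ with $B$ as a parameter, using Lemma \ref{LemmaDecompose} and Lemma \ref{LemmaMonotonoe} to control the $<_0$-predecessors of $A0B$. The only cosmetic difference is that you fold the two inequalities into a single predecessor characterization and invoke right-continuity of ordinal addition, whereas the paper proves the two bounds separately (using surjectivity of $o$ for the lower bound); the underlying argument is the same.
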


\proof
By induction on $<_0$. First let us show that $o(A0B)\leq o(B)+1+o(A)$. Suppose that $C<_0 A0B$. If $C\leq _0 B$, then we already have $o(C)\leq o(B)<o(B)+1+o(A)$. If $C>_0 B$, then by Lemma \ref{LemmaDecompose} we have that $C\equiv D0B$ for some $D$, and thus by the induction hypothesis we have $o(C)=o(B)+1+o(D)$. But by Lemma \ref{LemmaMonotonoe}, $D<_0 A$ so $o(D)<_0 o(A)$ and thus $o(B)+1+o(D)<o(B)+1+o(A)$.

We now will show that $ o(B)+1+o(A) \leq o(A0B)$. Note that if $A=\top$, then $o(A0B) = o(0B) = o(B)+1$ by Lemma \ref{LemmOrdBasic}. Thus, we assume that $A\not \equiv \top$ so that we can choose $\xi< o(A)$. Then, since $o$ is an isomorphism between $\la \overline \Worms, <_0\ra$ and the ordinals (Lemma \ref{theorem:oDefinesAnIsomorphism}), there is a worm $C<_0 A$ with $o(C)=\xi$. By Lemma \ref{LemmaMonotonoe} we have that $C0B<_0 A0B$, and by induction $o(C0B)=o(B)+1+o(C)=o(B)+1+\xi$. Since $\xi<o(A)$ was arbitrary, we conclude that $o(A0B)\geq o(B)+1+o(A)$.
\endproof

In this subsection we have dealt with worms that do contain a zero and could recursively compute their order-types. We shall reduce worms that do not contain a zero to worms that do contain a zero via the $\xi \uparrow$ and $\xi \downarrow$ mappings.

\subsection{A calculus using the worm enumerators $\sigma^\alpha$}

A key role in the larger calculus is reserved for the functions $\sigma^\alpha$ that enumerate the $<$-orders of worms in $\Worms_\alpha$ in increasing order. We shall prove sufficiently many structural properties of these functions $\sigma^\alpha$ so that we end up with a recursive calculus to compute them. 

Moreover, it shall turn out that the functions $\sigma^\alpha$ can be viewed as transfinite iterations of a certain ordinal exponentiation that we shall call \emph{hyperexponential} functions and which we shall discuss in  Section
\ref{section:hyperations}.

\begin{definition}[Worm enumerators $\sigma^{\alpha}$]
We define $\sigma^{\alpha}$ to be the function that enumerates $o(\Worms_\alpha)$ in increasing order.
\end{definition}

We shall first see how a calculus for $o$ can be reduced to a calculus for these functions $\sigma^\alpha$. The following nice lemma characterizes $\sigma^{\alpha}$ as a conjugate of the map $\alpha \upAlg$ on worms.

\begin{lemma}\label{theorem:ConjugationLemma}
$o(\Worms_\alpha)=\oAlg (\overline\Worms_\alpha)$ is enumerated in increasing order by $\oAlg\circ\alpha{\upAlg} \circ \oAlg^{-1}$, that is, 
\[
\sigma^{\alpha} = \oAlg\circ\alpha{\upAlg} \circ \oAlg^{-1}.
\]
\end{lemma}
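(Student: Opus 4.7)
The plan is to show that the composition $F := \oAlg\circ\alpha\upAlg\circ\oAlg^{-1}$ is a strictly increasing bijection from $\ord$ onto $o(\Worms_\alpha)$; since the enumeration of $o(\Worms_\alpha)$ in increasing order is unique, this forces $F=\sigma^\alpha$.

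First I would note the set-theoretic identity $o(\Worms_\alpha) = \oAlg(\overline\Worms_\alpha)$, which is immediate from the fact that $\oAlg(\overline A) = o(A)$ for every $A \in \Worms$. This identifies the codomain we need to hit.

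Next I would check that each factor in $F$ is an order-isomorphism onto its image. By Lemma \ref{theorem:oDefinesAnIsomorphism}, $\oAlg^{-1}\colon \langle\ord,<\rangle \to \langle\overline\Worms,<_0\rangle$ is an order-isomorphism. By Lemma \ref{theorem:IsomorphicWormFragments}, $\alpha\upAlg\colon \langle\overline\Worms,<_0\rangle \to \langle\overline\Worms_\alpha,<_\alpha\rangle$ is an order-isomorphism; but by Lemma \ref{theorem:uparrowInvarianceProperties}.\ref{item:SmallerRelationExtendsLargerRelation:theorem:uparrowInvarianceProperties} the orderings $<_\alpha$ and $<_0$ coincide on $\overline\Worms_\alpha$, so $\alpha\upAlg$ is in fact an order-isomorphism from $\langle\overline\Worms,<_0\rangle$ onto $\langle\overline\Worms_\alpha,<_0\rangle$. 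Finally, $\oAlg$ restricted to $\overline\Worms_\alpha$ is an order-preserving bijection onto $\oAlg(\overline\Worms_\alpha) = o(\Worms_\alpha)$, again by Lemma \ref{theorem:oDefinesAnIsomorphism}.

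Composing, $F$ is a strictly increasing bijection from $\ord$ onto $o(\Worms_\alpha)$. Since $\sigma^\alpha$ is, by definition, the unique such enumeration, $F = \sigma^\alpha$. There is no serious obstacle here: the only subtlety is remembering to invoke Lemma \ref{theorem:uparrowInvarianceProperties}.\ref{item:SmallerRelationExtendsLargerRelation:theorem:uparrowInvarianceProperties} so that the middle map $\alpha\upAlg$ preserves the ``correct'' order $<_0$ on the target side, rather than merely the shifted order $<_\alpha$.
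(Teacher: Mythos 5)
Your proof is correct and follows essentially the same route as the paper: compose the three order-isomorphisms $\oAlg^{-1}$, $\alpha\upAlg$ (with Lemma \ref{theorem:uparrowInvarianceProperties}.\ref{item:SmallerRelationExtendsLargerRelation:theorem:uparrowInvarianceProperties} reconciling $<_\alpha$ with $<_0$), and $\oAlg$, and then appeal to uniqueness of the order-preserving enumeration. The paper's own proof invokes exactly these lemmas in exactly this chain.
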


\begin{proof}
In the proof we shall explicitly write $<_0$ for the ordering on worms and $<$ for the ordering on ordinals. Lemma \ref{theorem:oDefinesAnIsomorphism} told us that $\oAlg: \langle\overline\Worms, <_0 \rangle\cong \langle\ord,<\rangle$. Thus by Lemma \ref{theorem:uparrowInvarianceProperties}.\ref{item:SmallerRelationExtendsLargerRelation:theorem:uparrowInvarianceProperties}
we see that for $A, B \in \Worms_\alpha$
\begin{equation}\label{equation:isomorphismWormsAlphaAndOrdinals}
A <_{\alpha} B \ \Leftrightarrow \ A<_0 B \ \Leftrightarrow \ o(A) < o(B). 
\end{equation}
If we combine this with the fact that $o(\Worms_\alpha)$ is an unbounded class of ordinals, we see that an order-preserving enumeration of $o(\Worms_\alpha)$ is nothing but the unique isomorphism between $\la {\sf On}, < \ra$ and $\la o(\Worms_\alpha), <_\alpha\ra$.

We can reformulate \eqref{equation:isomorphismWormsAlphaAndOrdinals} as $
\oAlg \colon \la \overline\Worms_\alpha, <_\alpha \ra \cong \la o(\Worms_\alpha),<\ra.
$ We also have by Lemma \ref{theorem:IsomorphicWormFragments} that $
\alpha {\upAlg} \colon \la \overline\Worms, <_0 \ra  \cong \la \overline\Worms_\alpha,<_{\alpha}\ra .$ Once more using the fact that $
\oAlg^{-1}\colon \la \ord,< \ra \cong \la \overline\Worms,<_0\ra ,$ we see by composing these three isomorphisms that $\oAlg\circ \alpha {\upAlg} \circ \oAlg^{-1} \colon \la \ord,< \ra \cong \la o(\Worms_\alpha),<\ra$, whence $\sigma^\alpha = \oAlg\circ \alpha {\upAlg} \circ \oAlg^{-1}$.
\end{proof}

So seeing $\alpha\upAlg$ as an action of the ordinals on $\overline\Worms$, and $\sigma^\alpha$ as an action of the ordinals on the ordinals, the above tells us that the two actions are isomorphic. Let us draw a nice corollary from our lemma.

\begin{corollary}\label{theorem:orderAndArrows}
For any worm $A$, $o(\alpha \uparrow A) = \sigma^\alpha o(A)$
\end{corollary}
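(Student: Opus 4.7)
The plan is to obtain this corollary as an almost immediate consequence of the conjugation Lemma \ref{theorem:ConjugationLemma}, by applying both sides of the identity $\sigma^\alpha = \oAlg\circ\alpha{\upAlg}\circ\oAlg^{-1}$ to the ordinal $o(A)$ and then tracking through the definitions.

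First I would evaluate the right-hand side at $o(A)$. Since $\oAlg\colon \overline\Worms\to\ord$ is an isomorphism (Lemma \ref{theorem:oDefinesAnIsomorphism}) and $\oAlg(\overline A)=o(A)$ by the convention fixed just before Lemma \ref{theorem:oDefinesAnIsomorphism}, we get $\oAlg^{-1}(o(A))=\overline A$. Next, by the definition of the map $\alpha\upAlg$ given in Lemma \ref{theorem:IsomorphicWormFragments}, we have $\alpha\upAlg\overline A=\overline{\alpha\uparrow A}$. Finally, applying $\oAlg$ again and using the same convention yields $\oAlg\bigl(\overline{\alpha\uparrow A}\bigr)=o(\alpha\uparrow A)$.

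Chaining these three equalities, the conjugation lemma gives
\[
\sigma^\alpha o(A)=\oAlg\bigl(\alpha\upAlg(\oAlg^{-1}(o(A)))\bigr)=\oAlg\bigl(\alpha\upAlg\overline A\bigr)=\oAlg\bigl(\overline{\alpha\uparrow A}\bigr)=o(\alpha\uparrow A),
\]
which is exactly the claim. There is no real obstacle here; the content of the corollary is entirely absorbed by Lemma \ref{theorem:ConjugationLemma}, and the only thing to check is that the definitions of $\oAlg$ and $\alpha\upAlg$ on equivalence classes cohere with the corresponding operations on worms. Both points were already verified in the statements of Lemma \ref{theorem:oDefinesAnIsomorphism} and Lemma \ref{theorem:IsomorphicWormFragments}, so the proof reduces to a single line of composition of maps.
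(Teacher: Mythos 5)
Your proposal is correct and follows exactly the same argument as the paper: both derive the claim by unwinding the conjugation identity $\sigma^\alpha=\oAlg\circ\alpha\upAlg\circ\oAlg^{-1}$ from Lemma \ref{theorem:ConjugationLemma} and using $\oAlg^{-1}(o(A))=\overline A$ together with $\alpha\upAlg\overline A=\overline{\alpha\uparrow A}$. The only cosmetic difference is that the paper writes the chain of equalities starting from $o(\alpha\uparrow A)$ and ending at $\sigma^\alpha o(A)$, whereas you run it in the opposite direction.
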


\begin{proof}
We have that
\[o(\alpha\uparrow A)=\oAlg(\overline{\alpha\uparrow A})=\oAlg(\alpha \upAlg \overline A) = \oAlg(\alpha \upAlg \oAlg^{-1} (o(A)))= \sigma^\alpha o(A).\]
\end{proof}

We may recast this by stating that the following diagram commutes:
\[
\xymatrix
{
\Worms\ar[d]^o\ar[r]^{\alpha\uparrow}&\Worms\ar[d]^o\\
\ord\ar[r]^{\sigma^{\alpha}}&\ord
}
\]
With this we now obtain a complete calculus for computing $o$ and $o_\alpha$ once we know how to compute the functions $\sigma^\alpha$.

\begin{theorem}\label{theorem:OrderTypeCalculus}
Given worms $A,B$ and an ordinal $\xi$,
\begin{enumerate}
\item
$o(\top)=0$;
\item\label{item:omegeSums:theorem:OrderTypeCalculus}
$o(A0B)=o(B)+1+o(A)$;
\item\label{exo}
$o({\x}\uparrow {A}) = \sigma^{\x} o({A})$.
\end{enumerate}
\end{theorem}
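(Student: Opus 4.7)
The plan is to observe that this theorem is a summary statement; each of its three clauses has in fact already been established earlier in the paper, so my proof will simply cite the corresponding results and comment on how together they furnish a calculus.

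For item 1, I would appeal directly to Lemma \ref{LemmOrdBasic}.\ref{item:zero:LemmOrdBasic}, which asserts $o_\xi(\top)=0$ for every ordinal $\xi$; specializing to $\xi=0$ gives exactly $o(\top)=0$. For item 2, I would cite Lemma \ref{theorem:basicPropertyOrder:successors}, which was proven by induction on $<_0$ using Lemmas \ref{LemmaMonotonoe} and \ref{LemmaDecompose} together with the fact that $o$ is an isomorphism (Lemma \ref{theorem:oDefinesAnIsomorphism}); nothing more needs to be added. For item 3, I would invoke Corollary \ref{theorem:orderAndArrows}, whose proof consisted of unpacking the definition of $\sigma^\xi$ via Lemma \ref{theorem:ConjugationLemma} as $\oAlg\circ \xi{\upAlg}\circ\oAlg^{-1}$ and applying this conjugation to $o(A)$.

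The remaining content of the theorem is the claim (implicit in calling it a ``calculus'') that the three clauses suffice to compute $o(A)$ recursively for any worm $A$. To spell this out I would observe that by Lemma \ref{LemmLimitWorm} a worm is either $\top$, starts with a $0$, or has non-empty $1$-head. In the first case item 1 gives the answer; in the second, $A=0B$ for some shorter worm $B$, which is the case $A=\top$ of item 2 giving $o(A)=o(B)+1$, and more generally whenever $A$ contains a $0$ we may split at the leftmost $0$ and apply item 2; in the third case $A\in\Worms_1$, so $A=1\uparrow A'$ for a strictly shorter worm $A'$ (where $A'=1\downarrow A$), and item 3 reduces $o(A)$ to $\sigma^1 o(A')$. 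Termination is ensured since each reduction either shortens the worm or reduces it to the computation of $\sigma^\xi$ applied to an already-computed value.

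The only point that is not a literal quotation of earlier results is this termination/exhaustion observation, but it is immediate from Lemma \ref{LemmLimitWorm} combined with Lemma \ref{theorem:IsomorphicWormFragments}: the map $1\downarrow$ is a well-defined bijection on $\Worms_1$, and $|1\downarrow A|=|A|$, while the decomposition $A=C0B$ at the leftmost $0$ strictly shortens both factors. Of course the ``computation'' is only complete once one knows how to compute the worm enumerators $\sigma^\xi$ themselves, which is the subject of the subsequent section and hence not an obstacle to the present theorem.
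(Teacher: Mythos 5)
Your proof of the theorem itself is correct and matches what the paper does: the paper gives no explicit proof here because the theorem is indeed just a summary collecting Lemma~\ref{LemmOrdBasic}.\ref{item:zero:LemmOrdBasic}, Lemma~\ref{theorem:basicPropertyOrder:successors}, and Corollary~\ref{theorem:orderAndArrows}, exactly as you cite.

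The supplementary termination discussion, however, contains both an internal contradiction and a genuine gap. You first write that $A = 1\uparrow A'$ ``for a strictly shorter worm $A'$'' but then correctly observe a paragraph later that $|1\downarrow A| = |A|$; these cannot both be true, and the second one is right. More substantively, the recursion you describe does not terminate once modalities go past $\omega$: for $A = \langle\omega\rangle\top$ one has $1\downarrow A = \langle -1+\omega\rangle\top = \langle\omega\rangle\top = A$, so applying item 3 with $\sigma^1$ loops forever and never produces a worm containing a $0$. The paper is careful here: immediately after stating the theorem it says explicitly that one is ``running in circles,'' since the $\sigma^\xi$ are themselves defined via $o$, and it only claims completeness of this approach for worms with natural-number modalities (after Lemma~\ref{theorem:eSupDefinesWeakHyperation}, cf.\ Example~\ref{example:OrderComputationOfOmegaWorm}); the general transfinite case is deferred to the stand-alone recursion for $\sigma^\alpha$ in Theorem~\ref{theorem:RecursiveHyperexponential}. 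Since none of this extra discussion is needed to establish the three stated clauses, your proof of the theorem stands, but the claim of a complete terminating calculus at this stage overreaches in a way the paper deliberately avoids.
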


The calculus in this theorem looks efficient and elegant but we seem to be running in circles here. To compute $o$ we need to know how to compute the worm-enumerators $\sigma^\x$. The $\sigma^\x$ in their turn are defined in terms of $o$. \PreprintVersusPaper{However, close inspection reveals a strategy to break out of this circle for worms that only contain ordinals below $\omega$.

\begin{remark}[Strategy]\label{remark:generalStrategyForComputingO}
For very simple worms of the form $0^n$ with $n\in \omega$ we know that $o(0^n)=n$ (this is Lemma \ref{LemmOrdBasic}.\ref{LemmOrdBasicNaturals}). So, it would be good if with our calculus we could reduce any question about $o(A)$ to a combination of questions of order types of these simple worms of the form $0^n$, 

By repeatedly applying Lemma \ref{theorem:basicPropertyOrder:successors} we know that we can write $o(A)$ as a sum of $1$'s and $o(B_i)$'s with all the $B_i$'s in $\Worms_1$. An important first step is thus to reduce questions about these $o(B_i)$ to questions about $o(1{\downarrow}B_i)$. In particular, such a reduction will be embodied in the function $\sigma^1$.
\end{remark}

In the next section we shall see how this strategy is implemented.}{In the next section we shall see how we can break out of this circle and provide a stand-alone calculus for our worm-enumerators.}

\section{Computing the worm enumerators $\sigma^\xi$}\label{section:ComputingHyperexponentials}

In this section we shall see how the worm enumerators $\sigma^\alpha$ can be computed. We shall provide a recursive calculus in Theorem \ref{theorem:RecursiveHyperexponential}. 

\subsection{Worm enumerators: basic properties}

The first step in characterizing the worm enumerators we get almost for free and consists of determining the ordinal function $\sigma^0$.

\begin{lemma}\label{theorem:sigmaZeroIsTheIdentity}
The function $\sigma^0$ is the identity function on the ordinals.
\end{lemma}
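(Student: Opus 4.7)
The plan is to give two short, independent arguments; either would suffice, and together they make the statement essentially immediate. First I would unfold the definition: $\sigma^0$ is by definition the increasing enumeration of $o(\Worms_0)$. Since the class $\Worms_0$ consists of those worms whose modalities are all at least $0$, and every ordinal is $\geq 0$, we have $\Worms_0 = \Worms$. Hence $o(\Worms_0) = o(\Worms)$, and by Lemma \ref{theorem:oDefinesAnIsomorphism} this is all of $\ord$. The increasing enumeration of $\ord$ is the identity, so $\sigma^0 = \mathrm{id}$.

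As a second proof, I would appeal to Lemma \ref{theorem:ConjugationLemma}, which gives $\sigma^0 = \oAlg \circ 0\upAlg \circ \oAlg^{-1}$. Since $0 + \beta = \beta$ for every ordinal $\beta$, the operation $0 \uparrow$ leaves every worm unchanged, so $0\upAlg$ is the identity on $\overline\Worms$. The composition therefore collapses to $\oAlg \circ \oAlg^{-1} = \mathrm{id}_{\ord}$.

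There is no real obstacle here; the only thing to be a bit careful about is to invoke the right result (Lemma \ref{theorem:oDefinesAnIsomorphism} for the surjectivity of $o$, needed to ensure the enumeration ranges over all of $\ord$, not just a proper initial segment). I would present the first argument as the main proof and optionally mention the second as a remark. The proof is a one-liner, so I would not expand it beyond a couple of sentences.
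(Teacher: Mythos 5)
Your first argument is exactly the paper's proof: both unfold the definition of $\sigma^0$, note $\Worms_0=\Worms$, and invoke Lemma \ref{theorem:oDefinesAnIsomorphism} to conclude that $o(\Worms_0)=\ord$, whose increasing enumeration is the identity. Your second argument via Lemma \ref{theorem:ConjugationLemma} is a correct alternative, though the paper does not take that route.
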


\begin{proof}
Recall that by definition, $\sigma^0$ enumerates $o(\Worms_0)$ in increasing order. Since $\oAlg$ defines an isomorphism between $\overline \Worms$ and $\sf On$, we see that $o(\Worms_0) = o(\Worms) = {\sf On}$.  Evidently, $\sf On$ is enumerated by the identity function, so that $\sigma^0(\alpha) = \alpha$ for each ordinal $\alpha$.
\end{proof}

As a second step in characterizing the worm enumerators $\sigma^\alpha$, we shall prove that for each ordinal $\alpha$, the corresponding $\sigma^\alpha$ is a normal (both increasing and continuous) function. Since each $\sigma^\alpha$ by definition enumerates a class of ordinals \emph{in increasing order}, it is clear that each $\sigma^\alpha$ is increasing. 

So, next we need to see that each $\sigma^\alpha$ is continuous, that is, that computing $\sigma^\alpha$ commutes with taking suprema: if $\Delta$ is a set of ordinals then $\sigma^\alpha (\sup \Delta) = \sup\sigma^\alpha \Delta$. Since we know that $\sigma^{\alpha} = \oAlg\circ\alpha{\upAlg} \circ \oAlg^{-1}$ (this is Lemma \ref{theorem:ConjugationLemma}),  it suffices to prove that taking suprema commutes with all of $\oAlg$, $\oAlg^{-1}$, and $\upAlg$. For $\oAlg$ and $\oAlg^{-1}$ this has been established in Lemma \ref{theorem:oIsContinuous}; it remains to show that $\alpha{\uparrow}$ can be viewed as continuous on $\Worms$. We shall state continuity for `limit worms'. Recall from Lemma \ref{LemmLimitWorm} that $o(A) \in {\sf Lim}$ iff $h(A) \neq \top$ in which case $A = \sup_{B<A} B$.

\begin{lemma}\label{theorem:UparrowIsContinuous}
Given a worm $A$ with $h(A)\neq\top$ and an ordinal $\alpha>0$,  then 
\[
\begin{array}{ll}
\alpha\uparrow A = \alpha\uparrow (\displaystyle\sup_{B<A}B) =\displaystyle\sup_{B<A}\alpha\uparrow B & \mbox{  \ \ \ and,}\\
\alpha\upAlg A = \displaystyle\sup_{B<A}\alpha\upAlg B. & \\ 
\end{array}
\]
\end{lemma}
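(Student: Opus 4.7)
The first equation $\alpha\uparrow A = \alpha\uparrow(\sup_{B<A}B)$ is immediate from Lemma \ref{LemmLimitWorm}: since $h(A)\neq\top$, we have $A = \sup_{B<A}B$, and applying $\alpha\uparrow$ to both sides gives the identity. The substantive content is the second equation, which I would prove as two inequalities in $\langle\overline\Worms,<_0\rangle$; the corresponding statement for $\alpha\upAlg$ then follows by passing to equivalence classes.

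The upper-bound direction is straightforward: for every $B<_0 A$, Lemma \ref{theorem:uparrowInvarianceProperties}.\ref{item:comparingDifferentOrders:theorem:uparrowInvarianceProperties} yields $\alpha\uparrow B<_\alpha \alpha\uparrow A$, and since both worms lie in $\Worms_\alpha$, this is equivalent to $\alpha\uparrow B<_0\alpha\uparrow A$ by Lemma \ref{theorem:uparrowInvarianceProperties}.\ref{item:SmallerRelationExtendsLargerRelation:theorem:uparrowInvarianceProperties}. Hence $\alpha\uparrow A$ is a $<_0$-upper bound of $\{\alpha\uparrow B:B<_0 A\}$.

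For the least-upper-bound direction, suppose towards contradiction that some $C$ with $C<_0\alpha\uparrow A$ were also a $<_0$-upper bound of $\{\alpha\uparrow B:B<_0 A\}$. A useful preliminary observation is that $\alpha\uparrow A$ is itself a limit worm: since $h(A)\neq\top$, $A$ begins with some modality $\beta\geq 1$, so $\alpha\uparrow A$ begins with $\alpha+\beta\geq 1$ and $h(\alpha\uparrow A)\neq\top$. The plan is to exhibit a worm $D\in\Worms_\alpha$ with $C<_0 D<_0\alpha\uparrow A$; once found, taking $B:=\alpha\downarrow D$ gives $B<_0 A$ by Lemma \ref{theorem:IsomorphicWormFragments} and $\alpha\uparrow B=D>_0 C$ by Lemma \ref{theorem:uparrowProperties}.\ref{inverse}, contradicting that $C$ bounds $\{\alpha\uparrow B:B<A\}$.

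The main obstacle is producing such a $D\in\Worms_\alpha$: equivalently, one must show that the worms of $\Worms_\alpha$ sitting $<_0$-below $\alpha\uparrow A$ are $<_0$-cofinal in the class $\{X\in\Worms:X<_0\alpha\uparrow A\}$. Inside $\overline\Worms_\alpha$ alone this cofinality is immediate, because Lemma \ref{theorem:IsomorphicWormFragments} places $\alpha\uparrow A$ at $<_\alpha$-position $o(A)$, which is a limit ordinal, and $<_0$ agrees with $<_\alpha$ on $\Worms_\alpha$. The delicate point is to rule out a worm $C\in\Worms\setminus\Worms_\alpha$ that bounds all $\alpha\uparrow B$ ($B<A$) from $<_0$-above while still sitting strictly $<_0$-below $\alpha\uparrow A$. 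I would handle this by considering the $<_0$-least element $D^*$ of $\Worms_\alpha$ satisfying $D^*\geq_0 C$, which exists because $\alpha\uparrow A\in\Worms_\alpha$ is such an element and $\Worms_\alpha$ is $<_0$-well-ordered, and then contradicting $D^*\equiv\alpha\uparrow A$ by invoking that $\alpha\uparrow A$ is a $<_\alpha$-limit inside $\Worms_\alpha$: this yields a $<_\alpha$-cofinal, hence $<_0$-cofinal, sequence of elements of $\Worms_\alpha$ below $\alpha\uparrow A$, which cannot be uniformly dominated by the strictly smaller $C$.
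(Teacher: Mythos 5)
Your plan is sound at the outset: the first equation and the upper-bound direction of the second equation are handled exactly as in the paper, via Lemmas~\ref{LemmLimitWorm} and \ref{theorem:uparrowInvarianceProperties}. The trouble is in the least-upper-bound direction, and in fact you yourself pinpoint the obstacle precisely -- that $\Worms_\alpha^{<_0\,\alpha\uparrow A}$ must be $<_0$-cofinal in $\Worms^{<_0\,\alpha\uparrow A}$ -- but then do not supply a valid argument to overcome it. Having introduced $D^*$ as the $<_0$-least element of $\Worms_\alpha$ above $C$, you try to rule out $D^*\equiv\alpha\uparrow A$ by asserting that the $<_\alpha$-cofinal family $\{D\in\Worms_\alpha : D<_\alpha\alpha\uparrow A\}$ ``cannot be uniformly dominated by the strictly smaller $C$.'' But that family is exactly $\{\alpha\uparrow B : B<_0 A\}$, and the hypothesis on $C$ is precisely that it $<_0$-dominates all of it. Observing that this family is $<_\alpha$-cofinal in $\Worms_\alpha$ below $\alpha\uparrow A$ (hence $<_0$-cofinal \emph{there}, since the two relations coincide on $\Worms_\alpha$) says nothing about cofinality in the larger class $\Worms^{<_0\,\alpha\uparrow A}$; a priori, the whole family could lie entirely $<_0$-below some $C\in\Worms\setminus\Worms_\alpha$ that is still strictly $<_0$-below $\alpha\uparrow A$. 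The inference is circular: the assertion you invoke is the statement to be proved.

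The paper's proof gets around this by an induction on $|C|$. Given $C<_0\alpha\uparrow A$ with $C\neq\top$, it writes $C=\beta\uparrow(C_1 0 C_0)$ where $\beta$ is the smallest modality of $C$ and $C_1\in\Worms_1$, and distinguishes two cases. If $\beta\geq\alpha$, then $C\in\Worms_\alpha$, so $\alpha\downarrow C<_0 A$, and since $A$ is a limit worm (the key use of \eqref{equation:limitBehaviorOfWormsWithNonEmptyHead}) one may take $B=0(\alpha\downarrow C)<_0 A$, with $C<_0\alpha\uparrow B$. If $\beta<\alpha$, both $C$ and $\alpha\uparrow A$ lie in $\Worms_\beta$; applying the induction hypothesis to the strictly shorter worms $C_0$ and $C_1$ yields $B_0,B_1<_0 A$, and taking $B=\max\{B_0,B_1\}$ one reassembles the pieces inside \RC to conclude $C<_0\alpha\uparrow B$. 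In other words, the cofinality is not a free-standing fact one can invoke from the well-order structure alone; it is established recursively by decomposing $C$. To repair your argument, you would need to supply this induction (or an equivalent mechanism) in place of the unsupported domination claim.
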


\begin{proof}
Clearly, the second item follows from the first one. 
In Lemma \ref{LemmLimitWorm} it is proven that $A = \sup_{B<A} B$ whenever $h(A) \neq \top$ so we will concentrate on showing $\alpha\uparrow A =\sup_{B<A}\alpha\uparrow B$.
 
From $B<A$ we get by Lemma \ref{theorem:uparrowInvarianceProperties} that $\alpha \uparrow B <_\alpha \alpha \uparrow A$ whence $\alpha \uparrow B < \alpha \uparrow A$ so that $\sup_{B<A} \alpha \uparrow B \leq \alpha \uparrow A$ is clear.

In order to show that $\alpha \uparrow A \leq \sup_{B<A} \alpha \uparrow B$ it suffices to prove that for any $C< \alpha\uparrow A$ we can find $B<A$ such that $C< \alpha\uparrow B$. This we prove by induction on $|C|$, with the base case ($C=\top$) being trivial so that we may write $C=\beta \uparrow (C_10C_0)$. Moreover we shall pick $C_0$ and $C_1$ in the unique way so that $C_1 \in \Worms_1$ (this includes the case $C_1 = \top$).

If $\beta <\alpha$, then clearly both $C$ and $\alpha\uparrow A$ belong to $\Worms_\beta$. Using Lemmas \ref{theorem:uparrowProperties} and \ref{theorem:uparrowInvarianceProperties}, we have $C_10C_0=\beta\downarrow C< \beta\downarrow(\alpha \uparrow A)=(-\beta+\alpha)\uparrow A$. Since $C_0,C_1\leq C_10C_0$, by the induction hypothesis for $|C_0|,|C_1|<|C|$ there are $B_0,B_1<A$ with $C_i<(-\beta+\alpha)\uparrow B_i$. Taking $B=\max\{B_0,B_1\}$ we see that for $i=0,1$, $(-\beta + \alpha) \uparrow B\geq (-\beta+\alpha)\uparrow B_i  > C_i$. Since we chose $C_1$ with $C_1 \in \Worms_1$, we also have $(-\beta + \alpha) \uparrow B >_1 C_1$ whence $(-\beta + \alpha) \uparrow B \vdash 1 C_1\wedge 0C_0\vdash 1C_10C_0 \vdash 0C_10C_0$, i.e., $C_10C_0 < (-\beta + \alpha) \uparrow B$. It follows that $C = \beta \uparrow (C_10C_0) < \beta \uparrow ((-\beta + \alpha) \uparrow B) = \alpha \uparrow B$, as was to be proven.

If $\beta\geq \alpha$ we get that $C,\alpha\uparrow A\in \Worms_\alpha$, so that from $C < \alpha \uparrow A$ we obtain $\alpha\downarrow C< \alpha\downarrow(\alpha\uparrow A)= A.$ Since $A$ is a limit, we have that $\alpha\downarrow C<0(\alpha\downarrow C)<A$, thus $C= \alpha \uparrow (\alpha\downarrow C) < \alpha \uparrow (0(\alpha \downarrow C))$, as was to be shown.
\end{proof}

Now that we have established that $\alpha{\uparrow}$ can be viewed as continuous on $\Worms$ we can prove that the $\sigma^\alpha$ are continuous ordinal functions. 

\begin{lemma}\label{theorem:hyperexponentiationYieldNormalFunctions}
Each $\sigma^\alpha$ is a normal function.
\end{lemma}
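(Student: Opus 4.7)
The plan is to prove the two defining properties of a normal function separately: strict monotonicity and continuity. Strict monotonicity is immediate from the very definition of $\sigma^\alpha$ as the function enumerating $o(\Worms_\alpha)$ in \emph{increasing} order. So the real content is continuity, i.e.\ showing $\sigma^\alpha(\lambda) = \sup_{\xi < \lambda} \sigma^\alpha(\xi)$ for every limit ordinal $\lambda$.

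To establish continuity I would exploit the conjugation identity $\sigma^\alpha = \oAlg \circ \alpha\upAlg \circ \oAlg^{-1}$ provided by Lemma \ref{theorem:ConjugationLemma}, and argue that each of the three factors sends suprema of (appropriate) sets to suprema. Concretely: fix a limit ordinal $\lambda$ and let $A = \oAlg^{-1}(\lambda)$, so that $o(A) = \lambda$. Since $\lambda \in \mathsf{Lim}$, Lemma \ref{LemmLimitWorm} tells us $h(A) \neq \top$ and moreover $A = \sup_{B<A} B$, i.e.\ $A$ is a limit worm. Then I can invoke Lemma \ref{theorem:UparrowIsContinuous} to obtain $\alpha\uparrow A = \sup_{B<A} \alpha\uparrow B$, which is precisely the continuity of $\alpha\upAlg$ needed at limit worms. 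Finally, applying $\oAlg$ and using its continuity (Lemma \ref{theorem:oIsContinuous}) gives
\[
\sigma^\alpha(\lambda) \;=\; o(\alpha\uparrow A) \;=\; \sup_{B<A} o(\alpha \uparrow B) \;=\; \sup_{B<A} \sigma^\alpha(o(B)) \;=\; \sup_{\xi < \lambda} \sigma^\alpha(\xi),
\]
where the last equality uses that $o$ restricted to $\{B : B < A\}$ is a bijection onto $\lambda = \{\xi : \xi < \lambda\}$ (Lemma \ref{theorem:oDefinesAnIsomorphism}).

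There is no real obstacle here: all the heavy lifting has already been done in Lemmas \ref{theorem:oIsContinuous}, \ref{theorem:ConjugationLemma}, \ref{theorem:orderAndArrows}, and \ref{theorem:UparrowIsContinuous}. The only subtle point worth flagging in the write-up is verifying that $\oAlg^{-1}(\lambda)$ is indeed a limit worm, so that Lemma \ref{theorem:UparrowIsContinuous} (which requires $h(A)\neq\top$) applies; this is the content of Lemma \ref{LemmLimitWorm}. Thus the proof is essentially a one-line composition of previously established continuities, preceded by the trivial observation about strict monotonicity.
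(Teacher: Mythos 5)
Your proof is correct and follows essentially the same route as the paper: reduce strict monotonicity to the definition, and obtain continuity at a limit ordinal $\lambda$ by picking a limit worm $A$ with $o(A)=\lambda$ (Lemma \ref{LemmLimitWorm}) and then commuting $\sup$ through $\alpha\uparrow$ and $o$ via Lemmas \ref{theorem:UparrowIsContinuous} and \ref{theorem:oIsContinuous}, exactly as the paper's displayed chain of equalities does.
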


\begin{proof}
It is clear that $\sigma^\alpha$ is increasing so we only need to see that $\sigma^\alpha$ is continuous for each $\alpha$. 
%
%
For the continuity of $\sigma^\alpha$ we reason as follows. Let $\lambda \in {\sf Lim}$ so that for some worm $A$ with $h(A) \neq \top$ we have $\lambda = o(A)$.
\[\begin{array}{lcccccc}
\sigma^\alpha (\lambda) &=&  \sigma^\alpha o(A)
&=&  \oAlg  \, (\alpha \upAlg) \,  \oAlg^{-1} o(A)
&=&  \oAlg  \, (\alpha \upAlg) \,  \bar A \\
&  =&  \oAlg  \, (\alpha \upAlg) \,  \displaystyle\sup_{B<A}  \bar B   
&= & \oAlg  \, \displaystyle\sup_{B<A}  (\alpha \upAlg) \,   \bar B   
&  = & \displaystyle\sup_{B<A} \oAlg \,  (\alpha \upAlg)\,   \bar B\\
& =&  \displaystyle\sup_{B<A} \oAlg \,  (\alpha \upAlg)\, \oAlg^{-1} o(  B) 
  &=&  \displaystyle\sup_{B<A} \sigma^\alpha o(B) 
  &=& \displaystyle\sup_{\beta<\lambda} \sigma^\alpha \beta,
\end{array}\]
where we use Lemmas \ref{theorem:oIsContinuous} and \ref{theorem:UparrowIsContinuous} to commute $\sup$ with $\oAlg$ and $\alpha\Uparrow$, respectively.
\end{proof}
\noindent
As a next step in characterizing the $\sigma^\alpha$ functions we shall set out to determine $\sigma^1$. 
\PreprintVersusPaper{In order to analyze $\sigma^1$ let us see how the general strategy as outlined in Remark \ref{remark:generalStrategyForComputingO} gets reflected in some concrete examples:

\begin{enumerate}
\item
We first look at the $<_1$-first non-trivial worm in $\Worms_1$. It is not hard to prove by elementary methods that $1= \sup_{n\in \omega}0^n$. In this sense, $\la 0^n \ra_{n\in \omega}$ is a natural \emph{fundamental sequence} for the worm $1$. Since we know that $o(o^n)=n$ we see that 
\[
o(1) = o(\sup_{n\in \omega}0^n) = \sup_{n\in \omega} o (0^n) = \omega.
\]
\item
The $<_1$-second non-trivial worm in $\Worms_1$ is $11$. Again, by elementary methods we can prove that $11 = \sup_{n\in \omega} (10)^n1$ so that $\la (10)^n1 \ra_{n\in \omega}$ can be conceived as a natural fundamental sequence for the worm $11$. Now that we know $o(1) = \omega$, by Lemma \ref{theorem:basicPropertyOrder:successors} we know that $o(101) = o(1) + 1 + o(1) = \omega\cdot 2$. By repeating this argument we see that $o((10)^n1)= \omega\cdot (n+1)$ so that we obtain
\[
o(11) = o(\sup_{n\in \omega}(10)^n1) = \sup_{n\in \omega} o ((10)^n1) = \sup_{n\in \omega} \omega \cdot (n+1) = \omega^2.
\]

\item
For the $<_1$-third non-trivial $\Worms_1$ worm $111$ we find the fundamental sequence $(110)^n11$. Using the previously computed value of $o(11)$ we see that $o((110)^n11) = \omega^2\cdot(n+1)$ so that 
\[
o(111) = o(\sup_{n\in \omega}(110)^n11) = \sup_{n\in \omega} o ((110)^n11) = \sup_{n\in \omega} \omega^2 \cdot (n+1) = \omega^3.
\]
\end{enumerate} 
These three examples readily suggest an inductive argument with two important ingredients. First, each worm of the form $1A$ has a natural fundamental sequence and second, due to the particular uniform nature of these fundamental sequences, there seems to be an $\omega$ power of difference between $o(A)$ and $o(1{\uparrow}A)$. These ingredients are embodied below in Lemma \ref{theorem:FundamentalSequencesForWormsStartingWith1} and Lemma \ref{LemmSigmaOne} respectively.
}{We first look at the $<_1$-first non-trivial worm in $\Worms_1$. It is not hard to prove by elementary methods that $1= \sup_{n\in \omega}0^n$. In this sense, $\la 0^n \ra_{n\in \omega}$ is a natural \emph{fundamental sequence} for the worm $1$. Since we know that $o(o^n)=n$ we see that $o(1) = o(\sup_{n\in \omega}0^n) = \sup_{n\in \omega} o (0^n) = \omega$. 

In a similar fashion we see that for the $<_1$-second non-trivial worm in $\Worms_1$ which is $11$, can prove that $11 = \sup_{n\in \omega} (10)^n1$ so that $\la (10)^n1 \ra_{n\in \omega}$ can be conceived as a natural fundamental sequence for the worm $11$. Using that $o(1) = \omega$, by repeatedly applying Lemma \ref{theorem:basicPropertyOrder:successors} we know that $o((10)^n1)= \omega\cdot (n+1)$ so that $o(11) = o(\sup_{n\in \omega}(10)^n1) = \sup_{n\in \omega} o ((10)^n1) = \sup_{n\in \omega} \omega \cdot (n+1) = \omega^2
$. 

The following two lemmas are inspired by these examples and establish that limit worms admit uniform fundamental sequences and that there is essentially a power $\omega$ difference between $o(A)$ and $o(1{\uparrow}A)$.
}

\begin{lemma}\label{theorem:FundamentalSequencesForWormsStartingWith1}
Let $B\in \Worms_1$. For $A=1B$, we have that $A=\sup_{n<\omega}A[n],$ where $A[0]=B$ and $A[n+1]= B0A[n]$.
\end{lemma}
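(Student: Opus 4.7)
Since $\langle\overline\Worms,<_0\rangle$ is a well-order by Theorem \ref{theorem:wormsIsomorphicToOrdinals}, it suffices to establish two things: (a) $A[n]<_0 A$ for every $n<\omega$, and (b) for every worm $D<_0 A$ there exists $n$ with $D<_0 A[n]$.

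For (a), I would induct on $n$. The base case $A[0]=B<_0 1B$ follows from the axiom $1\phi\vdash 0\phi$ applied to $\phi=B$. For the inductive step, assuming $1B\vdash 0A[n]$, I would chain
\[
1B\vdash 1B\wedge 0A[n]\vdash 1(B\wedge 0A[n])\equiv 1(B0A[n])=1A[n+1]\vdash 0A[n+1],
\]
using Lemma \ref{lemma:basicLemma}.\ref{lemma:basicLemma1} to absorb the $\wedge 0A[n]$ under $\langle 1\rangle$ and Lemma \ref{lemma:basicLemma}.\ref{lemma:basicLemma2} (since $B\in\Worms_1$) to rewrite $B\wedge 0A[n]$ as $B0A[n]$.

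For (b), I would strong-induct on $|D|$. If $D\in\Worms_1$ (including $D=\top$), then since $<_1$ agrees with $<_0$ on $\Worms_1$ (Lemma \ref{theorem:uparrowInvarianceProperties}.\ref{item:SmallerRelationExtendsLargerRelation:theorem:uparrowInvarianceProperties}), the discreteness clause Lemma \ref{theorem:ordersHaveMinimalElementAndAreDiscrete}.\ref{theorem:ordersHaveMinimalElementAndAreDiscrete:discrete} forces $D\leq_1 B$, so $D\leq_0 A[0]<_0 A[1]$. Otherwise, Lemma \ref{LemmBodyDecomp} lets me write $D=h(D)0b(D)$ with $h(D)\in\Worms_1$ and $|b(D)|<|D|$. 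The same discreteness argument applied to $h(D)\leq_0 D<_0 1B$ yields $h(D)\leq_0 B$; meanwhile $b(D)<_0 1B$, so the induction hypothesis gives some $m$ with $b(D)<_0 A[m]$. From $A[m]\vdash 0b(D)$ I would derive the crucial chain
\[
A[m+1]=B0A[m]\equiv B\wedge 0A[m]\vdash B\wedge 00b(D)\vdash B\wedge 0b(D)\equiv B0b(D),
\]
using $00\phi\vdash 0\phi$ to collapse the doubled modality and Lemma \ref{lemma:basicLemma}.\ref{lemma:basicLemma2} again. If $h(D)\equiv B$, then $D\equiv B0b(D)$ by Corollary \ref{theorem:equivalentSubstitutionOnLeftSeparatedByComma}, so $A[m+1]\vdash D$ and thus $D\leq_0 A[m+1]<_0 A[m+2]$. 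If instead $h(D)<_0 B$ (which covers the case $h(D)=\top\neq B$), Lemma \ref{LemmaMonotonoe} yields $D<_0 B0b(D)\leq_0 A[m+1]$.

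The main obstacle I anticipate is the subcase $h(D)\equiv B$, where the best derivation gives only $A[m+1]\vdash D$, so possibly $D\equiv A[m+1]$, and one must pass to $A[m+2]$. This minor ``carry'' phenomenon stems from the non-strict monotonicity of the right argument in the concatenation $X0Y$ and is precisely what prevents a one-step convergence, making the sequence $(A[n])_{n<\omega}$ an honest $\omega$-cofinal sequence for $1B$.
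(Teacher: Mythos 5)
Your proof is correct and follows essentially the same route as the paper: first show $A[n]<_0 A$ for all $n$ by the ``$1\phi\wedge 0\psi\vdash 1(\phi\wedge 0\psi)$'' axiom; then strong-induct on $|D|$, split $D$ into $h(D)$ and $b(D)$, compare $h(D)$ against $B$ using discreteness of $<_1$ on $\Worms_1$, and push the inductive bound on $b(D)$ through the concatenation $B0(\cdot)$.

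You are in fact more careful than the paper's own argument on one point. The paper deduces ``$h(C)\leq_1 B$ and $B\vdash 1h(C)$'' and then runs the chain $B\wedge 0A[n]\vdash 1h(C)\wedge 0\body C\vdash\cdots$; but $B\vdash 1h(C)$ fails precisely when $h(C)\equiv B$ (already for $B=1$ and $C=1$ or $C=101$, which do satisfy $C<_0 11$), so the displayed derivation does not cover that subcase. Your explicit dichotomy between $h(D)\equiv B$ and $h(D)<_0 B$ isolates it, and the ``carry'' to $A[m+2]$ via $D\leq_0 A[m+1]<_0 A[m+2]$ is a clean fix, since cofinality only requires finding \emph{some} $n$ with $D<_0 A[n]$, not the least one. (One can show the paper's stronger conclusion $C<_0A[n+1]$ does in fact hold even when $h(C)\equiv B$, using that $o(B)$ is $0$ or additively indecomposable, but this is not what the paper's chain proves; your route avoids needing it.)
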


\proof
That $A>A[n]$ for all $n$ follows by induction; the base case is easy since $A= 1B \vdash 0 B$. For the induction step,
\[
A\stackrel{\rm IH}\vdash 1 B \wedge 0A[n] \equiv 1 B 0A[n] \vdash 0B 0A[n] = 0A[n+1].
\]

Meanwhile, we prove by induction on the length of $C$ that if $C< A$ then $C<A[n]$ for some $n$. We assume $C\not=\top$, otherwise the claim is trivial. Thus, from $\body C<C<A$, the fact that $|\body C|<|C|$ and the induction hypothesis we obtain $\body C<A[n]$ for some $n$ so that also $0A[n]\vdash 0\body C$. 

Then, from $h(C)\leq C<_1 A$ we obtain $h(C)\leq B$ whence $h(C)\leq_1 B$ and $B\vdash 1h(C)$. Thus,
\[
A[n+1] = B 0A[n] \equiv B \wedge 0A[n] \vdash 1h(C) \wedge 0\body C \equiv 1h(C) 0\body C \vdash 0h(C) 0\body C
\]
whence $C<A[n+1]$.
\endproof

With the use of this lemma, we can now establish a relation between $o(A)$ and $o(1{\uparrow} A)$, as to determine $\sigma^1$:

\begin{lemma}\label{LemmSigmaOne}\ 
\begin{enumerate}
\item\label{item:uparrowRelatesToNoUparrow:LemmSigmaOne}
Given a worm $A$, we have that $o(1\uparrow A)=-1 + \omega^{o(A)}$;
\item\label{item:computationSigmaOne:LemmSigmaOne}
For each ordinal $\xi$ we have $\sigma^1 \xi = -1 + \omega^\xi$.
\end{enumerate}
\end{lemma}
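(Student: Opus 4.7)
The plan is to first derive item \ref{item:computationSigmaOne:LemmSigmaOne} from item \ref{item:uparrowRelatesToNoUparrow:LemmSigmaOne}: by Corollary \ref{theorem:orderAndArrows}, $\sigma^1 o(A)=o(1\uparrow A)$, and since $o\colon\Worms\to\ord$ is surjective (Lemma \ref{theorem:oDefinesAnIsomorphism}), for any ordinal $\xi$ we may pick $A$ with $o(A)=\xi$ and obtain $\sigma^1\xi=-1+\omega^\xi$. Thus the entire work goes into item \ref{item:uparrowRelatesToNoUparrow:LemmSigmaOne}, which I would prove by transfinite induction on $o(A)$, splitting into three cases according to Lemma \ref{LemmLimitWorm}: $A=\top$, $A$ a successor worm, and $A$ a limit worm.

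The base case is immediate: $1\uparrow \top=\top$ gives $0=-1+\omega^0$. In the limit case $h(A)\neq\top$, so $A=\sup_{B<A}B$. By Lemma \ref{theorem:UparrowIsContinuous}, $1\uparrow A=\sup_{B<A}(1\uparrow B)$, and applying $o$ (which commutes with suprema by Lemma \ref{theorem:oIsContinuous}) together with the induction hypothesis yields $o(1\uparrow A)=\sup_{B<A}(-1+\omega^{o(B)})=\sup_{\beta<o(A)}\omega^\beta=\omega^{o(A)}$, using normality of $\beta\mapsto\omega^\beta$ and the fact that $o(A)\in\sf Lim$ makes the initial $-1$ disappear.

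The interesting case, and the main obstacle, is the successor case $A=0A'$. Here $1\uparrow A=1B$ with $B=1\uparrow A'$, so Lemma \ref{theorem:FundamentalSequencesForWormsStartingWith1} gives a canonical fundamental sequence $1B=\sup_n (1B)[n]$ with $(1B)[0]=B$ and $(1B)[n+1]=B\,0\,(1B)[n]$. By Lemma \ref{theorem:basicPropertyOrder:successors}, $o((1B)[n+1])=o((1B)[n])+1+o(B)$, and the induction hypothesis gives $o(B)=-1+\omega^{o(A')}$. The subtle part is the subcase $o(A')=0$, where $B=\top$ and the initial term of the fundamental sequence is $0$ rather than $\omega^{o(A')}$; here one obtains $o((1B)[n])=n$ and the supremum is $\omega=-1+\omega^1=-1+\omega^{o(A)}$. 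When $o(A')\geq 1$, the value $\omega^{o(A')}$ is a limit so $o(B)=\omega^{o(A')}$, and the recurrence $o((1B)[n+1])=o((1B)[n])+1+\omega^{o(A')}=o((1B)[n])+\omega^{o(A')}$ yields $o((1B)[n])=\omega^{o(A')}\cdot(n+1)$, whose supremum over $n$ is $\omega^{o(A')+1}=\omega^{o(A)}=-1+\omega^{o(A)}$.

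The two places that require attention are therefore the bookkeeping of the $-1$ (which vanishes exactly when the exponent is a limit or positive), and verifying that the fundamental sequence of Lemma \ref{theorem:FundamentalSequencesForWormsStartingWith1} applies, for which we need $1\uparrow A'\in\Worms_1$ in the successor case; this is automatic from the definition of $1\uparrow$.
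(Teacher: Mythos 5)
Your proof is correct and follows essentially the same route as the paper: deduce item (2) from item (1) via surjectivity of $o$ and Corollary \ref{theorem:orderAndArrows}, then prove item (1) by transfinite induction with the base, limit, and successor cases, using Lemma \ref{theorem:FundamentalSequencesForWormsStartingWith1} and Lemma \ref{theorem:basicPropertyOrder:successors} in the successor case. The only superficial difference is that you split the successor case into $o(A')=0$ and $o(A')\geq 1$ to simplify the arithmetic, whereas the paper carries the $-1$ uniformly through the subsidiary induction ($o(B_n)=-1+\omega^{o(B)}\cdot(n+1)$); both handle the bookkeeping correctly.
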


\proof
We first prove \ref{item:uparrowRelatesToNoUparrow:LemmSigmaOne} by induction on $<_0$. For the base case, $A=1{\uparrow} A=\top$, we verify that $o(1{\uparrow} A) = 0=-1+\omega^0 = -1+\omega^{o(A)}$.

If $A$ is a limit worm, i.e.\ $h(A)\neq \top$, the claim follows from Lemmas \ref{theorem:oIsContinuous} and \ref{theorem:UparrowIsContinuous} 
since
\[
-1+ \omega^{o(A)}=\sup_{B<A}(-1 + \omega^{o(B)}) \stackrel{\rm IH}= \sup_{B<A}o(1{\uparrow}B)= o(1{\uparrow} \sup_{B<A}B) = o(1{\uparrow A}).
\]

If $A=0B$, then by Lemma \ref{theorem:FundamentalSequencesForWormsStartingWith1} we have that $1{\uparrow}A=\sup_{n<\omega}B_n$, where $B_0:=1{\uparrow}B$ and $B_{n+1}:=(1{\uparrow} B)0B_n$. Therefore $o(1\uparrow A)=\sup_{n<\omega}o(B_n)$. 

By an easy subsidiary induction on $n$ we now see that $o(B_{n}) = -1 + \omega^{o(B)}\cdot (n+1)$. For $n=0$ this is just the induction hypothesis of the main induction. For $n+1$ we apply Lemma \ref{theorem:basicPropertyOrder:successors} to obtain 
\[
o(B_{n+1})= o((1{\uparrow}B)0B_n) = o(B_n) + 1 + o((1{\uparrow}B)) 
\]
By the subsidiary induction we have that $ o(B_n) = -1 + \omega^{o(B)}\cdot (n+1)$ and by the main induction we have that $o((1{\uparrow}B)) = -1 + \omega^{o(B)}$ so that 
\[
o(B_n) + 1 + o((1{\uparrow}B)) = -1 + \omega^{o(B)}\cdot (n+1) + 1 + -1 + \omega^{o(B)} = -1 + \omega^{o(B)}\cdot (n+2)
\]
as was to be shown. We now conclude the argument by observing that for $A=0B$ we have
\[
o(1{\uparrow}A) = \sup_{n<\omega} o(1{\uparrow}B_n) =
\sup_{n<\omega} -1+ \omega^{o(B)}\cdot (n+1) = -1 + \omega^{o(B)+1}= -1 + \omega^{o(0B)}
\]
whence $o(1{\uparrow}A) = -1+ \omega^{o(A)}$, as claimed.\\
\medskip

To see Item \ref{item:computationSigmaOne:LemmSigmaOne}, choose an ordinal $\xi$ and a worm $A$ such that $o(A)=\xi$. By Theorem \ref{theorem:OrderTypeCalculus}.\ref{exo}, we have that $\sigma^1\x=\sigma^1 o(A) = o(1\uparrow A)$; but by the previous item this is equal to $-1+\omega^{o(A)}=-1+\omega^\xi$, as desired.
\endproof

Note that $-1+ \omega^\xi = \omega^\xi$ whenever $\xi\neq 0$.
At this point we may give a convenient breakdown of $o(A)$ in terms of its head, rest and body.

\begin{corollary}\label{CorAltCalc}
If $A$ is any worm, then
\[o(A)=o(r(A))+o(h(A))=o(\body A)+\omega^{o(1\downarrow h(A))}.\]
\end{corollary}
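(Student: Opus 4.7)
The plan is to prove both equalities by a case split on whether $r(A)=\top$ (equivalently, by Lemma \ref{LemmBodyDecomp}, whether $b(A)=\top$), and for the second equality additionally on whether $h(A)=\top$. In each case the identity reduces to an elementary ordinal arithmetic computation using the two workhorses already established: Theorem \ref{theorem:OrderTypeCalculus}.\ref{item:omegeSums:theorem:OrderTypeCalculus}, which tells us how $o$ behaves across a $0$, and Lemma \ref{LemmSigmaOne}.\ref{item:uparrowRelatesToNoUparrow:LemmSigmaOne}, which gives $o(h(A)) = o(1{\uparrow}(1{\downarrow}h(A))) = -1 + \omega^{o(1\downarrow h(A))}$ since $h(A)\in\Worms_1$.

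For the first equality $o(A)=o(r(A))+o(h(A))$, if $r(A)=\top$ then $A=h(A)$ and the identity collapses to $o(h(A))=0+o(h(A))$. Otherwise $r(A)=0\,b(A)$, so $A= h(A)0\,b(A)$, and applying Theorem \ref{theorem:OrderTypeCalculus}.\ref{item:omegeSums:theorem:OrderTypeCalculus} twice yields
\[
o(A) \;=\; o(b(A))+1+o(h(A)) \;=\; o(0\,b(A))+o(h(A)) \;=\; o(r(A))+o(h(A)).
\]

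For the second equality $o(r(A))+o(h(A))=o(b(A))+\omega^{o(1\downarrow h(A))}$, I would invoke Lemma \ref{LemmSigmaOne}.\ref{item:uparrowRelatesToNoUparrow:LemmSigmaOne} to rewrite $o(h(A)) = -1+\omega^{o(1\downarrow h(A))}$, and then handle the four combinations of $r(A)\in\{\top,0b(A)\}$ and $h(A)\in\{\top,\neq\top\}$. The crucial arithmetic step is that when $h(A)\neq\top$ we have $o(1\downarrow h(A))\geq 1$, so $\omega^{o(1\downarrow h(A))}$ is a limit that absorbs a leading $1$, giving $1+(-1+\omega^{o(1\downarrow h(A))})=\omega^{o(1\downarrow h(A))}$ by the very definition of the $-1+\cdot$ notation (Lemma \ref{theorem:BasicPropertiesOrdinalArithmetic}). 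In the case $r(A)=0\,b(A)$ this lets me compute $o(b(A))+1+o(h(A))=o(b(A))+\omega^{o(1\downarrow h(A))}$; in the case $r(A)=\top$, $h(A)\neq\top$ the $-1$ is absorbed directly by the limit $\omega^{o(1\downarrow h(A))}$; and in the case $r(A)=0\,b(A)$, $h(A)=\top$ both sides reduce to $o(b(A))+1=o(b(A))+\omega^0$.

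The only potential obstacle is bookkeeping with the edge case $h(A)=r(A)=\top$, i.e.\ $A=\top$, where the second equality reads $0=0+\omega^0=1$ and fails; so I would tacitly restrict that equality to $A\neq\top$ (consistent with the convention in Lemma \ref{LemmBodyDecomp}, where $b(A)$ is introduced for $A\neq\top$). Beyond this, the proof is just careful case management around when $\omega^{o(1\downarrow h(A))}$ is a limit versus $\omega^0=1$, which dictates whether the $-1$ and $+1$ cancel or must be tracked explicitly.
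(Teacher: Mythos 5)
Your proof is correct and takes essentially the same route as the paper, which simply cites Theorem \ref{theorem:OrderTypeCalculus} and Lemma \ref{LemmSigmaOne} and calls the corollary ``immediate''; your case analysis on $r(A)$ and $h(A)$ just fills in the ordinal-arithmetic details (the $1+(-1+\omega^\alpha)=\omega^\alpha$ absorption for $\alpha\geq 1$) that the one-liner elides. You are also right to flag the edge case $A=\top$, where the second equality reads $0=\omega^0=1$; the paper's ``any worm'' is too generous there, and the corollary is indeed only invoked later (in Lemma \ref{lemma:successorLemma}) for $h_\xi(A)\neq\top$, so the restriction is harmless. One small slip in your prose: $r(A)=\top$ is \emph{not} equivalent to $b(A)=\top$ (take $A=10$, for which $r(A)=0\neq\top$ but $b(A)=\top$), but this does not affect the argument since you only ever split on $r(A)$.
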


\proof
Immediate from Theorem \ref{theorem:OrderTypeCalculus} and Lemma \ref{LemmSigmaOne}.
\endproof

With a lemma similar to Lemma \ref{LemmSigmaOne} we can now characterize $\sigma^2$ since $2{\uparrow} = 1{\uparrow}\circ 1{\uparrow}$ and we just need to iterate Lemma \ref{LemmSigmaOne}. The more general fact that $(\alpha + \beta){\uparrow}= \alpha{\uparrow} \circ \beta{\uparrow}$ is reflected in the following lemma.

\begin{lemma}\label{theorem:eSupDefinesWeakHyperation}\ \\
\vspace{-.4 cm}
\begin{enumerate}
\item
$\sigma^0 = {\sf id}$;

\item
$\sigma^1 = e$ where $e(\xi) = -1 + \omega^\xi$;
\item
$\sigma^{\alpha + \beta} = \sigma^{\alpha}\circ \sigma^{\beta}$.
\end{enumerate}
\end{lemma}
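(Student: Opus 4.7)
The plan is that items (1) and (2) are essentially restatements of earlier results and require only a citation: item (1) is exactly Lemma \ref{theorem:sigmaZeroIsTheIdentity}, and item (2) is exactly Lemma \ref{LemmSigmaOne}.\ref{item:computationSigmaOne:LemmSigmaOne}. So the real content is item (3).

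For item (3), my approach would be to exploit the conjugation formula $\sigma^{\gamma} = \oAlg \circ \gamma\upAlg \circ \oAlg^{-1}$ given by Lemma \ref{theorem:ConjugationLemma} and reduce the claim to a corresponding additivity property of the shift operator $\upAlg$ on $\overline\Worms$. Concretely, Lemma \ref{theorem:uparrowProperties}.\ref{item:additivity:theorem:uparrowProperties} states that $\alpha \uparrow (\beta \uparrow A) = (\alpha+\beta)\uparrow A$ for every worm $A$. Since the map $\gamma\upAlg$ is well-defined on equivalence classes (Lemma \ref{theorem:IsomorphicWormFragments}), this lifts to the identity $(\alpha+\beta)\upAlg = \alpha\upAlg \circ \beta\upAlg$ viewed as maps $\overline\Worms \to \overline\Worms_{\alpha+\beta}$; note that the image of $\beta\upAlg$ lies in $\overline\Worms_\beta \subseteq \overline\Worms$, so the composition is well-defined.

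With this in hand, the computation is essentially a one-line conjugation trick: insert $\oAlg^{-1}\circ \oAlg = \mathrm{id}_{\overline\Worms}$ in the middle, namely
\[
\sigma^{\alpha+\beta} = \oAlg \circ (\alpha+\beta)\upAlg \circ \oAlg^{-1} = \oAlg \circ \alpha\upAlg \circ \beta\upAlg \circ \oAlg^{-1} = \oAlg \circ \alpha\upAlg \circ \oAlg^{-1} \circ \oAlg \circ \beta\upAlg \circ \oAlg^{-1} = \sigma^\alpha \circ \sigma^\beta.
\]

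I do not expect a genuine obstacle here; the only thing to be careful about is that the various compositions type-check, i.e.\ that the codomain of each factor sits inside the domain of the next, which is immediate from $\overline\Worms_\gamma \subseteq \overline\Worms$ and from the fact that $\oAlg$ is a bijection between $\overline\Worms$ and $\sf On$ (Lemma \ref{theorem:oDefinesAnIsomorphism}). The proof is therefore purely algebraic, and no induction on ordinals is needed.
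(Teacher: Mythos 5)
Your proof is correct and is essentially identical to the paper's: both cite Lemmas \ref{theorem:sigmaZeroIsTheIdentity} and \ref{LemmSigmaOne} for items (1) and (2), and both prove (3) by conjugating the additivity of the shift operator (Lemma \ref{theorem:uparrowProperties}.\ref{item:additivity:theorem:uparrowProperties}) through $\oAlg$ using Lemma \ref{theorem:ConjugationLemma}. The only difference is that you run the chain of equalities from $\sigma^{\alpha+\beta}$ to $\sigma^\alpha\circ\sigma^\beta$, while the paper runs it the other way.
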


\begin{proof}
The first item is Lemma \ref{theorem:sigmaZeroIsTheIdentity} and the second item is Lemma \ref{LemmSigmaOne}.

For the last item we see that
\begin{align*}
\sigma^{\alpha}\circ \sigma^{\beta} &= \oAlg\circ\alpha{\upAlg} \circ \oAlg^{-1}\circ \oAlg\circ\beta{\upAlg} \circ \oAlg^{-1}
= \oAlg\circ\alpha{\upAlg} \circ\beta{\Uparrow} \circ \oAlg^{-1}\\
&= \oAlg\circ(\alpha+\beta){\upAlg} \circ \oAlg^{-1}
= \sigma^{\alpha+\beta}.
\end{align*}
\end{proof}


Clearly this lemma, together with Theorem \ref{theorem:OrderTypeCalculus} completely determines the order-types for worms that only use natural numbers as ordinals.

\begin{example}\label{example:OrderComputationOfOmegaWorm}
$o(2103) = o(3) + 1+ o(21) = \sigma^3 o(0) + 1 + o(21) =  \sigma^3 (1) + 1 + o(21)= \sigma^1\circ \sigma^1\circ \sigma^1 (1) + 1 +o(21) = \omega^{\omega^\omega} +1+ o(21) = \omega^{\omega^\omega} +1+ \sigma^1 o(10)= \omega^{\omega^\omega} + 1+\sigma^1 o(10\top) =  \omega^{\omega^\omega} +1+ \sigma^1 (o(\top) + 1 + o(1))=  \omega^{\omega^\omega} +1+ \sigma^1\omega=  \omega^{\omega^\omega} + 1+{\omega^\omega}=\omega^{\omega^\omega} + {\omega^\omega}$.
\end{example}

\subsection{A recursive calculus for the  worm enumerators}

It is evident that Lemma \ref{theorem:eSupDefinesWeakHyperation} says nothing about the behavior of $\sigma^\alpha$ for additively indecomposable $\alpha$. To deal with those ordinals we have the following lemmas. 

%

\begin{lemma}\label{LemmLambdaLim}
If $\lambda$ is infinite and additively indecomposable then
\[
\lambda\uparrow (0A)=\sup_{\eta<\lambda}\eta(\lambda\uparrow A).
\]
\end{lemma}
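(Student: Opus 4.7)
The plan is to verify the identity in $\langle\overline{\Worms},<_0\rangle$ by showing the two opposite inequalities; note that $\lambda\uparrow(0A)=\lambda(\lambda\uparrow A)$ by the definition of the shift.

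For $\sup_{\eta<\lambda}\eta(\lambda\uparrow A)\leq_0\lambda(\lambda\uparrow A)$, it suffices to show $\eta(\lambda\uparrow A)<_0\lambda(\lambda\uparrow A)$ for each $\eta<\lambda$. Setting $\phi:=\lambda\uparrow A$ and chaining the RC axioms $\lambda\phi\vdash\eta\phi$ (for $\eta<\lambda$) and $\beta\phi\wedge\alpha\psi\vdash\beta(\phi\wedge\alpha\psi)$ (for $\alpha<\beta$) yields
\[
\lambda\phi\vdash\lambda\phi\wedge\eta\phi\vdash\lambda(\phi\wedge\eta\phi)\vdash\lambda(\eta\phi),
\]
so $\eta\phi<_\lambda\lambda\phi$. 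Since $\lambda\psi\vdash 0\psi$ is an axiom, the containment $<_\lambda\subseteq<_0$ gives the desired strict inequality.

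For the reverse inequality I would prove, by induction on $|D|$, that every $D<_0\lambda(\lambda\uparrow A)$ satisfies $D\leq_0\eta(\lambda\uparrow A)$ for some $\eta<\lambda$. The base case $D=\top$ uses $\eta=0$. In the inductive step, if $D\in\Worms_\lambda$, then $<_0$ agrees with $<_\lambda$ on $\Worms_\lambda$ (Lemma \ref{theorem:uparrowInvarianceProperties}.\ref{item:SmallerRelationExtendsLargerRelation:theorem:uparrowInvarianceProperties}), so $D<_\lambda\lambda(\lambda\uparrow A)$; the discreteness of $<_\lambda$ (Lemma \ref{theorem:ordersHaveMinimalElementAndAreDiscrete}.\ref{theorem:ordersHaveMinimalElementAndAreDiscrete:discrete}) then forces $D\leq_\lambda\lambda\uparrow A<_0 0(\lambda\uparrow A)$, so $\eta=0$ again works. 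Otherwise $D$ has a modality $<\lambda$, whence $r_\lambda(D)\neq\top$; applying the induction hypothesis to the strictly shorter worm $r_\lambda(D)$ and the previous subcase to $h_\lambda(D)\in\Worms_\lambda$ yields bounds $r_\lambda(D)\leq_0\eta'(\lambda\uparrow A)$ and $h_\lambda(D)\leq_0\lambda\uparrow A$, which are then merged into a single bound by choosing $\eta<\lambda$ (possible since $\lambda$ is additively indecomposable) and again invoking the RC axiom $\beta\phi\wedge\alpha\psi\vdash\beta(\phi\wedge\alpha\psi)$ to glue the pieces.

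The main obstacle is this combination step. A direct attempt fails because $\langle 0\rangle\phi\wedge\langle 0\rangle\psi\not\vdash\langle 0\rangle(\phi\wedge\psi)$ in general, so bounds on $h_\lambda(D)$ and $r_\lambda(D)$ do not automatically yield a bound on $D\equiv h_\lambda(D)\wedge r_\lambda(D)$. The resolution exploits the asymmetry of the RC combination axiom (which requires $\beta>\alpha$) together with the fact that $\lambda\uparrow A\in\Worms_\lambda$ dominates every $\eta(\lambda\uparrow A)$-structure with $\eta<\lambda$; this lets us absorb the $r_\lambda(D)$ contribution within a modality of size roughly $\eta'+1<\lambda$, and the whole of $D$ within a single $\eta(\lambda\uparrow A)$ with $\eta<\lambda$.
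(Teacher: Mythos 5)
Your easy direction and the overall strategy (induction on length, proving a bound $D \leq_0 \eta(\lambda\uparrow A)$ with $\eta<\lambda$ for each $D <_0 \lambda\uparrow(0A)$) match the paper, and your analysis of the gluing step is essentially correct. However, the induction itself has a genuine gap.

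You split the inductive step into the case $D\in\Worms_\lambda$ and the case ``$D$ has a modality $<\lambda$'', and in the latter you apply the induction hypothesis to $r_\lambda(D)$, claiming it is ``strictly shorter.'' This fails precisely when the \emph{first} modality of $D$ is already $<\lambda$: then by Definition \ref{DefHead}, $h_\lambda(D)=\top$ and $r_\lambda(D)=D$, so $|r_\lambda(D)|=|D|$ and no progress is made. Since $h_\lambda(D)=\top$ the ``$\Worms_\lambda$ subcase'' contributes nothing either, and the argument becomes circular: the bound you need on $r_\lambda(D)$ is exactly the bound you are trying to prove for $D$. Note that this is not a peripheral subcase -- worms like $D=\delta(\lambda\uparrow A')$ with $0<\delta<\lambda$ are precisely the kind of non-$\Worms_\lambda$ approximants one expects to see below $\lambda(\lambda\uparrow A)$.

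The paper resolves this by a different decomposition: if $D\notin\Worms_\lambda$, let $\gamma<\lambda$ be the minimum modality of $D$ and write $\gamma\downarrow D = B_1\, 0\, B_0$ with $B_1\in\Worms_1$, so $D=\gamma\uparrow(B_1 0 B_0)$. After this demotion the shifted worm is guaranteed to contain a $0$, hence both $B_1$ and $B_0$ have length strictly less than $|D|$ (even when $h_\lambda(D)=\top$). The induction hypothesis then applies to $B_0,B_1$, they are glued via the combination axiom as you sketch, and finally $\gamma\uparrow$ is reapplied; additive indecomposability of $\lambda$ is used exactly at the end to ensure $\gamma+\eta'<\lambda$. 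Your proof is missing this demotion-by-$\gamma$ idea, and without it the induction does not close.
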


\proof
It is evident that $\sup_{\eta<\lambda}\eta(\lambda\uparrow A)\leq \lambda (\lambda \uparrow A) = \lambda \uparrow (0A)$ so we shall show the other inequality by proving that for each $B<\lambda\uparrow (0A)$ there is some $\eta < \lambda$ so that $B\leq \eta (\lambda \uparrow A)$.
We distinguish two cases.

First assume that $B=\lambda\uparrow B'$. Then, $B'<0A$ and thus $B'\leq A$ so that $0(\lambda\uparrow B')\leq 0(\lambda \uparrow A)<\lambda\uparrow(0A)$.

Otherwise, there are $\gamma<\lambda$, a worm $B_1 \in \Worms_1$ and $B_0 \in \Worms$ such that $B=\gamma\uparrow(B_10B_0)$. By induction on length, there are $\eta_0,\eta_1<\lambda$ such that $B_i<\eta_i(\lambda\uparrow A)$. Letting $\eta'=1+\max\{\eta_0,\eta_1\}$ (so that $\eta'>0$) we see that $B_1<\eta'(\lambda \uparrow A)$ whence $B_1<_1\eta'(\lambda \uparrow A)$. Thus,  $\eta'(\lambda\uparrow A)\vdash 1B_1 \wedge 0B_0\vdash 1B_10B_0\vdash 0B_10B_0$ whence $B_10B_0<\eta'(\lambda\uparrow A)$. Since $\lambda$ is additively indecomposable we see that $\gamma + \lambda = \lambda$ and $\eta=\gamma+\eta'<\lambda$, while $B=\gamma\uparrow(B_10B_0)<\eta(\lambda\uparrow A)$, as needed.
\endproof

For any ordinal $\lambda$ we have that $\sigma^\lambda(0) =o (\top) = 0$. Moreover, since $\sigma^\lambda$ is continuous, we can compute $\sigma^\lambda$ on limit ordinals if we have computed the values for all smaller ordinals. Thus, we only need to study the behavior of $\sigma^\lambda$ on successor ordinals for which we have the next lemma.

\begin{lemma}\label{theorem:successorArgumentsAtIndecomposableStages}
Let $\lambda$ be an additively indecomposable limit ordinal. We have that 
\[
\sigma^{\lambda} (\beta +1) = \sup_{\eta < \lambda} \sigma^{\eta} (\sigma^{\lambda}(\beta) +1).
\]
\end{lemma}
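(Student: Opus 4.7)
The plan is to move the statement about ordinals into a statement about worms via the isomorphism $\oAlg$, apply the fundamental-sequence identity for limit worms in Lemma \ref{LemmLambdaLim}, and then translate back using continuity of $o$ and the identity $\sigma^\alpha o(B) = o(\alpha \uparrow B)$ (Corollary \ref{theorem:orderAndArrows}).

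More concretely, I would first choose a worm $A'$ with $o(A')=\beta$, which exists since $o\colon\Worms\to\ord$ is surjective (Lemma \ref{theorem:oDefinesAnIsomorphism}). Then $A:=0A'$ satisfies $o(A)=\beta+1$ (Lemma \ref{LemmOrdBasic}), and by Corollary \ref{theorem:orderAndArrows} we have
\[
\sigma^{\lambda}(\beta+1)=o(\lambda\uparrow(0A')).
\]
Next I would apply Lemma \ref{LemmLambdaLim} to obtain $\lambda\uparrow(0A')=\sup_{\eta<\lambda}\eta(\lambda\uparrow A')$, and then Lemma \ref{theorem:oIsContinuous} to push $o$ past the supremum, yielding
\[
\sigma^{\lambda}(\beta+1)=\sup_{\eta<\lambda} o\bigl(\eta(\lambda\uparrow A')\bigr).
\]

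The key algebraic step is then to recognize each term on the right as $\sigma^{\eta}(\sigma^{\lambda}(\beta)+1)$. For this, fix $\eta<\lambda$. Using the definition of $\uparrow$ we have $\eta\uparrow(0C)=\eta(\eta\uparrow C)$ for any worm $C$; taking $C=\lambda\uparrow A'$ and invoking Lemma \ref{theorem:uparrowProperties}.\ref{item:additivity:theorem:uparrowProperties} gives $\eta\uparrow(0(\lambda\uparrow A'))=\eta((\eta+\lambda)\uparrow A')$. Here I would use the additive indecomposability of $\lambda$: since $\eta<\lambda$, we have $\eta+\lambda=\lambda$, so $\eta\uparrow(0(\lambda\uparrow A'))=\eta(\lambda\uparrow A')$. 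Applying $o$ and using Corollary \ref{theorem:orderAndArrows} together with $o(0B)=o(B)+1$ (Lemma \ref{LemmOrdBasic}), I get
\[
o\bigl(\eta(\lambda\uparrow A')\bigr)=\sigma^{\eta}\bigl(o(\lambda\uparrow A')+1\bigr)=\sigma^{\eta}\bigl(\sigma^{\lambda}(\beta)+1\bigr).
\]
Taking suprema over $\eta<\lambda$ completes the proof.

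The argument is essentially bookkeeping, so there is no deep obstacle; the one place requiring care is the algebraic manipulation $\eta\uparrow(0(\lambda\uparrow A'))=\eta(\lambda\uparrow A')$, which is exactly where additive indecomposability of $\lambda$ enters and without which the identification of the right-hand side would fail.
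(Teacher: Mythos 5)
Your proof is correct and follows essentially the same route as the paper: pick a representative worm, apply Lemma \ref{LemmLambdaLim}, push $o$ through the supremum, and use Corollary \ref{theorem:orderAndArrows} to translate back into $\sigma$-terms. The only cosmetic difference is where additive indecomposability enters: the paper exploits $\eta\downarrow B=B$ (i.e.\ $-\eta+\lambda=\lambda$) to rewrite $\eta B$ as $\eta\uparrow(0(\eta\downarrow B))$, while you exploit $\eta+\lambda=\lambda$ to recognize $\eta(\lambda\uparrow A')$ as $\eta\uparrow(0(\lambda\uparrow A'))$ directly; these are the same fact used from the two sides.
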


\begin{proof}
Pick $B'$ so that $o(0B') = \beta + 1$ and let $B:= \lambda\uparrow B'$ so that by Corollary \ref{theorem:orderAndArrows} we obtain
\begin{equation}\label{equation:one}
o(B) = o(\lambda \uparrow B') = \sigma^\lambda o(B') = \sigma^\lambda \beta.
\end{equation}

Moreover, as $\lambda$ is additively indecomposable, we see that $-\eta+\lambda = \lambda$ for any $\eta<\lambda$. In particular we get that 
\begin{equation}\label{equation:two}
\eta\downarrow B \ =\  B \ \ \ \ \ \ \mbox{ for any $\eta<\lambda$}.
\end{equation}
By Lemma \ref{LemmLambdaLim}, $\lambda B=\sup_{\eta<\lambda}\eta B,$ so that
\begin{equation}\label{equation:three}
o(\lambda B) = \sup_{\eta<\lambda}\  o(\eta B).
\end{equation}
Recall that $o(0B') = \beta +1$ so that we can reason
\[
\begin{array}{llll}
\sigma^{\lambda} (\beta +1) & = \sigma^{\lambda} (o(0B')) & \\
 & = o(\lambda B) & \mbox{by Corollary \ref{theorem:orderAndArrows}}\\
 & = \sup_{\eta<\lambda} \ o(\eta B) &  \mbox{by (\ref{equation:three})}\\
 & = \sup_{\eta<\lambda}\  \sigma^{\eta} o(0 (\eta\downarrow B)) & \mbox{by Corollary \ref{theorem:orderAndArrows}}\\
 & = \sup_{\eta<\lambda} \ \sigma^{\eta}(o(\eta\downarrow B)+1) & \mbox{by Lemma \ref{LemmOrdBasic}}\\
 & = \sup_{\eta<\lambda}\ \sigma^{\eta}(o(B)+1) & \mbox{by (\ref{equation:two})}\\
 & = \sup_{\eta<\lambda}\ \sigma^{\eta}(\sigma^\lambda(\beta)+1) & \mbox{by \eqref{equation:one}}.\\
\end{array}
\]
\end{proof}

Now that we have proved this lemma we finally have fully determined all functions $\sigma^\alpha$.

\begin{theorem}\label{theorem:RecursiveHyperexponential}
For ordinals $\alpha$ and $\beta$, the values $\sigma^\alpha(\beta)$ are determined by the following recursion.
\begin{enumerate}
\item
$\sigma^\alpha 0 =0$ for all $\alpha \in \ord$;
\item
$\sigma^1 = e$ with $e(\xi) = -1 + \omega^\xi$;
\item
$\sigma^{\alpha + \beta} = \sigma^{\alpha} \sigma^{\beta}$;
\item
$\sigma^\alpha (\lambda) = \sup_{\beta<\lambda} \sigma^\alpha (\beta)$ for limit ordinals $\lambda$;

\item
$\sigma^{\lambda} (\beta +1) = \sup_{\eta < \lambda}\  \sigma^{\eta} (\sigma^{\lambda}(\beta) +1)$ for $\lambda$ an additively indecomposable limit ordinal.

\end{enumerate}
\end{theorem}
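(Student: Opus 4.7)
My plan is to verify the five clauses one by one, noting that four of them are essentially restatements of results already proved, and then argue that they suffice to determine $\sigma^\alpha(\beta)$ on all pairs by a transfinite recursion.

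Clause (1) follows immediately from the fact that, for every $\alpha$, the empty worm $\top$ lies in $\Worms_\alpha$ and is $<_\alpha$-minimal by Lemma \ref{theorem:ordersHaveMinimalElementAndAreDiscrete}.\ref{theorem:ordersHaveMinimalElementAndAreDiscrete:minimalElement}; since $\sigma^\alpha$ enumerates $o(\Worms_\alpha)$ in increasing order, its first value must equal $o(\top)=0$. Clause (2) is exactly Lemma \ref{LemmSigmaOne}.\ref{item:computationSigmaOne:LemmSigmaOne}, restated via Lemma \ref{theorem:eSupDefinesWeakHyperation}. Clause (3) is the third item of Lemma \ref{theorem:eSupDefinesWeakHyperation}, obtained by composing the isomorphism $\oAlg\circ(\cdot)\upAlg\circ \oAlg^{-1}$ and using that $(\alpha+\beta)\upAlg = \alpha\upAlg\circ\beta\upAlg$ on $\overline\Worms$. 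Clause (4) is the content of Lemma \ref{theorem:hyperexponentiationYieldNormalFunctions}: each $\sigma^\alpha$ is normal, in particular continuous at limits. Clause (5) is exactly Lemma \ref{theorem:successorArgumentsAtIndecomposableStages}.

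What remains, and is really the only substantive item in the proof, is to explain why these clauses together actually compute $\sigma^\alpha(\beta)$ for every pair of ordinals. I will argue this by induction on $\alpha$ with a secondary induction on $\beta$. If $\alpha=0$, then by clause (3) applied with both superscripts equal to $0$, $\sigma^0\circ\sigma^0=\sigma^0$, and more usefully clause (3) with $\sigma^{0+\beta}=\sigma^0\circ\sigma^\beta=\sigma^\beta$ forces $\sigma^0$ to be the identity on the image of any $\sigma^\beta$; choosing $\beta$ so that this image is all of $\sf On$ (e.g.\ $\beta=1$, whose image is cofinal and, together with clauses (1) and (2), covers every ordinal), we recover $\sigma^0=\mathrm{id}$. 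If $\alpha>0$, write $\alpha$ in Cantor normal form $\alpha=\omega^{\delta_1}+\dots+\omega^{\delta_n}$; clause (3) reduces the computation of $\sigma^\alpha$ to the additively indecomposable summands $\omega^{\delta_i}$. For $\alpha=1$ clause (2) is a closed form. For $\alpha=\omega^\delta$ with $\delta>0$ (so $\alpha$ an additively indecomposable limit $\geq\omega$), clause (1) handles $\beta=0$, clause (4) handles limit $\beta$, and clause (5) handles successor $\beta$: the outer supremum on the right-hand side of (5) ranges only over $\eta<\alpha$, so by the outer induction hypothesis $\sigma^\eta$ is already available, and $\sigma^\alpha(\beta)$ under the inner supremum is available by the inner induction hypothesis on $\beta$.

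The hard part, such as it is, was already carried out in the proofs of Lemmas \ref{LemmSigmaOne}, \ref{theorem:eSupDefinesWeakHyperation}, \ref{theorem:hyperexponentiationYieldNormalFunctions}, and \ref{theorem:successorArgumentsAtIndecomposableStages}; the present theorem is a compilation. The only mild subtlety to watch in the write-up is to confirm that the recursion in clause (5) is indeed well-founded, i.e.\ that the auxiliary call $\sigma^\eta(\sigma^\lambda(\beta)+1)$ is unproblematic because $\eta<\lambda$ drops the outer superscript strictly, so the induction on $\alpha$ (not on $\beta$) carries the recursion through.
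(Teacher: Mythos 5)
Your identification of each clause with the corresponding lemma matches the paper exactly: the paper presents no separate proof of the theorem, treating it as an immediate compilation of Lemma \ref{theorem:eSupDefinesWeakHyperation} (clauses 1--3), Lemma \ref{theorem:hyperexponentiationYieldNormalFunctions} (clause 4) and Lemma \ref{theorem:successorArgumentsAtIndecomposableStages} (clause 5). Your extra paragraph checking that the recursion is well-founded is a reasonable thing to include; the paper simply remarks that ``it is clear that this theorem embodies a full calculus,'' so you are fleshing out something the authors left implicit.

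However, the argument you give for the base case $\alpha=0$ does not work. You claim that the image of $\sigma^1=e$, ``together with clauses (1) and (2), covers every ordinal,'' and hence that $\sigma^{0+1}=\sigma^0\circ\sigma^1=\sigma^1$ forces $\sigma^0=\mathrm{id}$. This is false: the image of $e$ is $\{0\}\cup\{\omega^\xi:\xi\geq 1\}$, a very sparse set, and clauses (1) and (2) add nothing that fills the gaps (clause (1) only says $\sigma^0(0)=0$). Nor does cofinality help, since you would then need to know already that $\sigma^0$ is increasing and continuous, which is not one of the five clauses. The identity $\sigma^0\circ e=e$ therefore only tells you that $\sigma^0$ fixes the sparse range of $e$, not that $\sigma^0=\mathrm{id}$. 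The correct way to recover $\sigma^0$ from the clauses is to apply clause (3) the other way around: $\sigma^1=\sigma^{1+0}=\sigma^1\circ\sigma^0$, i.e.\ $e=e\circ\sigma^0$, and then use the fact that $e(\xi)=-1+\omega^\xi$ is injective (being strictly increasing) to conclude $\sigma^0=\mathrm{id}$. With that repair, your well-foundedness argument goes through, and the rest of your write-up is correct.
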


It is clear that this theorem embodies a full calculus. Let us see a simple example.

\begin{example}
$\sigma^\omega 1 = \varepsilon_0$ so that $o(\la \omega \ra \top) = \varepsilon_0$ with $\varepsilon_0 := \sup \{ \omega, {\omega^\omega}, \omega^{\omega^\omega}, \ldots\}$.
\end{example}

\begin{proof}
By definition, $\sigma^1(1) = e1 = \omega$. Consequently, $\sigma^2(1)=\sigma^1 \sigma^1(1) = \omega^\omega$ and likewise $\sigma^3(1)=\omega^{\omega^\omega}$, etc. Thus, $\sigma^\omega 1 = \sup_{\eta < \omega} \sigma^\eta (\sigma^\omega (0) +1)= \sup_{\eta < \omega} \sigma^\eta (1) = \varepsilon_0$.
\end{proof}

It should not come as a surprise that $\sigma^\omega(1)$ is a fixpoint of $e$ and something more general holds. If $\lambda$ is additively indecomposable, then $\eta + \lambda = \lambda$ for all $\eta < \lambda$. Thus 
\[
\sigma^\lambda (\xi) = \sigma^{\eta +\lambda} (\xi) = \sigma^{\eta} \sigma^{\lambda} (\xi)
\]
so that for each $\xi$ we see that $\sigma^\lambda (\xi)$ is a fixpoint of $\sigma^\eta$ for each $\eta<\lambda$; this is very similar to what happens with the Veblen hierarchy. In Theorem \ref{theorem:hyperationsVersusVeblenProgressions} the relation between the worm enumerators and the one-placed Veblen functions is made precise.




%








\section{Hyperations and Cohyperations}\label{section:OrdinalArithmetic}


%

A main theme of this paper is how to compute given a worm $A$ and ordinal $\xi$ its $\xi$-consistency order-type $o_\xi(A)$. In Corollary 
\ref{corollary:reductionXiOrdertypeToPlainOrdertype} 
we have reduced the $o_\xi$ order-types to the plain $o$ order-type. Subsequently, in Theorem \ref{theorem:OrderTypeCalculus} we provided a calculus for $o$ in terms of the so-called worm-enumerators $\sigma^\alpha$. Finally, in Theorem \ref{theorem:RecursiveHyperexponential} we worked out a recursive calculus for computing the worm-enumerators thereby completing all ingredients needed to compute any order-type $o_\xi(A)$.

In the final section of this paper we wish to characterize what, given a worm $A$, the sequence $\la o_\xi(A) \ra_{\xi \in {\sf On}}$ can look like. It shall turn out that to give a smooth characterization of these sequences, we need certain well-behaved left-inverses of our worm enumerator functions. These inverses can be computed within the general framework of what the authors call \emph{hyperations} and \emph{co-hyperations}.

Hyperations are a kind of transfinite iteration of certain ordinal functions and were introduced and studied in full generality by the authors in \cite{FernandezJoosten:2012:Hyperations}. In this section we shall briefly state --without proof-- the main properties of hyperations and the related \emph{cohyperations} that we need in the remainder of this paper and refer to \cite{FernandezJoosten:2012:Hyperations} for further background. Moreover, we shall prove that the worm-enumerators $\sigma^\alpha$ are the hyperation of a special form of ordinal exponentiation. For definitions and basic properties of ordinals, we refer the reader to \cite{Jech:2002:SetTheory,Pohlers:2009:PTBook}. 

\subsection{Hyperations}\label{section:hyperations}

As mentioned before, {\em hyperation} is a form of transfinite iteration of normal functions. It is based on the additivity of finite iterations, that is $f^{m+n} = f^m\circ f^n$ generalizing this to the transfinite setting. Let us first recall the definition of a normal function.

We call a function on the ordinals $f$ \emph{increasing} if $\alpha < \beta$ implies $f(\alpha) < f(\beta)$. An ordinal function is called \emph{continuous} if $\sup_{\zeta<\xi}f(\zeta) = f(\xi)$ for all limit ordinals $\xi$. Functions which are both increasing and continuous are called \emph{normal}.

\begin{defi}[Weak hyperation]\label{definition:WeakHyperation}
A {\em weak hyperation} of a normal funcion $f$ is a family of normal functions $\langle g^{\x}\rangle_{\xi\in\mathsf{On}}$ such that
\begin{enumerate}
\item $g^0{\x}={\x}$ for all ${\x}$,
\item $g^1=f$,
\item $g^{{\x}+\zeta}=g^{\x} g^\zeta$.
\end{enumerate}
\end{defi}

\emph{Par abuse de langage} we will often write just $g^{\x}$ instead of $\langle g^{\x}\rangle_{\xi\in\mathsf{On}}$. In Lemma \ref{theorem:eSupDefinesWeakHyperation} we have proven that the family of worm enumerators $\sigma^\xi$ is a weak hyperation of the function $e$ defined as $\xi \mapsto -1 + \omega^\xi$.

Weak hyperations are not unique. However, if we impose a minimality condition, we can prove that there is a unique minimal hyperation.

\begin{defi}[Hyperation]
A weak hyperation $g^{\x}$ of $f$ is {\em minimal} if it has the property that, whenever $h^{\x}$ is a weak hyperation of $f$ and ${\x},\zeta$ are ordinals, then $g^{\x}\zeta\leq h^{\x}\zeta$.

If $f$ has a (unique) minimal weak hyperation, we call it {\em the hyperation} of $f$ and denote it $f^{\x}$.
\end{defi}

We shall now prove that the worm enumerators $\sigma^\xi$ are the hyperation of the function $e$.

\begin{theorem}\label{theorem:wormenumeratorsAreMinimalHyperation} 
$\sigma^\alpha$ is the hyperation of the function $e \ : \ \xi \mapsto -1 + \omega^\xi$.
\end{theorem}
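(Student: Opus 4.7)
The claim combines two things: that $\langle \sigma^\alpha\rangle$ is a weak hyperation of $e$, and that it is the minimal such family. The first part is already in hand: Lemma \ref{theorem:hyperexponentiationYieldNormalFunctions} says each $\sigma^\alpha$ is normal, and Lemma \ref{theorem:eSupDefinesWeakHyperation} gives $\sigma^0=\mathrm{id}$, $\sigma^1=e$, and $\sigma^{\alpha+\beta}=\sigma^\alpha\circ\sigma^\beta$. So only minimality needs to be proved: for any weak hyperation $h^\xi$ of $e$, and for all ordinals $\xi,\zeta$, we must show $\sigma^\xi\zeta\leq h^\xi\zeta$. The plan is to imitate the recursive characterization of $\sigma^\xi$ given by Theorem \ref{theorem:RecursiveHyperexponential} and show that every clause translates into a lower bound satisfied by $h^\xi$.

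Fix $h^\xi$ a weak hyperation of $e$ and proceed by transfinite induction on $\xi$. The cases $\xi=0$ and $\xi=1$ are immediate from the axioms of weak hyperation. If $\xi=\alpha+\beta$ with $0<\alpha,\beta<\xi$, then by the composition axiom and the outer induction hypothesis applied twice (first to $\beta$, then to $\alpha$, using normality of $\sigma^\alpha$ to preserve inequalities), we get
\[
\sigma^{\alpha+\beta}\zeta=\sigma^\alpha\sigma^\beta\zeta\leq \sigma^\alpha h^\beta\zeta\leq h^\alpha h^\beta\zeta=h^{\alpha+\beta}\zeta.
\]

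The main obstacle is the remaining case, where $\xi=\lambda$ is an additively indecomposable limit (so $\lambda=\omega^\gamma$ for some $\gamma>0$) and the decomposition trick is unavailable. Here I would perform a secondary induction on $\zeta$. The case $\zeta=0$ is trivial since both sides are $0$, and for $\zeta$ a limit the inequality transfers through the supremum using normality of $h^\lambda$. The crucial step is $\zeta=\beta+1$: since $\lambda$ is additively indecomposable, $\eta+\lambda=\lambda$ for every $\eta<\lambda$, so the composition law forces $h^\lambda=h^\eta\circ h^\lambda$. Because $h^\lambda$ is increasing, $h^\lambda(\beta+1)\geq h^\lambda(\beta)+1$, and therefore
\[
h^\lambda(\beta+1)=h^\eta h^\lambda(\beta+1)\geq h^\eta\bigl(h^\lambda(\beta)+1\bigr).
\]
Applying the outer induction hypothesis to $\eta<\lambda$ and the inner hypothesis to $\beta$, together with monotonicity, yields $h^\eta\bigl(h^\lambda(\beta)+1\bigr)\geq \sigma^\eta\bigl(\sigma^\lambda(\beta)+1\bigr)$. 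Taking the supremum over $\eta<\lambda$ and invoking clause (5) of Theorem \ref{theorem:RecursiveHyperexponential} gives $h^\lambda(\beta+1)\geq \sigma^\lambda(\beta+1)$, closing the induction.

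The hard point is really the bookkeeping of the additively indecomposable case: one must exploit $\eta+\lambda=\lambda$ to turn $h^\lambda$ into an infinite composition on the nose, and combine this with the fact that the family $\sigma^\xi$ was defined precisely so that the successor value at an indecomposable stage is a supremum, not merely an upper bound. Once these two observations are lined up the argument is routine, and the minimality asserted in the definition of hyperation follows; together with the already-established weak-hyperation properties, this shows $\sigma^\alpha$ is the hyperation of $e$.
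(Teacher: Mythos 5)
Your proof is correct and follows essentially the same path as the paper's: verify the weak-hyperation axioms via Lemma \ref{theorem:eSupDefinesWeakHyperation}, then prove minimality by an outer induction on the exponent, handling decomposable ordinals via the composition law and isolating the additively indecomposable limit case for a secondary induction on the argument that exploits $\eta+\lambda=\lambda$ and Lemma \ref{theorem:successorArgumentsAtIndecomposableStages}. The only cosmetic difference is that you run the chain of inequalities at the successor stage from the $h^\lambda$ side whereas the paper starts from the $\sigma^\lambda$ side; the content is identical.
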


\begin{proof}
The properties 1.--3. of Lemma \ref{theorem:eSupDefinesWeakHyperation} express that the $\sigma^\alpha$ are a weak hyperation of $e$. To see that it is the unique hyperation we only need to check for minimality.

So, suppose that $\{f^\alpha\}_{\alpha \in {\sf Ord}}$ is a collection of normal functions such that 1.--3. holds. By induction on $\alpha$ we shall see that $\sigma^\alpha(\beta) \leq f^{\alpha} (\beta)$. 

For $\alpha =0$ and $\alpha =1$ this is obvious and for additively decomposable ordinals we see that $\sigma^{\alpha + \beta} = \sigma^\alpha \sigma^\beta \leq_{\sf IH} f^\alpha f^\beta = f^{\alpha + \beta}$.

So, let $\alpha$ be an indecomposable limit ordinal. We proceed by an auxiliary induction on $\beta$ to show that $\sigma^\alpha(\beta) \leq f^{\alpha} (\beta)$ which clearly holds for $\beta =0$. As both $f^\alpha$ and $\sigma^\alpha$ are continuous, we only need to consider successor ordinals in which case, by Lemma \ref{theorem:successorArgumentsAtIndecomposableStages} we see that
\begin{equation}\label{SigmaLess}
\sigma^{\alpha} (\beta +1) = \sup_{\alpha' < \alpha}\  \sigma^{\alpha'} (\sigma^{\alpha}(\beta) +1) \leq_{\sf IH}  \sup_{\alpha' < \alpha} \ f^{\alpha'} (f^{\alpha}(\beta) +1). \end{equation}
As for $\alpha' < \alpha$ we have $\alpha'+\alpha = \alpha$, by Property 3, we see that $f^\alpha (\beta +1) = \sup_{\alpha'<\alpha}\  f^{\alpha'}f^\alpha (\beta +1)$. But, as $f^\alpha$ is monotone we also see that $f^\alpha (\beta +1) \geq f^\alpha (\beta)+1$ whence by monotonicity of all of the $f^{\alpha'}$ we see that 
\[
f^\alpha (\beta +1) = \sup_{\alpha'<\alpha}\  f^{\alpha'}f^\alpha (\beta +1) \geq \sup_{\alpha'<\alpha}\  f^{\alpha'}(f^\alpha (\beta) +1).
\]
We combine this with \eqref{SigmaLess} to conclude that $\sigma^{\alpha} (\beta +1) \leq f^\alpha (\beta +1)$.
\end{proof}


Since we have proven that $e^\xi = \sigma^\xi$, from now on we shall only use the notation based on $e$. We call the family $e^\xi$ \emph{hyperexponentials}.

Hyperations in general allow for an explicit recursive definition very much in the style of Theorem \ref{theorem:RecursiveHyperexponential}. Moreover, there turns out to be a close connection between hyperations and Veblen progressions: 

\begin{definition}
For $f$ a normal function, the Veblen progression based on $f$ is denoted by $\la f_\xi\ra_{\xi \in {\sf On}}$ and defined by $f_0 := f$ and for $\xi>0$, $f_\xi$ is the normal function that enumerates in increasing order the ordinals which are simultaneous fixpoints for all the $f_\zeta$ for $\zeta<\xi$.
\end{definition}

The Veblen progression based on $\varphi (\xi) := \omega^\xi$ are the well-known one-place Veblen functions $\varphi_\xi$. Note that $\varphi (\xi) = e(\xi)$ for $\xi\neq 0$. In \cite{FernandezJoosten:2012:Hyperations} it is proven that hyperations can be seen as a natural refinement of Veblen progressions.

\begin{theorem}\label{theorem:hyperationsVersusVeblenProgressions}
For $f$ a normal function we have that $f^{\omega^\xi} = f_\xi$.
\end{theorem}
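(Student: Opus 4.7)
The plan is to proceed by transfinite induction on $\xi$. The base case $\xi = 0$ is immediate: $\omega^0 = 1$, so $f^{\omega^0} = f^1 = f = f_0$.

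For the successor step $\xi = \zeta + 1$, I would exploit the additive indecomposability of $\omega^{\zeta+1}$. Since $\eta + \omega^{\zeta+1} = \omega^{\zeta+1}$ for every $\eta < \omega^{\zeta+1}$, the hyperation law gives $f^\eta \circ f^{\omega^{\zeta+1}} = f^{\omega^{\zeta+1}}$. Taking $\eta = \omega^\zeta$ and invoking the induction hypothesis $f^{\omega^\zeta} = f_\zeta$, this shows that every value of $f^{\omega^{\zeta+1}}$ is a fixed point of $f_\zeta$. Since $f^{\omega^{\zeta+1}}$ is normal (hence enumerates a closed unbounded class), it suffices to verify that it enumerates \emph{all} fixed points of $f_\zeta$. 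I would establish this by a subsidiary induction on $\beta$ showing $f^{\omega^{\zeta+1}}(\beta) = f_{\zeta+1}(\beta)$. For the successor stage of this inner induction one invokes the general-$f$ analog of item 5 of Theorem \ref{theorem:RecursiveHyperexponential} (proved in \cite{FernandezJoosten:2012:Hyperations}): setting $\alpha := f^{\omega^{\zeta+1}}(\beta)$,
\[ f^{\omega^{\zeta+1}}(\beta+1) = \sup_{\eta < \omega^{\zeta+1}} f^\eta(\alpha + 1). \]
Because $\{\omega^\zeta \cdot n : n < \omega\}$ is cofinal in $\omega^{\zeta+1}$ and each $f^\eta$ is inflationary (so hyperation is monotone in its exponent), the right-hand side collapses to $\sup_{n < \omega} f_\zeta^n(\alpha+1)$, which is the standard formula for the least fixed point of $f_\zeta$ strictly above $\alpha$. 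Continuity in $\beta$, which comes for free from the normality of $f^{\omega^{\zeta+1}}$, then takes care of limit stages.

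For limit $\xi$ the same script applies almost verbatim: $\omega^\xi$ is additively indecomposable and $\omega^\zeta + \omega^\xi = \omega^\xi$ for $\zeta < \xi$, so the range of $f^{\omega^\xi}$ lies in the class of simultaneous fixed points of all $f^{\omega^\zeta} = f_\zeta$ with $\zeta < \xi$, which is exactly the class enumerated by $f_\xi$. Pointwise agreement then follows by the analogous recursion and cofinality argument, now with $\{\omega^\zeta : \zeta < \xi\}$ serving as the cofinal sequence in $\omega^\xi$.

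The step I expect to be most delicate is pinning down the base value $f^{\omega^\xi}(0)$: one needs it to coincide with the \emph{least} simultaneous fixed point of the relevant $f_\zeta$ rather than some larger one. Closure of the range alone does not force this, so the minimality clause built into the definition of hyperation must be exploited; consequently this step is the one that most directly relies on the explicit construction of $f^\xi$ developed in \cite{FernandezJoosten:2012:Hyperations} rather than on the abstract defining properties alone.
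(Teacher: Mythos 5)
The paper does not actually prove this theorem; it is stated with a pointer to the companion paper \cite{FernandezJoosten:2012:Hyperations}, so there is no internal proof to compare against and your attempt is necessarily a reconstruction.

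Your scaffolding is sound. The central lever---that additive indecomposability of $\omega^\xi$ together with $g^{\eta+\zeta}=g^\eta g^\zeta$ and the induction hypothesis forces every value of $f^{\omega^\xi}$ to be a simultaneous fixed point of the $f_\zeta$ for $\zeta<\xi$---is exactly right, and since $f_\xi$ enumerates \emph{all} such fixed points in increasing order, it already yields the pointwise bound $f^{\omega^\xi}(\alpha)\geq f_\xi(\alpha)$. The one caveat I would add to your outline is in the sentence ``Since $f^{\omega^{\zeta+1}}$ is normal (hence enumerates a closed unbounded class), it suffices to verify that it enumerates \emph{all} fixed points'': a club subclass of a club class need not be the whole class, so the clause in parentheses does no real work; the content is carried entirely by the subsidiary induction you then set up.

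The worry you flag at the end, however, is unfounded: you do not need the explicit recursive construction of $f^\xi$ to pin down the base value, and in fact you do not need the recursion at limit exponents at all. The converse inequality follows directly from the abstract minimality clause by exhibiting a concrete competing weak hyperation built from the Veblen progression itself. For $\alpha$ with Cantor normal form $\omega^{\xi_1}+\cdots+\omega^{\xi_n}$ ($\xi_1\geq\cdots\geq\xi_n$) set $g^\alpha:=f_{\xi_1}\circ\cdots\circ f_{\xi_n}$ and $g^0:={\sf id}$. Each $g^\alpha$ is normal as a composition of normal functions, $g^1=f_0=f$, and the additivity $g^\alpha g^\beta=g^{\alpha+\beta}$ holds because the terms $\omega^{\xi_i}$ of $\alpha$ absorbed when forming the normal form of $\alpha+\beta$ have exponents strictly below the leading exponent of $\beta$, so the corresponding factors $f_{\xi_i}$ act as the identity on the range of $g^\beta$. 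Hence $g$ is a weak hyperation of $f$, and minimality gives $f^{\omega^\xi}(\alpha)\leq g^{\omega^\xi}(\alpha)=f_\xi(\alpha)$ for every $\alpha$, including $\alpha=0$. Combined with the lower bound above, this closes your argument cleanly and stays entirely within the abstract defining properties of hyperation; the inner induction via the limit-exponent recursion is then redundant.
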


It should be clear that this theorem provides an easy link between the hyperexponentials and the earlier known Veblen functions. In particular, if we write some ordinal $\xi$ in its unique Cantor Normal Form with base $\omega$ so that for some $n\geq 0$ we have $\xi = \omega^{\xi_1} + \ldots +\omega^{\xi_n}$ and $\xi_i \geq \xi_{i+1}$ for all $i<n$, then for $\alpha >0$ and $\xi \notin {\sf Lim}$ (for $\xi \in {\sf Lim}$ we should replace the $\alpha$ on the right side by $-1+\alpha$):
 
\[
e^\xi (\alpha) = \varphi_{\xi_1}\circ \ldots \circ \varphi_{\xi_n}(\alpha).
\]


\subsection{Cohyperations}

We shall see that in order to relate the different $\xi$-consistency order-types $o_\xi$ to each other we shall need left inverses to hyperexponentials. Hyperations are injective and hence invertible on the left; however, a left-inverse of a hyperation is typically not a hyperation, but a different form of transfinite iteration we call {\em cohyperation}. 

Instead of transfinitely iterating normal functions we shall consider \emph{initial functions}. We will say a function $f$ is {\em initial} if, whenever $I$ is an initial segment (i.e., of the form $[0,\beta)$ for some $\beta$), then $f(I)$ is an initial segment. It is easy to see that $f\xi \leq \xi$ for initial functions $f$.


\begin{defi}[Cohyperation]
A {\em weak cohyperation} of an initial function $f$ is a family of initial functions $\langle g^{\x}\rangle_{\xi\in\mathsf{On}}$ such that
\begin{enumerate}
\item $g^0{\x}={\x}$ for all ${\x}$,
\item $g^1=f$,
\item $g^{{\x}+\zeta}=g^\zeta g^{\x}$.
\end{enumerate}

If $g$ is maximal in the sense that $g^{\x}\zeta\geq h^{\x}\zeta$ for every weak cohyperation $h$ of $f$ and all ordinals $\xi,\zeta$, we say $g$ is {\em the cohyperation} of $f$ and write $f^{\x}=g^{\x}$.
\end{defi}

Both hyperations and cohyperations are denoted using the superscript; however, this does not lead to a clash in notation as the only function that is both normal and initial is the identity. In \cite{FernandezJoosten:2012:Hyperations}, a general recursive scheme to compute actual cohyperations is given very much in the spirit of Theorem \ref{theorem:RecursiveHyperexponential}. 

\medskip

Let $f$ be a normal function. Then, $g$ is a {\em left adjoint} for $f$ if, for all ordinals $\alpha,\beta$,
\begin{enumerate}
\item if $\alpha= f(\beta)$, then $g(\alpha)=\beta$ and
\item if $\alpha< f(\beta)$, then $g(\alpha)<\beta$.
\end{enumerate}

Left-adjoints are natural left-inverses and cohyperating them yields left-adjoints to the corresponding hyperations in a uniform way:

\begin{theorem}\label{cancel}
Given a normal function $f$ with left adjoint $g$ and ordinals $\xi\leq\zeta$ and $\alpha$, $g^\xi f^\zeta=f^{-\xi+\zeta}$ and $g^\zeta f^\x=g^{-\xi+\zeta}$.
\end{theorem}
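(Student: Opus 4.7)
The plan is to reduce both equalities to the single cancellation identity
\[
g^\xi f^\xi = \mathrm{id}
\]
for every ordinal $\xi$. Once this is established the two displayed formulas are immediate: since $\xi\leq\zeta$ we may write $\zeta = \xi + (-\xi+\zeta)$, and then the hyperation additivity $f^{\alpha+\beta}=f^\alpha f^\beta$ gives
\[
g^\xi f^\zeta \;=\; g^\xi f^\xi f^{-\xi+\zeta} \;=\; f^{-\xi+\zeta},
\]
while the cohyperation additivity $g^{\alpha+\beta}=g^\beta g^\alpha$ gives
\[
g^\zeta f^\xi \;=\; g^{-\xi+\zeta} g^\xi f^\xi \;=\; g^{-\xi+\zeta}.
\]

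The remaining task is to prove $g^\xi f^\xi = \mathrm{id}$, which I would do by transfinite induction on $\xi$. The base case $\xi=0$ is immediate from the first defining clause. For the successor case $\xi = \eta+1$, the additivity rules and the induction hypothesis yield
\[
g^{\eta+1} f^{\eta+1} \;=\; g^1 g^\eta f^\eta f^1 \;=\; g \circ (g^\eta f^\eta) \circ f \;=\; g f \;=\; \mathrm{id},
\]
where the final equality uses that $g$ is a left adjoint of $f$, so that $gf(\beta)=\beta$ for every $\beta$.

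The main obstacle is the limit case. At a limit ordinal $\lambda$, normality of $f^\lambda$ gives $f^\lambda(\beta)=\sup_{\zeta<\lambda}f^\zeta(\beta)$, and one must verify that $g^\lambda$ inverts this supremum, i.e.\ that $g^\lambda\bigl(\sup_{\zeta<\lambda}f^\zeta(\beta)\bigr)=\beta$. The inductive identity $g^\zeta f^\zeta = \mathrm{id}$ together with cohyperation additivity $g^\lambda = g^{-\zeta+\lambda} g^\zeta$ shows that $g^\lambda f^\zeta = g^{-\zeta+\lambda}$ for each $\zeta<\lambda$; moreover the defining initiality of the $g^\eta$ entails that for every $\beta$ the value $g^\lambda f^\lambda(\beta)$ cannot exceed $\beta$ (since $g^\lambda$ is initial and $f^\lambda(\beta)<f^\lambda(\beta+1)$ forces $g^\lambda f^\lambda(\beta)\leq \beta$), while the strict inequalities $g^{-\zeta+\lambda}(\beta)<\beta$ at successor stages prevent it from being any smaller. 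Spelling out this squeeze requires the explicit limit clauses in the construction of the cohyperation given in \cite{FernandezJoosten:2012:Hyperations}, so in the final write-up I would either invoke the maximality/initiality characterisation from that paper as a black box or reproduce the relevant limit recursion here; the remainder of the argument is, as shown above, entirely formal.
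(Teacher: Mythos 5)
The paper itself does not prove Theorem~\ref{cancel}; it merely cites \cite{FernandezJoosten:2012:Hyperations}, so there is no in-paper proof to compare against. Judged on its own merits, your reduction to the single identity $g^\xi f^\xi=\mathrm{id}$, and the zero and successor cases, are all correct and genuinely capture the formal content of the statement.

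The limit case, however, rests on a false claim. You assert that ``normality of $f^\lambda$ gives $f^\lambda(\beta)=\sup_{\zeta<\lambda}f^\zeta(\beta)$''. Normality means $f^\lambda$ is continuous in its \emph{argument}, i.e.\ $f^\lambda(\sup_i\beta_i)=\sup_i f^\lambda(\beta_i)$; it says nothing about continuity in the \emph{exponent}, and the latter fails for hyperations as soon as $\beta\geq 2$. Concretely, for the hyperexponentials of the paper ($f=e$, $f^\xi=\sigma^\xi$) one computes from Theorem~\ref{theorem:RecursiveHyperexponential} that $\sigma^\omega(1)=\varepsilon_0$ and hence $\sigma^\omega(2)=\sup_{n<\omega}\sigma^n(\varepsilon_0+1)=\varepsilon_1$, whereas $\sup_{n<\omega}\sigma^n(2)=\sup\{2,\omega^2,\omega^{\omega^2},\dots\}=\varepsilon_0$. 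So $f^\lambda(\beta)$ can strictly dominate $\sup_{\zeta<\lambda}f^\zeta(\beta)$. The ``squeeze'' you sketch afterwards is also not convincing as written: initiality of $g^\lambda$ gives $g^\lambda f^\lambda(\beta)\leq f^\lambda(\beta)$, not $\leq\beta$, and you cannot get the opposite inequality without invoking the explicit double recursion (on the exponent and on the argument) that defines hyperations and cohyperations at additively indecomposable stages, exactly the clauses appearing in Theorem~\ref{theorem:RecursiveHyperexponential} and its cohyperation analogue. You do flag this yourself, but the fix is not a matter of spelling out a routine supremum argument; it requires replacing the incorrect continuity-in-the-exponent claim by the correct limit recursion from \cite{FernandezJoosten:2012:Hyperations} and carrying out the induction on both $\lambda$ and $\beta$.
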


We shall need left-adjoints to our hyperexponentials. In order to formulate them, let us first recall some basic properties of the ordinals.

\begin{lemma}
Given any ordinal $\eta>0$, there exist ordinals $\alpha,\beta$, where $\beta$ is uniquely determined, such that $\eta = \alpha + \omega^{\beta}.$ We will denote this unique $\beta$ by $\le \eta$ and define $\le 0=0$.

\end{lemma}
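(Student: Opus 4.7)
The plan is to reduce the statement to a standard application of Cantor Normal Form (CNF). Recall that every nonzero ordinal $\eta$ has a unique CNF representation $\eta = \omega^{\gamma_1} + \omega^{\gamma_2} + \cdots + \omega^{\gamma_n}$ with $\gamma_1 \geq \gamma_2 \geq \cdots \geq \gamma_n$. This will give both existence and uniqueness in a single stroke.

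For existence, given the CNF of $\eta$ as above, I would simply set $\beta := \gamma_n$ and $\alpha := \omega^{\gamma_1} + \cdots + \omega^{\gamma_{n-1}}$ (with the convention $\alpha = 0$ in the case $n = 1$). By the definition of ordinal addition, $\alpha + \omega^\beta = \eta$, establishing the existence part.

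For the uniqueness of $\beta$, I would argue that the CNF of $\eta$ can be read off directly from any decomposition $\eta = \alpha + \omega^\beta$. Specifically, if $\alpha$ has CNF $\omega^{\delta_1} + \cdots + \omega^{\delta_k}$ (possibly empty), then appending $\omega^\beta$ and repeatedly applying the absorption identity $\omega^\delta + \omega^\beta = \omega^\beta$ whenever $\delta < \beta$ produces a valid CNF for $\eta$ whose \emph{last} exponent is precisely $\beta$. Since the CNF of $\eta$ is unique, the value $\beta$ is forced to equal the last exponent in the CNF of $\eta$, independent of the choice of $\alpha$.

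The main (and essentially only) obstacle is bookkeeping in the absorption step: one must be careful to check that after absorbing all the $\omega^{\delta_i}$ with $\delta_i < \beta$ from the right end of $\alpha$'s CNF, the remaining exponents are all $\geq \beta$, so that the resulting expression really is a valid CNF. This is immediate from the fact that the $\delta_i$ are nonincreasing, so once some $\delta_j \geq \beta$ appears, all earlier $\delta_i$ also satisfy $\delta_i \geq \beta$ and are not absorbed. No new machinery beyond CNF is required, so I would expect the full proof to be quite short.
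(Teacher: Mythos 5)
The paper states this lemma without proof, treating it as a standard fact of ordinal arithmetic. Your CNF-based argument is correct and is the standard one: existence by taking $\beta$ to be the last exponent in the Cantor Normal Form of $\eta$, and uniqueness because, as you observe, any decomposition $\eta = \alpha + \omega^\beta$ reduces by absorption ($\omega^\delta + \omega^\beta = \omega^\beta$ for $\delta < \beta$) to a CNF whose final exponent is $\beta$, so CNF uniqueness forces $\beta$ to coincide with the last exponent of $\eta$'s own CNF, independent of $\alpha$.
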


The following theorem is proven in \cite{FernandezJoosten:2012:Hyperations}.

\begin{theorem}\label{logexp}
The function $\le$ is a left adjoint to $\ex$, and thus $\le^\xi$ is left adjoint to $\ex^\x$ for all $\x$.
\end{theorem}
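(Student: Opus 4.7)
The plan is to split the theorem into its two assertions and dispatch them separately. The first assertion, that $\le$ itself is a left adjoint to $\ex$, is a direct calculation from the definition of $\le$ in terms of the Cantor normal form; the second assertion is a structural consequence that follows from the general theory of hyperations and cohyperations developed in \cite{FernandezJoosten:2012:Hyperations}, together with the cancellation law recorded in Theorem \ref{cancel}.

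For the first assertion, I would unpack the two clauses in the definition of ``left adjoint.'' For clause~(1), suppose $\alpha = \ex(\beta) = -1 + \omega^\beta$. If $\beta = 0$ then $\alpha = 0$ and $\le \alpha = 0 = \beta$ by definition; if $\beta > 0$ then $\alpha = \omega^\beta$, and writing this as $0 + \omega^\beta$ one reads off $\le \alpha = \beta$. For clause~(2), suppose $\alpha < \ex(\beta)$. Note first that $\beta > 0$, since otherwise $\ex(\beta) = 0$. Hence $\ex(\beta) = \omega^\beta$ and $\alpha < \omega^\beta$. If $\alpha = 0$, then $\le \alpha = 0 < \beta$. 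Otherwise $\alpha > 0$ admits a decomposition $\alpha = \alpha' + \omega^{\le \alpha}$, and the summand $\omega^{\le \alpha}$ satisfies $\omega^{\le \alpha} \leq \alpha < \omega^\beta$, which forces $\le \alpha < \beta$, as required.

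For the second assertion, I would appeal to Theorem \ref{cancel}. For clause~(1) with respect to $\ex^\x$ and $\le^\x$, if $\alpha = \ex^\x(\beta)$ then by the cancellation law $\le^\x \ex^\x = \ex^{-\x + \x} = \ex^0 = \mathrm{id}$, so $\le^\x \alpha = \beta$. For clause~(2), the main obstacle is getting a strict inequality: I would invoke the fact that the cohyperation of a left adjoint is itself a left adjoint of the corresponding hyperation, a general result from \cite{FernandezJoosten:2012:Hyperations}. Alternatively, the clause can be verified by transfinite induction on $\x$, using the recursive clauses analogous to Theorem \ref{theorem:RecursiveHyperexponential}: the successor step $\ex^{\x+1} = \ex \circ \ex^\x$, $\le^{\x+1} = \le^\x \circ \le$ reduces to applying clause~(2) for $(\ex,\le)$ and then the inductive hypothesis; the limit steps reduce similarly by continuity of $\ex^\x$ in the argument and the initial-function character of $\le^\x$. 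Combining both clauses establishes that $\le^\x$ is a left adjoint to $\ex^\x$ for every $\x$.
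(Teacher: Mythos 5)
The paper itself gives no proof of this theorem: it simply cites \cite{FernandezJoosten:2012:Hyperations}, both for the direct claim that $\le$ is left adjoint to $\ex$ and for the general principle that cohyperating a left adjoint yields a left adjoint to the corresponding hyperation. Your proposal therefore goes beyond the paper, and the part you spell out in full — that $\le$ is a left adjoint to $\ex$ — is correct: clause~(1) is a clean case split on $\beta=0$ versus $\beta>0$, and in clause~(2) the key estimate $\omega^{\le\alpha}\leq \alpha<\omega^\beta$ correctly forces $\le\alpha<\beta$. For the second assertion your primary route (invoke the general theorem from the reference) matches the paper exactly.

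One caveat concerns the alternative inductive sketch. You wrote the successor-step decompositions as $\ex^{\x+1}=\ex\circ\ex^\x$ and $\le^{\x+1}=\le^\x\circ\le$, but the axioms for (co)hyperations give $g^{\x+\zeta}=g^\x g^\zeta$ for hyperations and $g^{\x+\zeta}=g^\zeta g^\x$ for cohyperations, hence $\ex^{\x+1}=\ex^\x\circ\ex$ and $\le^{\x+1}=\le\circ\le^\x$. What you wrote is the decomposition of $\ex^{1+\x}$ and $\le^{1+\x}$, which agrees with $\ex^{\x+1}$, $\le^{\x+1}$ only for finite $\x$ and degenerates into a fixed-point identity for $\x\geq\omega$, so it cannot drive a transfinite induction. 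With the correct decomposition the argument still goes through, but in the opposite order: from $\alpha<\ex^\x(\ex(\beta))$ first apply the inductive hypothesis to get $\le^\x(\alpha)<\ex(\beta)$, and then apply the base case to get $\le(\le^\x(\alpha))<\beta$, which is $\le^{\x+1}(\alpha)<\beta$. So the inductive alternative is salvageable but needs the corrected recursion equations and the corresponding reversal of the two steps.
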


We will refer to the functions $\le^\xi$ as {\em hyperlogarithms}.

\subsection{Exact sequences}

A nice feature of cohyperations is that, in a sense, they need only be defined locally. To make this precise, we introduce the notion of an {\em exact sequence}.

\begin{defi}
Let $g^{\x}$ be a cohyperation, and $f:\Lambda\to\Theta$ be an ordinal function.

Then, we say $f$ is {\em $g$-exact} if, given ordinals ${\x},\zeta$ with $\x + \zeta < \Lambda$, $f({\x}+\zeta)=g^\zeta f({\x})$.
\end{defi}

A $g$-exact function $f$ describes the values of $g^{\x} f(0)$. However, for $f$ to be $g$-exact, we need only check a fairly weak condition:

\begin{lemma}\label{tfae}
The following are equivalent:
\begin{enumerate}
\item $f$ is $g$-exact
\item for every ordinal ${\x},$ $f({\x})=g^{\x} f(0)$
\item for every ordinal $\zeta>0$ there is ${\x}<\zeta$ such that $f(\zeta)=g^{-{\x}+\zeta}f({\x})$.
\end{enumerate}
\end{lemma}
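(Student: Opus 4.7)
The plan is to establish the cycle $(1)\Rightarrow(2)\Rightarrow(3)\Rightarrow(2)\Rightarrow(1)$, or more precisely $(1)\Leftrightarrow(2)$ together with $(2)\Leftrightarrow(3)$. The key algebraic fact driving all the implications is the defining identity of a cohyperation, $g^{\xi+\zeta}=g^{\zeta}g^{\xi}$, together with the cancellation $\xi+(-\xi+\zeta)=\zeta$ available whenever $\xi\leq\zeta$.

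For $(1)\Rightarrow(2)$, I would simply instantiate the $g$-exactness condition with first coordinate $0$, giving $f(\zeta)=f(0+\zeta)=g^{\zeta}f(0)$ for every $\zeta$. Conversely, $(2)\Rightarrow(1)$ is a one-line computation:
\[
f(\xi+\zeta)=g^{\xi+\zeta}f(0)=g^{\zeta}g^{\xi}f(0)=g^{\zeta}f(\xi),
\]
using the cohyperation identity in the middle. The implication $(2)\Rightarrow(3)$ is trivial: given $\zeta>0$, take $\xi=0$, so that $-\xi+\zeta=\zeta$ and $f(\zeta)=g^{\zeta}f(0)=g^{-0+\zeta}f(0)$.

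The only nontrivial direction is $(3)\Rightarrow(2)$, which I would handle by transfinite induction on $\zeta$. The base case $\zeta=0$ is immediate since $g^{0}$ is the identity. For $\zeta>0$, condition $(3)$ supplies some $\xi<\zeta$ with $f(\zeta)=g^{-\xi+\zeta}f(\xi)$. The induction hypothesis applied to the smaller ordinal $\xi$ gives $f(\xi)=g^{\xi}f(0)$, and then the cohyperation identity $g^{-\xi+\zeta}g^{\xi}=g^{\xi+(-\xi+\zeta)}=g^{\zeta}$ yields
\[
f(\zeta)=g^{-\xi+\zeta}g^{\xi}f(0)=g^{\zeta}f(0),
\]
closing the induction.

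The only mild obstacle is making sure the additivity rule $g^{\alpha+\beta}=g^{\beta}g^{\alpha}$ is being used with the arguments in the correct order (note the reversed composition convention in the definition of cohyperation, as opposed to hyperations), but once this is observed the whole argument reduces to a short inductive computation and requires no further machinery beyond the definition of a cohyperation itself.
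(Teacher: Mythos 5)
Your proof is correct. In fact the paper itself does not include a proof of Lemma~\ref{tfae}; it is stated as a fact and implicitly deferred to the companion reference \cite{FernandezJoosten:2012:Hyperations}, so there is no in-text argument to compare against. Your argument is the natural one: $(1)\Leftrightarrow(2)$ and $(2)\Rightarrow(3)$ are immediate from the cohyperation identity $g^{\xi+\zeta}=g^\zeta g^\xi$, and $(3)\Rightarrow(2)$ is a transfinite induction on $\zeta$, using the fact that $\xi<\zeta$ in condition (3) makes the induction hypothesis applicable at $\xi$ and that $g^{-\xi+\zeta}g^{\xi}=g^{\xi+(-\xi+\zeta)}=g^{\zeta}$ closes the step. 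You are also right to flag the reversed composition order in the cohyperation axiom; getting that backwards is the one place an otherwise mechanical argument could silently go wrong. The only cosmetic remark is that all quantifiers should implicitly be restricted to the domain $\Lambda$ of $f$, which your induction respects automatically since it proceeds on $\zeta<\Lambda$.
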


\begin{example}\label{example:anExactSequence}
By $e^{\alpha}\beta \cdot \gamma$ we denote $(e^{\alpha}\beta) \cdot \gamma$. Then, the following sequence whose initial sub-sequence of non-zero elements is of length $\omega \cdot 2 + 2$ is $\le$-exact: 
\[
\la e^{\omega^2}1 + e^\omega (e^{\omega+1}1 \cdot 2), e^\omega (e^{\omega+1}1 \cdot 2),  \ldots, e^{\omega+1}1 \cdot 2,e^{\omega+1}1, \ldots , \omega, 1, 0 \ldots \ra
\]
\end{example}

\begin{proof}
By Lemma \ref{tfae} and Theorem \ref{logexp}. 
\end{proof}

In the light of Theorem \ref{theorem:hyperationsVersusVeblenProgressions} we can reformulate this example in terms of the better-known Veblen functions. Thus, using the usual convention that $\varphi_1(\alpha)$ is denoted by $\varepsilon_\alpha$ we can rewrite our example as:
\[
\la \varphi_2(1) + \varepsilon_{\varepsilon_\omega + \varepsilon_\omega}, \varepsilon_{\varepsilon_\omega + \varepsilon_\omega},  \ldots, \varepsilon_\omega + \varepsilon_\omega,  \varepsilon_\omega, \ldots , \omega, 1, 0 \ldots \ra .
\]


\section{Consistency Sequences}\label{section:consistencySequences}


Given a worm $A$, we define its \emph{consistency sequence} to be the sequence $\la o_\xi(A) \ra_{\xi \in {\sf On}}$.
In this section we give a full characterization of consistency sequences. That is, we will determine which sequences $\<\alpha_\xi\>_{\xi\in\sf On}$ are of the form $\vec{\formerOmega}(A)$ for some worm $A$. Moreover, given ${\formerOmega}_0(A)$, we will compute ${\formerOmega}_\xi(A)$ for all $\xi>0$, even when $A$ itself is not explicitly given.

It is intuitively clear that for constant $A$, the function $o_\xi (A)$ is weakly decreasing in $\xi$ as is expressed in the following lemma.
 
\begin{lemma}\label{theorem:WeaklyDecreasingSequences}
For ${\x}<\zeta$ we have that ${\formerOmega}_{\x}({A})\geq {\formerOmega}_\zeta({A})$.
\end{lemma}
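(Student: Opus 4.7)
The plan is to exploit the fact that $<_\zeta$ is a sub-relation of $<_\xi$ whenever $\xi \leq \zeta$, and then perform a straightforward induction on the well-founded order $<_\zeta$.

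First, I would establish the inclusion of relations: if $\xi \leq \zeta$ and $A <_\zeta B$, then $B \vdash \langle \zeta \rangle A$; from the \glp-axiom $[\xi]\psi \to [\zeta]\psi$ (for $\xi\leq\zeta$) we get by duality $\langle \zeta\rangle A \vdash \langle \xi\rangle A$, and so $B \vdash \langle \xi\rangle A$, i.e.\ $A <_\xi B$. This is a one-line observation but is the engine of the whole argument.

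Next, I would proceed by induction on $<_\zeta$, which is well-founded by Corollary \ref{theorem:UnrestrictedOrderIsWellFounded}. Unfolding the definition of $o_\zeta$,
\[
o_\zeta(A) \;=\; \sup_{B <_\zeta A}\bigl(o_\zeta(B) + 1\bigr).
\]
Fix an arbitrary $B$ with $B <_\zeta A$. By the inclusion of relations just established, $B <_\xi A$, so directly from the definition of $o_\xi$,
\[
o_\xi(A) \;\geq\; o_\xi(B) + 1.
\]
The induction hypothesis applied to $B$ yields $o_\xi(B) \geq o_\zeta(B)$, hence $o_\xi(A) \geq o_\zeta(B) + 1$. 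Taking the supremum over all such $B$ produces $o_\xi(A) \geq o_\zeta(A)$.

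I do not anticipate any real obstacle: the content rests entirely on the soft monotonicity axiom of \glp\ coupled with the bar-recursion definition of $o_\xi$ from Definition~\ref{definition:OmegaOrder}. No detour through the reduction to $\oOld_\xi$, hyperexponentials, or heads and remainders is needed, which is why the lemma is appropriately stated as a preliminary fact at the start of this section.
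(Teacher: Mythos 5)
Your proof is correct and is essentially the same argument as the paper's: both rest on the observation that $B <_\zeta A$ implies $B <_\xi A$ (via the \glp\ monotonicity axiom) together with a well-founded induction and the bar-recursion defining $o_\xi$. The only cosmetic difference is that you induct directly on the well-founded relation $<_\zeta$ while the paper inducts on the ordinal value $o_\xi(A)$; both are valid and yield the same computation.
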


\proof
By induction on ${\formerOmega}_\x (A)$ we see that
\[
{\formerOmega}_\x (A) := \sup_{B <_\x A} \big({\formerOmega}_\x (B) +1 \big) \geq_{\sf IH}  \sup_{B <_\x A} \big({\formerOmega}_\zeta (B) +1 \big)\\
 \geq \sup_{B <_\zeta A}\big({\formerOmega}_\zeta (B) +1 \big)=  {\formerOmega}_\zeta (A).
\]
Note that we have the last inequality since for ${\x}<\zeta$ we have  $ B<_\zeta {A} $ implies $ B<_\xi {A}$. 
\qed

We will use the notation $\vec{{\formerOmega}}({A})$ for the sequence $\la{\formerOmega}_\xi(A)\ra_{\xi\in\ord}$; that is,
\[
\vec{{\formerOmega}}({A})\ \ :=\ \ \la {\formerOmega}_0({A}), \ {\formerOmega}_1({A}),\ \ldots , {\formerOmega}_{\omega}({A}),\ {\formerOmega}_{\omega+1}({A}),\ldots \ra \ .
\]
By Lemma \ref{theorem:WeaklyDecreasingSequences}, $\vec{{\formerOmega}}({A})$ is a weakly decreasing sequence of ordinals. In particular, we have that $\{ {\formerOmega}_\xi(A) \mid \xi \in \ord \}$ is a finite set for any worm $A$. Moreover, we see that any consistency sequence eventually hits zero.

\begin{corollary}\label{theorem:MaximalCoordinateOfSequences}
Given a worm $A\neq \top$, we may write $A=\xi B$ for a unique ordinal $\xi$ and worm $B$. Then, given an arbitrary ordinal $\zeta$, we have that $o_\zeta(A)=0$ if and only if $\zeta>\xi$.
\end{corollary}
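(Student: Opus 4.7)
The plan is to reduce the computation of $o_\zeta(A)$ to that of $\tilde o_\zeta h_\zeta(A)$ via Lemma \ref{theorem:OmegaReducesToO}, and then to analyze the head $h_\zeta(A)$ according to how $\zeta$ compares to $\xi$. The uniqueness of the decomposition $A=\xi B$ is essentially a syntactic triviality, immediate from the inductive definition of $\Worms$: any worm other than $\top$ is of the form $\langle \eta\rangle B$ for a unique outermost modality $\eta$ and worm $B$, and we read $\xi$ off as this outermost modality.

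For the characterization itself, I would split into two cases. If $\zeta>\xi$, then since the leading modality of $A$ is $\xi<\zeta$, Definition \ref{DefHead} gives directly $h_\zeta(A)=\top$. Then Lemma \ref{theorem:OmegaReducesToO} together with Lemma \ref{LemmOrdBasic}.\ref{item:zero:LemmOrdBasic} and Corollary \ref{theorem:oOrderRestrictedToWormsXiIsTildeO} yield $o_\zeta(A)=\tilde o_\zeta h_\zeta(A)=\tilde o_\zeta(\top)=0$.

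Conversely, if $\zeta\leq \xi$, then the leading modality of $A$ is at least $\zeta$, so $h_\zeta(A)=\xi h_\zeta(B)\neq \top$, and moreover $h_\zeta(A)\in\Worms_\zeta$. The \glp axiom $[\zeta]\psi\to[\xi]\psi$ for $\zeta\leq\xi$ yields in dual form $\langle\xi\rangle\top\vdash\langle\zeta\rangle\top$, so $h_\zeta(A)\vdash\langle\zeta\rangle\top$, i.e.\ $\top<_\zeta h_\zeta(A)$. Since $\top\in \Worms_\zeta$ and $\tilde o_\zeta(\top)=0$, this forces $\tilde o_\zeta h_\zeta(A)\geq 1$, whence $o_\zeta(A)\geq 1>0$.

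There is no real obstacle here; the only point to be mindful of is cleanly separating the three notions $o_\zeta$, $\tilde o_\zeta$, and $\oAlg_\zeta$, which is handled in one stroke by quoting Lemma \ref{theorem:OmegaReducesToO} and Corollary \ref{theorem:oOrderRestrictedToWormsXiIsTildeO}. This corollary essentially repackages the content of Lemmas \ref{LemmOrdBasic} and \ref{theorem:OmegaReducesToO} in a form tailored to the uses of consistency sequences developed in the remainder of the section.
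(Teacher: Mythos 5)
Your proof is correct and takes essentially the same route as the paper's: both reduce $o_\zeta(A)$ to $h_\zeta(A)$ via Lemma \ref{theorem:OmegaReducesToO} and then observe that $h_\zeta(A)=\top$ precisely when $\zeta>\xi$. The only cosmetic difference is that the paper establishes $o_\xi(A)\neq 0$ and leaves the case $\zeta<\xi$ to the preceding monotonicity Lemma \ref{theorem:WeaklyDecreasingSequences}, whereas you argue all $\zeta\leq\xi$ directly and explicitly verify $\top<_\zeta h_\zeta(A)$; both are fine.
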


\proof
If $A=\xi B$ then clearly $h_\xi(A)\not=\top$, so that by Lemma \ref{theorem:OmegaReducesToO},
${\formerOmega}_{\xi}({A})=o_\xi h_\xi(A)\neq 0$.
On the other hand, for $\zeta> \xi$, $h_{\zeta}({A}) = \top$ whence ${\formerOmega}_{\zeta}({A}) =0$.
\qed

\PreprintVersusPaper{
\subsection{Motivation for consistency sequences}

It is clear that within the framework of this paper it is a very natural question to study what the consistency sequences look like. Moreover, it turns out that a characterization of consistency sequences sheds light on various other problems. Let us first discuss a relation between consistency sequences and Kripke semantics for the closed fragment of \glp.

Each worm $A$ is known to be consistent with \glp, hence should be satisfied in an exact 
model for its closed fragment, if it exists; that is, a model on which only the theorems of $\glp^0$ are valid. Kripke models give us a transparent and convenient interpretation
for many modal logics. A {\em Kripke frame} is a structure
$\mathfrak F=\<W,\<R_\lambda\>_{\lambda<\Lambda}\>$, where $W$ is a
set and $\<R_\lambda\>_{\lambda<\Lambda}$ a family of binary
relations on $W$. Let $\mathcal L^0_\Lambda$ be the closed language of $\glp_\Lambda$. The {\em closed valuation} on $\mathfrak F$ is the unique function
$\lb\cdot\rb:\mathcal L^0_\Lambda\to \mathcal P(W)$ such that
\[
\begin{array}{lcllcl}
\lb\top\rb&=&W\ &\qquad
\lb\neg\phi\rb&=&W\setminus\lb\phi\rb\\
\lb\phi\wedge\psi\rb&=&{}\lb\phi\rb\cap\lb\psi\rb\ &\qquad
\lb\<\lambda\>\phi\rb&=&R^{-1}_\lambda\lb\phi\rb.
\end{array}
\]

One often writes $\<\mathfrak
F,\lb \cdot \rb\>,w\Vdash\psi$ instead of $w\in\lb \psi\rb$ or even just $w\Vdash\psi$ if the context allows it. As usual, $\phi$ is
{\em satisfied} on $\<\mathfrak
F,\lb \cdot \rb\>$ if $\lb\phi\rb\not=\varnothing$,
and {\em valid} on $\<\mathfrak
F,\lb \cdot \rb\>$ if $\lb\phi\rb=W$. The latter case shall be denoted by $\<\mathfrak F,\lb \cdot \rb\> \models \psi$.

As $[\xi]$ satisfies L\"ob's axiom, we know that each $R^{-1}_\xi$ is transitive and well-founded. 
Consequently, we can assign to each world $w\in W$ a sequence of ordinals 
\[
\vec{w} \ := \ ({w}_0, {w}_1, \ldots {w}_{\omega}, {w}_{\omega+1}\ldots),
\] 
where ${w}_\zeta$ corresponds to the $R^{-1}_\zeta$-order-type of $w$. If $\langle\mathfrak F,w\rangle\Vdash A$ for some worm $A$, then necessarily $w_\xi \geq {\formerOmega}_\xi(A)$ for each $\xi$. A systematic study of $\vec{{\formerOmega}}(A)$ will thus also reveal information about models for $\glp^0$.

Ignatiev presented an exact model for $\glp^0_\omega$ in \cite{Ignatiev:1993:StrongProvabilityPredicates} which has been studied in various other papers too \cite{Joosten:2004:InterpretabilityFormalized, BeklemishevJoostenVervoort:2005:FinitaryTreatmentGLP,Icard:2008:MastersThesis}. The worlds in this model are finite sequences of ordinals $\la\xi_i\ra_{i\in \omega}$ where $\xi_{n+1}\leq \le \xi_n$. In Theorem \ref{theorem:SuccessorRelations} below, we will explain \emph{why} this condition on the successor coordinates in these sequences needs to be imposed.

In \cite{FernandezJoosten:2012:ModelsOfGLP} the authors define an exact model for $\glp^0_\Lambda$ for any ordinal $\Lambda$. The worlds in those models closely reflect the consistency sequences as defined here.
Accordingly, the models have the property that if $\langle \mathfrak{F},w\rangle\Vdash A$, then  $w_\xi \geq {\formerOmega}_\xi(A)$ for each $\xi$. Thus, giving an exhaustive characterization of consistency sequences is fundamental in understanding semantics of $\glp^0$.\\
\medskip

Moreover, for consistency sequences of length $\omega$ there turns out to be a natural proof-theoretic interpretation. Refraining from technical details, let us denote for some theory $T$ by $T^n_\alpha$ the $\alpha$th Turing progression based on the $n$-consistency notion, that is $T^n_0 := T$, and $T^n_{\beta}:= \bigcup_{\alpha <\beta}\big(T^n_\alpha + n\mbox{-}{\sf Cons}(T^n_\alpha) \big)$, where $n\mbox{-}{\sf Cons}$ denotes a formalization of `is consistent together with all true $\Pi_n^0$ sentences'. 

Let \ea denote elementary arithmetic. We can define for a theory $U$ its $\Pi^0_{n+1}$ proof theoretical ordinal $|U|_{\Pi^0_{n+1}}: = \sup \{  \alpha \mid U \vdash \ea^n_\alpha \}$. Since worms $A$ are iterated consistency sequences, they naturally can be given arithmetic semantics $A^*$. For worms $A$ in $\glp_\omega$ it has been shown (\cite{FernandezJoosten:2012:WellOrders2}) that the sequence of proof theoretic ordinals of the theory $\ea + A^*$ is exactly given by $\vec o (A)$.
}{

Consistency sequences are of interest of their own. Moreover, they shed light on (Kripke) semantics for the closed fragment of \glp. Also, they admit a proof-theoretic interpretation as explained in \cite{FernandezJoosten:2012:WellOrders2}.}


\subsection{A local characterization}


We shall first provide a local characterization of the consistency sequences in that we relate the values in the sequence to its neighbors. To this end, let us first compute ${\formerOmega}_{\x+1}(A)$ in terms of ${\formerOmega}_\x(A)$. Recall that $\le \alpha$ denotes the unique $\beta$ such that $\alpha = \alpha' + \omega^{\beta}$ for $\alpha >0$, while $\le 0 = 0$. The following lemma will be useful:

\begin{lemma}\label{lemma:successorLemma}
Given an ordinal ${\x}$ and a worm ${A}$, $o_{{\x}+1}h_{{\x}+1}({A}) = \le o_{\x} h_{\x}({A})$.
\end{lemma}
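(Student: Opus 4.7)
The plan is to reduce everything to the base case $\xi=0$, where the claim becomes $o_1 h_1(A) = \ell\, o(A)$, and then recognize this as a direct consequence of Corollary \ref{CorAltCalc} together with the defining property of $\ell$ as a left-adjoint. The bridge between the general $\xi$ case and the $\xi=0$ case is provided by Lemma \ref{theorem:LoweringSalphaOrders}, which lets us pass between $o_\xi$ and $o$ via the shift operators $\xi\downarrow$.

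First I would set $B := \xi\downarrow h_\xi(A)$, so that by Lemma \ref{theorem:LoweringSalphaOrders} we have $o_\xi h_\xi(A) = o(B)$. The crucial observation is then that $h(B) = \xi\downarrow h_{\xi+1}(A)$: indeed, writing $h_\xi(A) = \alpha_1\cdots\alpha_n$ with all $\alpha_i \geq \xi$, the $1$-head of $B = (-\xi+\alpha_1)\cdots(-\xi+\alpha_n)$ keeps exactly the initial segment where $-\xi+\alpha_i \geq 1$, i.e.\ where $\alpha_i \geq \xi+1$. Using Lemma \ref{theorem:uparrowProperties}.\ref{coadd} to compute $1\downarrow h(B) = 1\downarrow(\xi\downarrow h_{\xi+1}(A)) = (\xi+1)\downarrow h_{\xi+1}(A)$ and again Lemma \ref{theorem:LoweringSalphaOrders}, we obtain
\[
o(1\downarrow h(B)) \;=\; o\big((\xi+1)\downarrow h_{\xi+1}(A)\big) \;=\; o_{\xi+1}h_{\xi+1}(A).
\]
So it only remains to show that $o(1\downarrow h(B)) = \ell\, o(B)$.

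Assuming $B\neq \top$, this is immediate from Corollary \ref{CorAltCalc}, which gives
\[
o(B) \;=\; o(b(B)) + \omega^{o(1\downarrow h(B))},
\]
together with the uniqueness of the exponent $\beta$ in the representation $\eta = \alpha + \omega^\beta$ (the lemma preceding Theorem \ref{logexp}), which is precisely what defines $\ell\,\eta$. Hence $\ell\, o(B) = o(1\downarrow h(B))$, finishing this case.

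The only remaining point is the degenerate case $B=\top$, i.e.\ $h_\xi(A)=\top$. Here no modality of $A$ reaches $\xi$, hence none reaches $\xi+1$ either, so $h_{\xi+1}(A)=\top$ as well; both sides of the claimed equality are then $0$ (since $\ell 0 = 0$ by convention). The only mildly delicate part of the argument is verifying the identification $h(B)=\xi\downarrow h_{\xi+1}(A)$, but this is a direct syntactic unpacking of the definitions and not a real obstacle.
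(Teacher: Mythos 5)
Your argument is essentially identical to the paper's own proof: you introduce the same auxiliary worm $B=\xi\downarrow h_\xi(A)$, make the same syntactic observation $h(B)=\xi\downarrow h_{\xi+1}(A)$, invoke Corollary~\ref{CorAltCalc} together with Lemma~\ref{theorem:LoweringSalphaOrders} in the same roles, and read off $\ell\,o(B)=o(1\downarrow h(B))$ from the uniqueness of the $\omega^\beta$ summand. The one point where you go slightly beyond the paper is your explicit treatment of the degenerate case $B=\top$, which the paper passes over silently even though Corollary~\ref{CorAltCalc} (as literally stated, without the $-1+$) does not quite apply to $\top$; making that case explicit is a small but genuine improvement in rigor.
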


\proof
Let $B=\xi\downarrow h_\x (A)$, so that from Corollary \ref{CorAltCalc} we have
\[o_\x h_\x (A)=o (\x \downarrow h_\x(A))=o(B)= o\body B+\omega^{o(1\downarrow h(B))},\]
and thus $\le o_\x h_\x(A)=o(1\downarrow h(B))$. But $h(B)=\xi\downarrow h_{\xi+1}(A)$, so
\[o(1\downarrow h(B))=o((\xi+1)\downarrow h_{\xi+1}(A))=o_{\x+1}h_{\x+1}(A),\]
where the last equality is an instance of Corollary \ref{theorem:orderAndArrows}.
\qed

Now we are ready to describe the relation between successive coordinates of the $\vec{{\formerOmega}}({A})$ sequence.

\begin{theorem}\label{theorem:SuccessorRelations}
Given an ordinal $\x$ and a worm $A$, ${\formerOmega}_{{\x}+1}({A}) = \le{\formerOmega}_{\x}({A})$
\end{theorem}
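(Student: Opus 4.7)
The plan is to observe that the statement follows almost immediately by chaining together two results already established: Lemma \ref{theorem:OmegaReducesToO}, which reduces $o_\zeta(A)$ to $o_\zeta h_\zeta(A)$ for any ordinal $\zeta$, and Lemma \ref{lemma:successorLemma}, which is precisely the corresponding identity at the level of heads, $o_{\xi+1} h_{\xi+1}(A) = \ell o_\xi h_\xi(A)$. So there is essentially no obstacle to overcome; the theorem is a packaging of the previous lemma in terms of $o_\xi$ rather than $o_\xi h_\xi$.

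Concretely, I would write the proof as the following chain of equalities. First, apply Lemma \ref{theorem:OmegaReducesToO} with ordinal $\xi+1$ to rewrite the left-hand side:
\[
o_{\xi+1}(A) \ = \ o_{\xi+1} h_{\xi+1}(A).
\]
Next, apply Lemma \ref{lemma:successorLemma} to obtain
\[
o_{\xi+1} h_{\xi+1}(A) \ = \ \ell\, o_\xi h_\xi(A).
\]
Finally, apply Lemma \ref{theorem:OmegaReducesToO} once more, this time with ordinal $\xi$, to rewrite the inner expression as $o_\xi(A)$, giving
\[
\ell\, o_\xi h_\xi(A) \ = \ \ell\, o_\xi(A).
\]
Composing these three equalities yields $o_{\xi+1}(A) = \ell\, o_\xi(A)$, which is exactly what the theorem asserts.

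Since every ingredient is already in place, I expect the proof to be a one-sentence remark rather than a substantive argument; the only thing worth commenting on is that this result, together with Theorem \ref{theorem:RecursiveHyperexponential} and the reduction of $o_\xi$ to $o$, will supply the local description of $\vec{o}(A)$ needed to characterize consistency sequences in the remainder of Section \ref{section:consistencySequences}.
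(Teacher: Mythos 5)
Your proof is correct and follows exactly the same route as the paper's: chain Lemma \ref{theorem:OmegaReducesToO} (twice, once for $\xi+1$ and once for $\xi$) with Lemma \ref{lemma:successorLemma} in between. The paper presents it as the same three-equality chain.
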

\proof We have that ${\formerOmega}_{{\x}+1}({A}) = o_{{\x}+1}h_{{\x}+1}({A}) = \le o_{\x} h_{\x}({A})=\le{\formerOmega}_{\x}({A}),$ where the second equality uses Lemma \ref{lemma:successorLemma} and the others Lemma \ref{theorem:OmegaReducesToO}.
\qed

Theorem \ref{theorem:SuccessorRelations} tells us what the relation between successor coordinates of $\vec{{\formerOmega}}({A})$ is. We may also infer from it when successor coordinates are different; if ${\formerOmega}_{\x}({A})$ is a fixed point of $\zeta \mapsto -1+\omega^\zeta$ then ${\formerOmega}_{\x}({A})= {\formerOmega}_{{\x}+1}({A})$. 

Next we shall determine what happens at limit steps in the consistency sequences. Since we know that for a given worm $A$, the set $\{ o_\xi (A)\mid \xi \in {\sf On}\}$ is finite it is clear that for limit $\xi$, the value $o_\xi(A)$ can only be non-zero, if at some point before $\xi$, the sequence $\vec o (A)$ had stabilized. We shall now compute the relation between this stabilized value and the limit value.
Here, our functions $\ex^\x$ come back into play:

\begin{theorem}\label{theorem3.6}
Let $\y{\in}{\sf Lim}$ then, for $\theta$ large enough we have that
\[
{\formerOmega}_{\theta}({A})  =  \ex^{-\theta+\y} {\formerOmega}_\zeta({A}) =  e^{\omega^{\le \zeta}} {\formerOmega}_\zeta(A)=  e_{{\le \zeta}} ({\formerOmega}_\zeta(A)).
\]
\end{theorem}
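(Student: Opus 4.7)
The plan is to combine the reduction $o_\xi(A) = o(\xi\downarrow h_\xi(A))$ from Corollary~\ref{corollary:reductionXiOrdertypeToPlainOrdertype} with the observation that, for $\theta$ sufficiently close to $\zeta$, the $\theta$-head of $A$ already coincides with its $\zeta$-head. Once the heads agree, all three equalities in the theorem follow by putting together Corollary~\ref{theorem:orderAndArrows}, Lemma~\ref{theorem:uparrowProperties}, and Theorem~\ref{theorem:hyperationsVersusVeblenProgressions}.

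First I would make ``$\theta$ large enough'' precise. Since $A$ is a finite worm, only finitely many ordinals occur as modalities of $A$; let $M$ be the maximum such ordinal that is strictly less than $\zeta$, taking $M := 0$ if none exists. Because $M < \zeta$ and $\zeta$ is a limit, $M+1 < \zeta$. Write $\zeta = \zeta' + \omega^{\le\zeta}$, where $\zeta'$ is possibly $0$ and $\le\zeta > 0$ since $\zeta\in\sf Lim$. Set $\theta_0 := \max(\zeta',\, M+1) < \zeta$. For $\theta \in [\theta_0, \zeta)$ I claim two things: (a) $h_\theta(A) = h_\zeta(A)$, since by choice of $M$ no modality of $A$ lies in $[\theta, \zeta)$; and (b) $-\theta + \zeta = \omega^{\le\zeta}$. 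Claim (b) follows by writing $\theta = \zeta' + \eta$ with $\eta < \omega^{\le\zeta}$ and noting that $\omega^{\le\zeta}$ is additively indecomposable, so $\eta + \omega^{\le\zeta} = \omega^{\le\zeta}$ and $\omega^{\le\zeta}$ is the unique $\mu$ with $\theta + \mu = \zeta$.

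Let $B := h_\zeta(A) \in \Worms_\zeta$. Applying Lemma~\ref{theorem:uparrowProperties} item \ref{inverse} with $\beta = \theta$ and $\alpha = -\theta+\zeta$ gives $\theta\downarrow B = (-\theta+\zeta)\uparrow(\zeta\downarrow B)$. Combining this with Corollary~\ref{corollary:reductionXiOrdertypeToPlainOrdertype} on the outside and with Corollary~\ref{theorem:orderAndArrows} (together with the identification $\sigma^\alpha = e^\alpha$ from Theorem~\ref{theorem:wormenumeratorsAreMinimalHyperation}) yields
\[
o_\theta(A) = o(\theta\downarrow h_\theta(A)) = o(\theta\downarrow B) = o\bigl((-\theta+\zeta)\uparrow(\zeta\downarrow B)\bigr) = e^{-\theta+\zeta}\, o(\zeta\downarrow B) = e^{-\theta+\zeta}\, o_\zeta(A),
\]
which is the first equality. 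The second equality is then immediate by substituting (b), and the third is exactly Theorem~\ref{theorem:hyperationsVersusVeblenProgressions} specialised to $f = e$ at index $\le\zeta$.

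The main ``obstacle'' is not technical but rather a matter of bookkeeping: pinning down the quantitative meaning of ``$\theta$ large enough'' via the two arithmetic conditions (a) and (b). Once those are in place, the computation is a straightforward chain of previously established equalities concerning heads, the $\uparrow/\downarrow$ maps, and the hyperexponentials.
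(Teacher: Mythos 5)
Your proof is correct and follows essentially the same route as the paper: reduce to heads via Corollary~\ref{corollary:reductionXiOrdertypeToPlainOrdertype}, observe that $h_\theta(A)=h_\zeta(A)$ for $\theta$ close enough to $\zeta$, rewrite $\theta\downarrow B$ as $(-\theta+\zeta)\uparrow(\zeta\downarrow B)$ via Lemma~\ref{theorem:uparrowProperties}.\ref{inverse}, and conclude with Corollary~\ref{theorem:orderAndArrows} and Theorem~\ref{theorem:hyperationsVersusVeblenProgressions}. The only (harmless) difference is that you make ``large enough'' fully explicit by taking $\theta_0=\max(\zeta',M+1)$, whereas the paper leaves that choice implicit.
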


\proof
That $e^{\omega^{\le \zeta}} {\formerOmega}_\zeta(A)= e_{{\le \zeta}} ({\formerOmega}_\zeta(A))$ is just Theorem \ref{theorem:hyperationsVersusVeblenProgressions} so we focus on the first equalities. Since $\zeta\in\sf Lim$, for $\xi$ large enough we have that $h_\zeta(A)=h_\xi(A)$, and more generally,
\begin{equation}\label{dagger}
h_\zeta(A)=h_\theta(A)
\end{equation}
whenever $\theta\in[\xi,\zeta]$. Moreover, writing $\zeta=\zeta'+\omega^{\le \zeta}$, we may without loss of generality choose $\xi\geq \zeta'$ so that $-\theta+\zeta=\omega^{\le\zeta}$ for all $\theta\in[\xi,\zeta).$ For the sake of abbreviating, let $\delta=-\theta+\y = \omega^{\le \zeta}$. By definition
\begin{equation}\label{equation:DefinitionOfDelta}
\theta + \delta = \y.
\end{equation}

Now we can prove our theorem starting with an application of Lemma \ref{theorem:OmegaReducesToO}: 
\[
\begin{array}{llll}
{\formerOmega}_{\theta}({A}) & = & o_{\theta}h_{\theta}({A}) &  \\
 & = & o_{\theta}h_\y({A})&  \mbox{By (\ref{dagger})}\\
 & = &o( \theta\downarrow h_\zeta({A}))&  \mbox{Corollary \ref{corollary:reductionXiOrdertypeToPlainOrdertype}}\\
 & = & o(\delta\uparrow ((\theta + \delta)\downarrow h_\zeta (A))) & \mbox{Lemma \ref{theorem:uparrowProperties}.\ref{inverse}}\\
 & = & o(\delta\uparrow (\zeta\downarrow h_\zeta (A)))&  \mbox{By \eqref{equation:DefinitionOfDelta}}\\
 & = & \ex^{\delta} o(\zeta{\downarrow}h_\zeta({A})) &  \mbox{Theorem \ref{theorem:OrderTypeCalculus}.\ref{exo}}\\
 & = & \ex^{\delta} {\formerOmega}_\zeta({A}) &\mbox{Corollary \ref{corollary:reductionXiOrdertypeToPlainOrdertype}}\\
\end{array}
\]
\qed

Theorem \ref{theorem:SuccessorRelations} and Theorem \ref{theorem3.6} have been presented in a somewhat different guise in \cite{FernandezJoosten:2012:TuringProgressions}.
Note that, indeed, these theorems provide a local characterization of $\vec o(A)$ given any worm $A$: start by computing $o_0(A)$ and compute how $o_\xi (A)$ changes for increasing values of $A$. Moreover, in \cite{FernandezJoosten:2012:TuringProgressions} a characterization is given for those values $\xi$ where $o_\xi (A)$ changes value.

\subsection{From local to global}\label{section:FromLocalToGlobal}

The computations we have presented give the value of ${\formerOmega}_\xi(A)$ from ${\formerOmega}_\zeta(A)$ for $\zeta<\xi$ {\em provided $\zeta$ is large enough.} As such, we have only characterized them locally. In the next subsection we will give a global characterization of $\vec{\formerOmega}(A)$, so that all values may be computed directly from ${\formerOmega}_0(A)$.

In our local characterization we have distinguished two cases: successor coordinates and limit coordinates. 
It turns out that one can conceive both successor and limit steps as one of the same kind. For the successor case when $\zeta = \xi + 1$ we saw that

\begin{equation}\label{equation:successorCharacterization}
{\formerOmega}_\zeta(A)= {\formerOmega}_{\xi + 1} (A) = \le {\formerOmega}_{\xi} (A) = \le^{-\xi + (\xi +1)} {\formerOmega}_{\xi} (A) = \le^{-\xi + \zeta}{\formerOmega}_\xi (A).
\end{equation}

For limit steps, we say ${\formerOmega}^\xi (A) = e^{-\xi + \zeta} {\formerOmega}_\zeta(A)$ for $\xi < \zeta$ large enough. By Lemma \ref{logexp}, $\le^\alpha$ is a left-inverse to $e^\alpha$ for all $\alpha$. Then our characterization for limit coordinates becomes
\begin{equation}\label{equation:limitCharacterization}
{\formerOmega}_{\zeta} (A) =  \le^{-\xi + \zeta} {\formerOmega}_{\xi} (A) \ \ \ \ \ \mbox{for $\xi<\zeta$ large enough.}
\end{equation}

Written in this way, we see that  \eqref{equation:successorCharacterization} and \eqref{equation:limitCharacterization} actually are the same.  Moreover, as we shall see, Lemma \ref{tfae} will allow us to drop the requirement ``for $\xi<\zeta$ large enough" giving rise to our desired global characterization. Let us unify the results obtained so far by describing the sequences $\vec{\formerOmega}({A})$ using hyperexponentials and hyperlogarithms.

\subsection{A global characterization}

\begin{theorem}\label{istheexact}
If $A$ is any worm, $\vec{\formerOmega}({A})$ is the unique $\le$-exact sequence with ${\formerOmega}_0({A})=o({A})$.
\end{theorem}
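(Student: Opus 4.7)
The plan is to apply Lemma \ref{tfae}, which reduces $\le$-exactness to the local condition (3): for every ordinal $\zeta>0$ there is some $\xi<\zeta$ with $o_\zeta(A)=\le^{-\xi+\zeta}o_\xi(A)$. I would split into two cases according to whether $\zeta$ is a successor or a limit, and verify condition (3) in each case by repackaging the two local computations already proved for $\vec o(A)$.

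For the successor case $\zeta=\xi+1$, take $\xi$ itself. Then $-\xi+\zeta=1$, so the required identity $o_{\xi+1}(A)=\le^1 o_\xi(A)=\le o_\xi(A)$ is exactly the content of Theorem \ref{theorem:SuccessorRelations}. For the limit case $\zeta\in\mathsf{Lim}$, Theorem \ref{theorem3.6} produces some $\theta<\zeta$ sufficiently close to $\zeta$ such that $o_\theta(A)=e^{-\theta+\zeta}o_\zeta(A)$. Applying $\le^{-\theta+\zeta}$ to both sides and using the cancellation identity $\le^\alpha e^\alpha=\mathrm{id}$ — the diagonal instance $\xi=\zeta=-\theta+\zeta$ of Theorem \ref{cancel} — yields $\le^{-\theta+\zeta}o_\theta(A)=o_\zeta(A)$. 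Thus condition (3) of Lemma \ref{tfae} holds with witness $\xi=\theta$, so $\vec o(A)$ is $\le$-exact.

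Uniqueness is then immediate from Lemma \ref{tfae}(2): any $\le$-exact sequence $f$ satisfies $f(\zeta)=\le^\zeta f(0)$, so prescribing $f(0)=o_0(A)=o(A)$ determines the entire sequence. I do not anticipate a genuine obstacle here — the substantive work was already carried out in Theorems \ref{theorem:SuccessorRelations} and \ref{theorem3.6}, and what remains is the observation made informally in Section \ref{section:FromLocalToGlobal} that these two local descriptions are captured simultaneously by a single hyperlogarithmic equation, which is precisely the shape required by condition (3) of Lemma \ref{tfae}.
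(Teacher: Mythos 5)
Your proposal matches the paper's own proof step for step: both apply Lemma \ref{tfae}, split on successor versus limit $\zeta$, invoke Theorem \ref{theorem:SuccessorRelations} and Theorem \ref{theorem3.6} respectively, and use the cancellation from Theorem \ref{cancel} to recover the $\le$-exactness condition. The argument is correct.
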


\proof
In view of Lemma \ref{tfae}, it suffices to show that, given any ordinal $\zeta$, there is ${\x}<\zeta$ such that ${\formerOmega}_\zeta({A})=\le^{-{\x}+\zeta}{\formerOmega}_{\x}({A})$.

If $\zeta$ is a successor ordinal, write $\zeta={\x}+1$. Then, by Theorem \ref{theorem:SuccessorRelations}, we have that ${\formerOmega}_\zeta({A})=\le{\formerOmega}_{\x}({A})$.

Meanwhile, if $\zeta$ is a limit ordinal, we know from Lemma \ref{theorem3.6} that, for ${\x}<\zeta$ large enough, ${\formerOmega}_{\x}({A}) = \ex^{-\x+\y}{\formerOmega}_\zeta({A})$. Applying $\le^{-\x+\zeta}$ on both sides and using Theorem \ref{cancel}, we see that $\le^{-\x+\zeta}{\formerOmega}_{\x}({A}) = {\formerOmega}_\zeta({A}).$ Thus we can use Lemma \ref{tfae} to see that $\vec{\formerOmega}(A)$ is $\le$-exact, so that, for all $\xi$, ${\formerOmega}_\xi(A)=\le^\xi{\formerOmega}_0(A)$, as claimed.
\qed

\begin{example}
For $A = \langle\omega\cdot 2 +1\rangle\langle\omega\rangle\langle\omega\cdot 2 +1\rangle\langle 0\rangle \langle\omega^2\rangle\top$ we have that 
\[
\vec{{\formerOmega}}(A)=  \la e^{\omega^2}1 + e^\omega (e^{\omega+1}1 \cdot 2), e^\omega (e^{\omega+1}1 \cdot 2),  \ldots, e^{\omega+1}1 \cdot 2,e^{\omega+1}1, \ldots , \omega, 1, 0 \ldots \ra.
\]

\end{example}

\PreprintVersusPaper{
\begin{proof}
In virtue of Theorem \ref{istheexact} above and Example \ref{example:anExactSequence} it suffices to see that $o(A) = e^{\omega^2}1 + e^\omega (e^{\omega+1}1 \cdot 2)$.
\begin{enumerate}
\item
$o(\omega+1) = o((\omega+1)\uparrow 0) = e^{\omega+1}o(0)=e^{\omega+1}1$;
\item
$o((\omega +1) 0 (\omega+1)) = \omega^{o(1\downarrow (\omega+1))}\cdot 2 = \omega^{e^{\omega+1}1}\cdot 2 = e^{\omega+1}1\cdot 2$;
\item
$o(\, (\omega\cdot 2 +1) \omega (\omega\cdot 2 +1)\, ) = o(\,\omega\uparrow ((\omega +1) 0 (\omega+1)) \,) = e^\omega o(\, (\omega +1) 0 (\omega+1)\, ) = e^\omega (e^{\omega+1}1\cdot 2)$;
\item
$o(\omega^2) = o(\omega^2 \uparrow 0) = e^{\omega^2} o(0) = e^{\omega^2} 1$;
\item
$o(\, (\omega\cdot 2 +1) \omega (\omega\cdot 2 +1) 0 \omega^2\, ) = o(\omega^2) + o(\, (\omega\cdot 2 +1) \omega (\omega\cdot 2 +1)\, ) = e^{\omega^2}1 + e^\omega (e^{\omega+1}1 \cdot 2)$.
\end{enumerate}
\end{proof}}{\begin{proof} Theorem \ref{theorem:RecursiveHyperexponential} yields  $o(A) = e^{\omega^2}1 + e^\omega (e^{\omega+1}1 \cdot 2)$. Then, the result follows from Theorem \ref{istheexact} above and Example \ref{example:anExactSequence}.\end{proof}}

Recall that we can recast our example in terms of the more familiar Veblen functions as
\[
\vec o (A) = \la \varphi_2(1) + \varepsilon_{\varepsilon_\omega + \varepsilon_\omega}, \varepsilon_{\varepsilon_\omega + \varepsilon_\omega},  \ldots, \varepsilon_\omega + \varepsilon_\omega,  \varepsilon_\omega, \ldots , \omega, 1, 0 \ldots \ra .
\]

Hyperexponentials give us {\em lower bounds} on $\le$-exact sequences.
The value of ${\formerOmega}_\xi(A)$ fully determines the values of ${\formerOmega}_\zeta(A)$ for $\zeta > \xi$ but not vice versa. However for $\zeta > \xi$ we do have a lower-bound on ${\formerOmega}_\xi(A)$:

\begin{theorem}
Given a worm ${A}$ and ordinals ${\x},\zeta$, ${\formerOmega}_{\x}({A})\geq \ex^{\zeta}{\formerOmega}_{{\x}+\zeta}({A})$.
\end{theorem}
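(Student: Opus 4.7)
The plan is to combine two facts that are already available: (i) the consistency sequence $\vec o(A)$ is $\ell$-exact (Theorem \ref{istheexact}), so that $o_{\xi+\zeta}(A) = \ell^\zeta o_\xi(A)$, and (ii) $\ell^\zeta$ is a left-adjoint to $e^\zeta$ (Theorem \ref{logexp}). Together these reduce the claim to the purely ordinal-theoretic inequality $e^\zeta\ell^\zeta \alpha \leq \alpha$, valid for every $\alpha$.

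First, I would unpack $\ell$-exactness via Lemma \ref{tfae}: for any ordinal $\zeta$ one has $o_{\xi+\zeta}(A) = \ell^\zeta o_\xi(A)$, since this is the condition $f(\xi+\zeta)=g^\zeta f(\xi)$ with $f(\cdot)=o_{(\cdot)}(A)$ and $g=\ell$. Substituting this on the right-hand side of the desired inequality, it suffices to prove that
\[
o_\xi(A) \;\geq\; e^\zeta\bigl(\ell^\zeta o_\xi(A)\bigr).
\]

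Next I would derive the general fact $e^\zeta\ell^\zeta\alpha \leq \alpha$ directly from the definition of a left-adjoint. Being a left-adjoint of the normal function $e^\zeta$, the initial function $\ell^\zeta$ satisfies: $\alpha < e^\zeta(\beta)$ implies $\ell^\zeta(\alpha) < \beta$. Contrapositively, $\ell^\zeta(\alpha) \geq \beta$ implies $\alpha \geq e^\zeta(\beta)$. Taking $\beta = \ell^\zeta(\alpha)$ yields $\alpha \geq e^\zeta(\ell^\zeta(\alpha))$, as required.

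Applying this with $\alpha = o_\xi(A)$ and combining with the $\ell$-exactness identity, we conclude
\[
o_\xi(A) \;\geq\; e^\zeta\ell^\zeta o_\xi(A) \;=\; e^\zeta o_{\xi+\zeta}(A).
\]
There is no real obstacle here; the theorem is essentially an immediate corollary of the global characterization of $\vec o(A)$ established in Theorem \ref{istheexact} together with the adjunction property of $\ell^\zeta$ and $e^\zeta$. The only care needed is to make sure that the $\ell$-exactness, which is formulated in terms of the cohyperation acting on the left, gives exactly the substitution we need; this is transparent because $\ell^\zeta o_\xi(A) = \ell^\zeta\ell^\xi o_0(A) = \ell^{\xi+\zeta}o_0(A) = o_{\xi+\zeta}(A)$.
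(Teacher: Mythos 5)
Your proof is correct and takes essentially the same route as the paper: both combine Theorem \ref{istheexact} ($\le$-exactness of $\vec o(A)$) with the adjunction from Theorem \ref{logexp}. The paper phrases the argument as a contradiction (assuming $o_\xi(A)<e^{-\xi+\zeta}o_\zeta(A)$, applying the adjunction to get $\le^{-\xi+\zeta}o_\xi(A)<o_\zeta(A)$, and contradicting exactness), whereas you isolate the general inequality $e^\zeta\le^\zeta\alpha\leq\alpha$ and argue directly; this is only a cosmetic difference.
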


\proof
Towards a contradiction, assume that there is a worm $A$ and ordinals $\xi{<}\zeta$ such that ${\formerOmega}_\x(A){<}\ex^{{-{\x}+\zeta}}{\formerOmega}_\zeta(A)$. Then, by Theorem \ref{logexp}, $\le^{-\x+\zeta}{\formerOmega}_\x(A)<{\formerOmega}_\zeta(A)$.
But this is impossible by Theorem \ref{istheexact}, given that $\le^{-\x+\zeta}{\formerOmega}_\x(A)={\formerOmega}_\zeta(A)$.
\qed



\bibliographystyle{plain}
\bibliography{References}

\end{document}